\crefname{equation}{}{}
\crefname{Assumption}{Assumption}{Assumptions}
\crefname{algocf}{Algorithm}{Algorithms}
\newcommand{\ie}{i.e., }
\newcommand{\st}{\mathrm{s.t.} }
\newcommand{\argmin}{\mathop{\arg\min}}
\newcommand{\Ttran}{\mathsf{T}}
\newcommand{\de}{\,\mathrm{d}}
\newcommand{\Rq}{\mathsf{Rq}}
\newcommand{\defi}{:=}
\newcommand{\normml}{\left\vert\kern-0.25ex\left\vert\kern-0.25ex\left\vert}
\newcommand{\normmr}{\right\vert\kern-0.25ex\right\vert\kern-0.25ex\right\vert}
\newcommand{\gap}{\mathsf{gap}}
\newcommand{\order}{\mathcal{O}}
\newcommand{\zero}{\mathbf{0}}
\newcommand{\grad}{\mathrm{grad}}
\newcommand{\Sm}{\mathcal{S}^{n\!-\!1}}
\newcommand{\R}{\mathbb{R}}
\newcommand{\figsize}{0.7\textwidth}
\crefname{lstlisting}{Code}{Codes}
\providecommand{\spa}[1]{\mathrm{span}\{#1\}}
\providecommand{\abs}[1]{\lvert#1\rvert}
\providecommand{\norm}[1]{\lVert#1\rVert}
\providecommand{\dual}[1]{\langle#1\rangle}
\providecommand{\bigabs}[1]{\bigl\lvert#1\bigr\rvert}
\providecommand{\Bigabs}[1]{\Bigl\lvert#1\Bigr\rvert}
\providecommand{\Bigdual}[1]{\Bigl\langle#1\Bigr\rangle}
\newtheorem{theorem}{Theorem}%  meant for continuous numbers
\newtheorem{Assumption}[theorem]{Assumption}
\newtheorem{lemma}[theorem]{Lemma}
\newtheorem{corollary}[theorem]{Corollary}
\newtheorem{proposition}[theorem]{Proposition}% 
\newtheorem{remark}[theorem]{Remark}%
\newtheorem{example}[theorem]{Example}
\newtheorem{definition}[theorem]{Definition}%
\begin{document}

\title[Riemannian Acceleration with Preconditioning]{Riemannian Acceleration with Preconditioning for symmetric eigenvalue problems}

\author*[1]{\fnm{Nian} \sur{Shao}}\email{nian.shao@epfl.ch}
\author[2]{\fnm{Wenbin} \sur{Chen}}\email{wbchen@fudan.edu.cn}

\affil*[1]{\orgdiv{Institute of Mathematics}, \orgname{EPFL}, \orgaddress{\city{Lausanne}, \postcode{1015},  \country{Switzerland}}}

\affil[2]{\orgdiv{School of Mathematical Sciences and Shanghai Key Laboratory for Contemporary Applied Mathematics}, \orgname{Fudan University}, \orgaddress{\city{Shanghai}, \postcode{200433}, \country{China}}}

\abstract{
	The analysis of the acceleration behavior of gradient-based eigensolvers with preconditioning presents a substantial theoretical challenge. In this work, we present a novel framework for preconditioning on Riemannian manifolds and introduce a metric, the leading angle, to evaluate preconditioners for symmetric eigenvalue problems. We extend the locally optimal Riemannian accelerated gradient method for Riemannian convex optimization to develop the Riemannian Acceleration with Preconditioning (RAP) method for symmetric eigenvalue problems, thereby providing theoretical evidence to support its acceleration.
Our analysis of the Schwarz preconditioner for elliptic eigenvalue problems demonstrates that RAP achieves a convergence rate of $1-C\kappa^{-1/2}$, which is an improvement over the preconditioned steepest descent method's rate of $1-C\kappa^{-1}$. The exponent in $\kappa^{-1/2}$ is sharp, and numerical experiments confirm our theoretical findings.}

\keywords{Symmetric eigenvalue problems, Riemannian acceleration, preconditioning, }
\pacs[MSC Classification]{15A18, 65F08, 65F15, 65N25, 90C25}

\maketitle
\section{Introduction}
This paper is concerned with the smallest eigenvalue problem
\begin{equation*}
	Au_{1} = \lambda_{1}Mu_{1},
\end{equation*}
where $A$ and $M$ are both symmetric positive definite matrices, $\lambda_{1}$ is the smallest eigenvalue and $u_{1}$ is a corresponding $M$-unit eigenvector. Assume $\lambda_{1}$ is simple. 

\paragraph*{Large-scale symmetric eigenvalue problems}
The symmetric eigenvalue problem holds a prominent place in the realm of numerical analysis and scientific computing \cite{Bai2000,Parlett1998,Wilkinson1965,Saad2011}. 
Let
\begin{equation}
	\label{defRQ}
	\Rq(x)=\frac{x^{\Ttran}Ax}{x^{\Ttran}Mx},\quad \text{where}\quad x\neq \zero
\end{equation}
be the Rayleigh quotient. For large-scale problems, especially when the smallest eigenvalue and its corresponding eigenvector are of interest, a popular strategy \cite[Chapter~9.1.2]{Dyakonov1996} is that 
\begin{equation*}
	\lambda_{1} = \min_{x\neq \zero} \Rq(x)\quad \text{and}\quad  \Rq(u_{1})=\lambda_{1}.
\end{equation*}
In the scenario that the matrix is only available for matrix-vector multiplications, there are mainly two types of reliable and efficient numerical methods: Lanczos-type methods and gradient-type methods. 
Compared to Lanczos-type methods, gradient-type methods can utilize preconditioners efficiently and become highly effective for eigenvalue problems arising from the discretization of elliptic operators, as the rate of convergence can become independent of the mesh size when a good preconditioner is employed~\cite{Knyazev1998}.
In this paper, we mainly focus on gradient-type methods with preconditioning.
\paragraph*{Preconditioned eigensolvers}
Preconditioned eigensolvers have been studied for over four decades, with a rich history of research and development. For an in-depth exploration of the early developments in this field, we refer readers to D'yakonov's comprehensive book \cite{Dyakonov1996} and Knyazev's illuminating survey paper \cite{Knyazev1998}. 
Among the myriad of preconditioned eigensolvers, the Locally Optimal Block Preconditioned Conjugate Gradient (LOBPCG) method \cite{Knyazev2001} stands out as one of the most popular choices. However, despite its popularity, analyzing the convergence of LOBPCG remains a formidable challenge. Previous endeavors about the theoretical analysis of the LOBPCG method are mainly from two perspectives. One is the sharp estimation of the convergence rate for the Preconditioned Steepest Descent (PSD) method \cite{Knyazev2003,Neymeyr2011,Neymeyr2012,Argentati2017,Knyazev2009}. These work focuses on the analysis of the preconditioner while not addressing the previous
approximation, which is called momentum term. The other one is the understanding of the momentum term \cite{Ovtchinnikov2006,Ovtchinnikov2008}. Ovtchinnikov initiated this exploration by starting with the Jacobi orthogonal complement correction operator and provided an asymptotic convergence analysis. However, it is important to note that his work did not fully exploit the acceleration properties of LOBPCG, which has been observed by numerical results in \cite{Knyazev2001}.

\paragraph*{Convexity of symmetric eigenvalue problems}

The convexity of symmetric eigenvalue problems is a crucial issue for understanding the convergence of eigensolvers. In fact, a weakened form of this convexity, known as the Polyak--Lojasiewicz condition, is a fundamental result in some textbook, such as \cite[Theorem~5.5]{Demmel1997}. Although it is not as strong as strong convexity, it is still sufficient to guarantee linear convergence of gradient-type methods.

To establish a strong-convexity-like structure in Euclidean space, a common technique involves removing the homogeneous direction of the Rayleigh quotient. For instance, in \cite{Shao2023a}, Shao, Chen and Bai proposed an implicit convexity structure by minimizing the Rayleigh quotient in the tangent plane of an eigenvector approximation. This approach led to the development of a provable accelerated eigensolver based on Preconditioning and Implicit Convexity (EPIC). Another example of creating a convexity structure in Euclidean space is found in \cite{Li2019}, where a low-rank approximation problem is considered as a replacement for Rayleigh quotient optimization.

In contrast, a more natural approach to developing a convexity structure for eigenvalue problems involves working on Riemannian manifolds as 
\begin{equation}
	\label{rmoptWOP}
	\begin{aligned}
		\min\quad & x^{\Ttran}Ax    \\
		\st\quad  & x^{\Ttran}Mx=1.
	\end{aligned}
\end{equation}
There are several results concerning the geodesic convexity of the symmetric eigenvalue problem, arising from its interpretation as an optimization problem with orthogonality constraints \cite{Absil2009,Edelman1998,Jiang2015,Dai2020}. One such example is a systematic study of geodesic convexity for symmetric eigenvalue problems by Alimisis and Vandereycken in \cite{Alimisis2022}.

\paragraph*{Acceleration on Riemannian optimization}
The gradient method, with its roots tracing back to Euler and Lagrange, stands as one of the earliest approaches for convex optimization problems. Forty years ago, Nesterov \cite{Nesterov1983} proposed an Accelerated Gradient (NAG) method, which has attracted significant practical and theoretical interest, as seen in works such as \cite{Beck2009, Becker2011, Nesterov2005, Odonoghue2015}. In 2014, Su, Boyd, and Candes proposed a dynamical system analogy for the NAG method in \cite{Su2014}. This novel perspective catalyzed extensive exploration of the NAG method over the past decade \cite{Shi2019, Shi2021, Muehlebach2019, Muehlebach2021, Luo2021, Park2021}. 

Turning to Riemannian optimization \cite{Absil2009, Boumal2023}, the body of work in this domain is comparatively less extensive. The earliest research involving accelerated methods on Riemannian manifolds can be traced back to \cite{Liu2017}. Subsequently, Zhang and Sra presented the first computationally tractable Riemannian acceleration method in \cite{Zhang2018}, analyzing its convergence by extending the estimate sequence technique from traditional NAG flow analysis. Recently, several Riemannian accelerated gradient methods have been proposed for Riemannian optimization problems, including \cite{Ahn2020, Alimisis2020, Duruisseaux2022, Kim2022, Alimisis2021a, MartinezRubio2022}.

\paragraph*{Paper organization}
In Section~2, we explore the geodesic convexity of preconditioned symmetric eigenvalue problems. Beginning with a continuous analogy, we introduce a new formulation for incorporating preconditioning on Riemannian manifolds, which differs from the classical approach presented by Mishra and Sepulchre in \cite{Mishra2016}. Unlike the analysis of eigensystems without preconditioning, preconditioned eigenvalue problems present unique theoretical challenges. To address these challenges, we develop new metrics for evaluating preconditioners, such as the leading angle. Finally, we provide estimates of local geodesic convexity and Lipschitz smoothness for symmetric eigenvalue problems under preconditioning.

In Section~3, we propose a Locally Optimal Riemannian Accelerated Gradient (LORAG) method for general Riemannian convex optimization problems, assuming only the local geodesic convexity and Lipschitz smoothness, and prove the acceleration.

Drawing from the theoretical foundations laid in Section~2 and the algorithmic insights from Section~3, we propose the Riemannian Acceleration with Preconditioning (RAP) method for preconditioned eigensystems in Section~4. Practical implementation aspects are also discussed. For a theoretical viewpoint, when the initial vector $x_{0}$ is sufficiently close to an eigenvector $u_{1}$, the rate of convergence can be expressed as:
\begin{equation}
	\label{introrate}
	\Rq(x_{m})-\lambda_{1}\leq 2\Bigl(1-\frac{1}{2\sqrt{\kappa_{B}}}\Bigr)^{m}\bigl(\Rq(x_{0})-\lambda_{1}\bigr),
\end{equation}
where $\Rq(x)$ is the Rayleigh quotient, $\lambda_{1}$ is the smallest eigenvalue and
\begin{equation*}
	\kappa_{B} \approx \frac{\nu_{\max}}{\nu_{\min}} \frac{1-\lambda_{1}/\lambda_{n}}{1-\lambda_{1}/\lambda_{2}},
\end{equation*}
with $\nu_{\min}$ and $\nu_{\max}$ representing the smallest and largest eigenvalues of $B^{-1}A$, respectively. Here, $B$ serves as a preconditioner for $A$, while $\lambda_{2}$ and $\lambda_{n}$ are the second-smallest and the largest eigenvalues of $M^{-1}A$, respectively. The notation ``$\approx$'' indicates that higher-order terms are ignored, as will be discussed later in \cref{rmkPEVP}. 
Compared with the sharp estimation of PSD method \cite{Argentati2017} as 
\begin{equation*}
	\frac{\Rq(x_{m+1})-\lambda_{1}}{\lambda_{2}-\Rq(x_{m+1})}\leq \Bigl(1-\frac{2(1-\lambda_{1}/\lambda_{2})}{\nu_{\max}/\nu_{\min}+1}\Bigr)^{2}\frac{\Rq(x_{m})-\lambda_{1}}{\lambda_{2}-\Rq(x_{m})},
\end{equation*}
RAP improves the exponent of the dominant term $\frac{1-\lambda_{1}/\lambda_{2}}{\nu_{\max}/\nu_{\min}+1}$, which is close to $0$ for challenging problems, from $1$ to $1/2$ when ignoring higher-order terms. Moreover, the convergence rate \cref{introrate} is essentially same as the conjecture for LOPCG in \cite[(5.5)]{Knyazev2001}, whose sharpness of exponent is verified in numerical experiments in \cite{Knyazev2001}.

To facilitate a deeper comprehension of our theoretical findings, in Section~5, we explore a specific example, \ie solving elliptic eigenvalue problems with Schwarz preconditioners.
When the overlapping or non-overlapping domain decomposition method is applied, with sufficiently small coarse mesh size $H$, the rate of convergence is established as
\begin{equation*}
	\Rq(x_{m})-\lambda_{1}\leq 2\biggl(1-C_{\text{RAP}}\Bigl(\frac{1-\lambda_{1}/\lambda_{2}}{\nu_{\max}/\nu_{\min}}\Bigr)^{1/2}\biggr)^{m}\bigl(\Rq(x_{0})-\lambda_{1}\bigr),
\end{equation*}
where $C_{\text{RAP}}$ is a constant independent of the coarse and fine mesh sizes and the eigenvalues gaps. Compared with the rate of convergence for the PSD method as
\begin{equation*}
	\Rq(x_{m})-\lambda_{1}\leq \Bigl(1-C_{\text{PSD}}\frac{1-\lambda_{1}/\lambda_{2}}{\nu_{\max}/\nu_{\min}}\Bigr)\bigl(\Rq(x_{m-1})-\lambda_{1}\bigr),
\end{equation*}
RAP demonstrates a clear acceleration advantage in the exponent of $\frac{1-\lambda_{1}/\lambda_{2}}{\nu_{\max}/\nu_{\min}}$.

Numerical experiments are presented in Section~6, offering illustrations of the effectiveness and efficiency of acceleration and preconditioning techniques.

For the self-containess, we briefly review some concepts in Riemannian manifolds related to our analysis, and give some examples on sphere in \cref{appRO}. For more details, we refer to the literatures about Riemannian manifolds in \cite{Lee2018,Klingenberg1995} and about Riemannian optimization in \cite{Boumal2023,Absil2009}. Moreover, we defer some technical proofs to \cref{appTS} for better readability.
\paragraph*{Notations}
For a symmetric positive definite matrix $A$, the notations $\dual{\cdot,\cdot}_{A}$ and $\norm{\cdot}_{A}$ represent the corresponding inner-product and norm, respectively. For standard Euclidean inner product and norm, the subscripts are omitted. We use $\Sm\defi \{x\in\R^{n}\mid x^{\Ttran}x=1\}$ to denote the standard sphere and $T_{x}\Sm\defi \{v\in\R^{n}\mid v^{\Ttran}x=0,\, x\in\Sm\}$ to denote the tangent space of $x$.

\section{Geodesic convexity of preconditioned eigensystems}
	Given a symmetric positive definite preconditioner $B$, the PSD method \cite{Samokish1958,Neymeyr2012} is
\begin{equation}
	\label{psd}
	x_{k+1} = x_{k}-\alpha_{k}B^{-1}\nabla \Rq(x_{k}),
\end{equation}
where $\Rq$ is the Rayleigh quotient defined in \cref{defRQ} and $\alpha_{k}>0$ is a step-size. In this section, we study the PSD method on Riemannian manifolds.

\subsection{Incorporate preconditioning on Riemannian manifolds}
Consider the following Riemannian optimization problem:
\begin{equation}
	\label{rmopt}
	\begin{aligned}
		\min\quad & \Rq(x)            \\
		\st\quad  & x^{\Ttran}Bx=1.
	\end{aligned}
\end{equation}
The following lemma forms a connection between the continuous analogy of the PSD method \cref{psd} and the Riemannian optimization problem \cref{rmopt}.
\begin{lemma}
	\label{conservationLaw}
	Consider the following preconditioned gradient flow:
	\begin{equation}
		\label{conflow}
		\left\{
		\begin{aligned}
			\frac{\de x}{\de t} & = -B^{-1}\nabla \Rq(x), \\
			x(0)                & =x_{0}.
		\end{aligned}
		\right.
	\end{equation}
	The gradient flow $x(t)$ from \cref{conflow} lies on a $B$-sphere for all $t>0$, \ie $\norm{x(t)}_{B}=\norm{x_{0}}_{B}$. 
\end{lemma}
\begin{proof}
	From the definition of the Rayleigh quotient $\Rq(x)$, we know
	\begin{equation}
		\label{gradf}
		\nabla \Rq(x) = \frac{2}{x^{\Ttran}Mx}\bigl(Ax-\Rq(x)Mx\bigr).
	\end{equation}
	Then a conservation law of $\norm{x(t)}_{B}$ is obtained as 
	\begin{equation*}
		\frac{\de \norm{x}_{B}^{2}}{\de t} = 2\Bigdual{\frac{\de x}{\de t},x}_{B} = -2\dual{\nabla \Rq(x),x}=0,
	\end{equation*}
	where the orthogonality of $\nabla \Rq(x)$ and $x$ as $\dual{\nabla \Rq(x),x}=0$ is used.
\end{proof}

The concept discussed in \cref{conservationLaw} is relatively straightforward and  illustrates how the choice of a preconditioner affects gradient flow. Due to the homogeneity property of the Rayleigh quotient, $\Rq(x) = \Rq(\alpha x)$ for any nonzero scalar $\alpha$, the minimum value of $\Rq(x)$ on any $B$-sphere is always $\lambda_{1}$. However, selecting an appropriate preconditioner $B$ can speed up the convergence process, as demonstrated in \cite{Knyazev1998}. 

As a numerical example in \cref{sec:numexp} shows, using an effective preconditioner, such as a two-level overlapping domain decomposition preconditioner, can significantly reduce the number of iterations required to solve Laplacian eigenvalue problems, bringing it down to a near-constant level.
This can be interpreted as the preconditioner guiding the trajectory to converge more quickly.

This observation also highlights a link between preconditioning and Riemannian optimization, particularly through the imposition of constraints that ensure the iterates remain on a $B$-sphere. The preconditioned eigensystem is mathematically represented in \cref{rmopt}. Compared to the formulation in \cref{rmoptWOP}, \cref{rmopt} appears slightly more expensive, as the denominator of Rayleigh quotient is not eliminated by the manifold constraints. However, this trade-off results in faster convergence by operating within a more favorable sphere.

The remainder of this section provides a mathematical explanation for this phenomenon by examining the condition number $\kappa = L/\mu$, which is the ratio of the Lipschitz smoothness parameter $L$ to the geodesic convexity parameter $\mu$ of the Rayleigh quotient on different spheres\footnote{See \cref{defGCF,defLS} in \cref{appRO}.}. As shown in \cite[Theorem 11.29]{Boumal2023}, the condition number significantly influences the convergence rate of the Riemannian gradient descent method, which is $1 - \kappa^{-1}$.

\subsection{Analysis on $M$-sphere: without preconditioning}

Let us first examine eigensystems without preconditioning as in \cref{rmoptWOP}. The geodesic convexity and Lipschitz smoothness of the Riemannian optimization problem \cref{rmoptWOP} have been established in previous works \cite{Alimisis2022}. However, our analysis differs slightly, as we define the convex region through the Rayleigh quotient in \cref{lemcvxset}.

For simplicity of our discussion, we first use some transformation to transform $M$-sphere to a standard sphere. For any $z^{\Ttran}Mz=1$, let $x=M^{1/2}z$. Then we know that
\begin{equation*}
	x^{\Ttran}x=z^{\Ttran}Mz=1\quad\text{and}\quad x^{\Ttran}A_{M}x=z^{\Ttran}Az,
\end{equation*}
where $A_{M}=M^{-1/2}AM^{-1/2}$. In this part, we consider
\begin{equation}
	\label{stdopt}
	\min_{x^{\Ttran}x=1}\quad \Rq_{M}(x),\quad \text{where}\quad \Rq_{M}(x)\defi x^{\Ttran}A_{M}x.
\end{equation}
A direct result from the transformation is that $\{\lambda_{i}\}_{i=1}^{n}$ are eigenvalues of $A_{M}$, and $u_{M}=\frac{M^{1/2}u_{1}}{\norm{u_{1}}_{M}}$ is a unit eigenvector corresponding to $\lambda_{1}$.
Now let us define a geodesically convex set\footnote{See \cref{defGCS} in \cref{appRO}.} $\mathcal{X}_{M}$, a neighborhood of $u_{M}$ measured by $\Rq_{M}$, for computation. 
\begin{lemma}
	\label{lemcvxset}
	Suppose $\lambda_{1}\leq\rho< (\lambda_{1}+\lambda_{2})/2$, then the set
	\begin{equation*}
		\mathcal{X}_{M} = \{x\in\Sm,\,x^{\Ttran}u_{M}>0\mid x^{\Ttran}A_{M}x\leq\rho\}
	\end{equation*}
	is geodesically convex.
\end{lemma}
\begin{proof}
	See \cref{applemcvxset}.
\end{proof}

Similar to \cite[Corollary~19]{Alimisis2022}, we can establish the geodesic convexity and Lipschitz smoothness of the optimization problem \cref{stdopt}.
\begin{theorem}
	\label{lemcvxfun}
	Suppose $\lambda_{1}\leq\rho<(\lambda_{1}+\lambda_{2})/2$, let $\Rq_{M}(x)=x^{\Ttran}A_{M}x$, then for any $x\in\mathcal{X}_{M}$ and $v\in T_{x}\Sm$, the following inequality holds:
	\begin{equation*}
		2(\lambda_{1}+\lambda_{2}-2\rho)\norm{v}^{2} \leq
		\dual{v,\nabla^{2}\Rq_{M}(x)[v]}\leq
		2(\lambda_{n}-\lambda_{1})\norm{v}^{2}.
	\end{equation*}
	These inequalities imply $\Rq_{M}$ is $2(\lambda_{1}+\lambda_{2}-2\rho)$-geodesically convex and $2(\lambda_{n}-\lambda_{1})$-Lipschitz smooth.
\end{theorem}
\begin{proof}
	Without loss of generality, assume $\norm{v}=1$, we know\footnote{Details are provided in \cref{appHess} in \cref{appRO}.}
	\begin{equation*}
		\dual{v,\nabla^{2}\Rq_{M}(x)[v]} = 2v^{\Ttran}\bigl(A_{M}-\Rq_{M}(x)I\bigr)v=2\Rq_{M}(v)-2\Rq_{M}(x).
	\end{equation*}
	With the following bound of $\Rq_{M}(v)$ in \cite[Lemma~3.1]{Notay2002}:
	\begin{equation}
		\label{estRqMv}
		\lambda_{1}+\lambda_{2}-\Rq_{M}(x)\leq \Rq_{M}(v)\leq \lambda_{n},
	\end{equation}
	and $\lambda_{1}\leq\Rq_{M}(x)\leq\rho<(\lambda_{1}+\lambda_{2})/2$,	we prove the desired result.
\end{proof}
\begin{corollary}
	\label{rmkEVP}
	The first-order approximation for the condition number of \cref{stdopt} is
	\begin{equation*}
		\kappa=\frac{2(\lambda_{n}-\lambda_{1})}{2(\lambda_{1}+\lambda_{2}-2\rho)}=\frac{\lambda_{n}}{\lambda_{2}}\frac{1-\lambda_{1}/\lambda_{n}}{1-\lambda_{1}/\lambda_{2}}\Bigl(1+\order\bigl(\frac{\rho-\lambda_{1}}{\lambda_{2}-\lambda_{1}}\bigr)\Bigr).
	\end{equation*}
	For elliptic eigenvalue problems, the condition number tends to hinder the performance of first-order methods, primarily due to the small relative spectral gap $\frac{\lambda_{2}-\lambda_{1}}{\lambda_{n}-\lambda_{1}}$.
\end{corollary}

\subsection{Challenge in analysis on $B$-sphere}
In order to control the condition number discussed in \cref{rmkEVP}, we need to study the preconditioned eigensystem \cref{rmopt}. Let
\begin{equation}
	\label{transformB}
	A_{B} = B^{-1/2}AB^{-1/2}\quad\text{and}\quad M_{B}=B^{-1/2}MB^{-1/2},
\end{equation}
the optimization problem \cref{rmopt} becomes
\begin{equation}
	\label{preconOpt}
	\min_{x\in\Sm}\Rq_{B}(x) ,\quad \text{where}\quad\Rq_{B}(x)\defi \frac{x^{\Ttran}A_{B}x}{x^{\Ttran}M_{B}x},
\end{equation}
and the eigenvalues of $(A_{B},M_{B})$ are $\{\lambda_{i}\}_{i=1}^{n}$, and $u_{B}=\frac{B^{1/2}u_{1}}{\norm{u_{1}}_{B}}$ is a unit eigenvector corresponding to $\lambda_{1}$.
Similar to \cref{lemcvxset}, which defines a geodesically convex set $\mathcal{X}_{M}$, another geodesically convex set $\mathcal{X}_{B}$ needs to be defined.
\begin{lemma}
	\label{lemcvxsetP}
	Suppose $\lambda_{1}\leq\rho< (\lambda_{1}+\lambda_{2})/2$, then the set
	\begin{equation*}
		\mathcal{X}_{B} = \{x\in\Sm,\, x^{\Ttran}u_{B}>0\mid \frac{x^{\Ttran}A_{B}x}{x^{\Ttran}M_{B}x}\leq\rho\}
	\end{equation*}
	is geodesically convex.
\end{lemma}
\begin{proof}
	See \cref{applemcvxsetP}
\end{proof}

Mimicking \cref{lemcvxfun}, we now discuss the Riemannian Hessian of $\Rq_{B}$ in $\mathcal{X}_{B}$. Unlike the previous case, preconditioning complicates the formulation of the Hessian. For any $x\in\mathcal{X}_{B}$ and $v\in T_{x}\Sm$ with $\norm{v}=1$, the Hessian can be computed directly as follows\footnote{Details are provided in \cref{appHess} in \cref{appRO}.}:
\begin{equation}
	\label{Hess}
	\dual{v,\nabla^{2}\Rq_{B}(x)[v]} = J_{1}-J_{2},
\end{equation}
where
\begin{equation*}
	J_{1}  = \frac{2\norm{v}_{A_{B}}^{2}\Rq_{B}(x)}{\norm{x}_{A_{B}}^{2}}\Bigl(1-\frac{\Rq_{B}(x)}{\Rq_{B}(v)}\Bigr)\text{ and }
	J_{2}  = \frac{8(v^{\Ttran}M_{B}x)}{\norm{x}_{M_{B}}^{4}}\Bigl(v^{\Ttran}\bigl(A_{B}x-\Rq_{B}(x)M_{B}x\bigr)\Bigr).
\end{equation*}
Even for the case $x=u_{B}$, the desired eigenvector of $(A_{B},M_{B})$, the cross term becomes $J_{2}=\zero$, but a lower bound of
\begin{equation*}
	\dual{v,\nabla^{2}\Rq_{B}(u_{B})[v]} = \frac{2\lambda_{1}\norm{v}_{A_{B}}^{2}}{\norm{u_{B}}_{A_{B}}^{2}}\Bigl(1-\frac{\lambda_{1}}{\Rq_{B}(v)}\Bigr)
\end{equation*}
is hard to analyze as it is difficult to determine a lower bound for $\Rq_{B}(v)$ in a manner analogous to that described in \cref{estRqMv}. The primary challenge arises from the fact that $v$ is orthogonal to $u_{B}$ in standard inner-product rather than in the $A_{B}$ (or $M_{B}$) inner-product. In examining the original problem \cref{rmopt} prior to the transformation \cref{transformB}, the crux of the issue lies in the dearth of information regarding the behavior of $\Rq(v)$ for $v$ in the $B$-orthogonal complement of $u_{1}$.

In the ideal, albeit unrealistic, scenario where the preconditioner is set to $B=A$, it is evident that $\Rq(v)\geq \lambda_{2}$ as $v$ is $A$-orthogonal to the eigenvector $u_{1}$. In light of the aforementioned considerations, a natural question arises with regard to a general preconditioner $B$. The question thus arises as to the extent of the $A$-orthogonality of $u_{1}$ and the $B$-orthogonal complement of $u_{1}$. Furthermore, if we consider $x$ in a neighborhood of $u_{1}$, can any $A$-orthogonal conditions between $x$ and the $B$-orthogonal complement of $x$ still be identified?
In order to address these questions, a new measure for preconditioned eigensystems, called leading angle, is introduced.

\subsection{Leading angle}

\begin{definition}[Leading Angle]
	\label{defvartheta}
	Suppose $(A,M)$ is a symmetric positive definite matrix pencil with eigenvalues $0<\lambda_{1}<\lambda_{2}\leq\dotsb\leq \lambda_{n}$, and $B$ is a symmetric positive definite matrix. Let $\Rq(x)$ be the Rayleigh quotient defined in \cref{defRQ} and $\lambda_{1}\leq\rho< \lambda_{2}$ be a scalar. Define the leading angle $\vartheta$ as
	\begin{equation*} 	
		\vartheta(B,\rho;\Rq) = \inf_{\Rq(x)\leq\rho}
                \inf_{v^{\Ttran}Bx=0} 
                \arccos\Bigl(\frac{\abs{v^{\Ttran}Ax}}{\norm{v}_{A}\norm{x}_{A}}\Bigr).
	\end{equation*}
	We denote $\vartheta = \vartheta(B,\rho;\Rq)$
	for the simplicity of notations.
\end{definition}

The leading angle $\vartheta$ directly addresses the question posed above. As shown in \cref{figLA}, it quantifies the $A$-angle between $x$ and $v$, where $x$ is near the desired eigenvector $u_{1}$ and $v$ lies in the $B$-orthogonal complement of $x$. A direct result is that 
\begin{equation*}
	\frac{\abs{u_{1}^{\Ttran}Av}}{\norm{u_{1}}_{A}\norm{v}_{A}}\leq \cos\vartheta\quad \text{for any } v^{\Ttran}Bu_{1}=0.
\end{equation*}

\begin{figure}[htbp]
	\centering
	\includegraphics[width=\figsize]{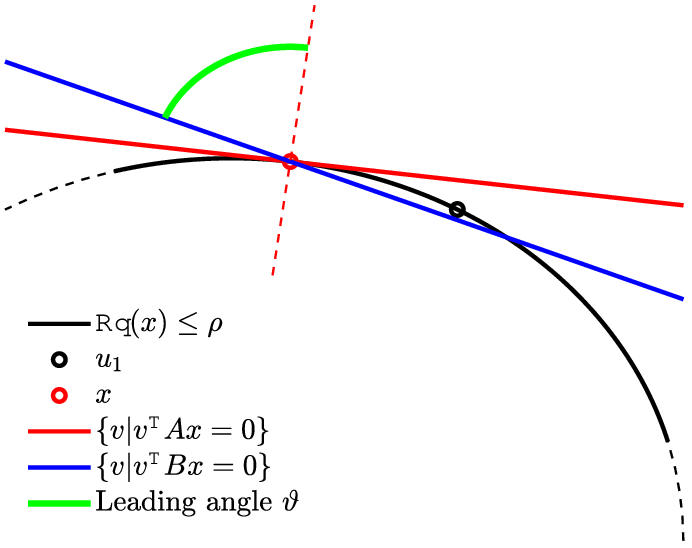}
	\caption{Leading angle $\vartheta$. The figure is under $A$ inner-product.}
	\label{figLA}
\end{figure}

Ideally, $\vartheta$ is close to $\pi/2$, or mathematical equivalently, $\cos\vartheta$ should approach zero. The following lemma establishes an equivalence between $\vartheta=\pi/2$ and $B=\mu A$ for some $\mu>0$.
\begin{lemma}
	\label{eqvAB}
	With notations in \cref{defvartheta}, suppose $\lambda_{1}<\rho<\lambda_{2}$ then $\vartheta=\pi/2$ if and only if $B$ is a multiple of $A$, \ie $A=\mu B$ for some $\mu>0$.
\end{lemma}
\begin{proof}
	See \cref{appeqvAB}.
\end{proof}
Besides the leading angle defined in \cref{defvartheta}, a traditional measurement for the quality of preconditioners is $\kappa_{\nu} = \nu_{\max}/\nu_{\min}$, where
\begin{equation}
	\label{defnu}
	\begin{aligned}
		\nu_{\min} &=\lambda_{\min}(B^{-1}A)=\min_{x\in\R^{n}}\frac{x^{\Ttran}Ax}{x^{\Ttran}Bx} = \min_{x\in\R^{n}}\frac{x^{\Ttran}AB^{-1}Ax}{x^{\Ttran}Ax}=\min_{x\in\R^{n}}\frac{x^{\Ttran}Bx}{x^{\Ttran}BA^{-1}Bx},\\ 
		\nu_{\max} &=\lambda_{\max}(B^{-1}A)=\max_{x\in\R^{n}}\frac{x^{\Ttran}Ax}{x^{\Ttran}Bx} = \max_{x\in\R^{n}}\frac{x^{\Ttran}AB^{-1}Ax}{x^{\Ttran}Ax}=\max_{x\in\R^{n}}\frac{x^{\Ttran}Bx}{x^{\Ttran}BA^{-1}Bx}.
	\end{aligned}
\end{equation}

In comparison to $\kappa_{\nu}$, the approximation properties of the neighborhood of $u_{1}$ are of greater concern when considering the the leading angle $\vartheta$. It is crucial to acknowledge that the prerequisites for preconditioners in solving linear systems diverge from those in eigenvalue problems. In the context of linear systems of the form $Ax = b$, an effective preconditioner must demonstrate robust performance across a range of right-hand sides $b$. This necessitates a condition number $\kappa_{\nu}$, defined as the condition number of $B^{-1}A$, which is typically small to ensure overall efficiency. 
In contrast, when dealing with the smallest eigenvalue problem, $Au_{1} = \lambda_{1} Mu_{1}$, the right-hand side is confined to the eigenspace spanned by $Mu_1$. Therefore, it is crucial to assess the behavior of $B^{-1}A$ specifically within a neighborhood of this eigenspace.

While the traditional measurement $\kappa_{\nu}$ does not accurately reflect the quality of $B$ within a neighborhood of the eigenspace, it still provides a global bound for $\vartheta$.
\begin{lemma}
	\label{varthetabdd}
	With notations in \cref{defvartheta}, it holds that 
	\begin{equation*}
		\cos\vartheta \leq \sqrt{1-\kappa_{\nu}^{-1}}.
	\end{equation*}
\end{lemma}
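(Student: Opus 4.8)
The plan is to unravel the definition of the leading angle $\vartheta$ and reduce the inequality $\cos\vartheta\le\sqrt{\kappa_\nu-1}$ to a statement about the worst-case ratio $\abs{v^{\Ttran}Ax}/(\norm{v}_A\norm{x}_A)$ over $B$-orthogonal pairs, which can be controlled purely by $\kappa_\nu=\nu_{\max}/\nu_{\min}$. Fix any admissible pair $(v,x)$ with $v^{\Ttran}Bx=0$ and $f(x)\le\rho_*$; we must bound $\abs{v^{\Ttran}Ax}$ from above in terms of $\norm{v}_A\norm{x}_A$. First I would decompose $A$-inner products using $B$ as a bridge: since $v^{\Ttran}Bx=0$, write $v^{\Ttran}Ax=v^{\Ttran}(A-\mu B)x$ for any scalar $\mu$, and more usefully observe that $v^{\Ttran}Ax = v^{\Ttran}A^{1/2}\,(A^{1/2}B^{-1}A^{1/2}-I)\,A^{1/2}x$ is \emph{not} quite right unless we are careful — so instead the cleaner route is to work with the substitution $w=A^{1/2}v$, $z=A^{1/2}x$, and the SPD matrix $P=A^{1/2}B^{-1}A^{1/2}$, whose eigenvalues are exactly those of $(A,B)$, hence lie in $[\nu_{\min},\nu_{\max}]$. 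The constraint $v^{\Ttran}Bx=0$ reads $w^{\Ttran}P^{-1}z=0$, and the quantity to bound is $\abs{w^{\Ttran}z}/(\norm w\,\norm z)$.

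The heart of the argument is then the following linear-algebra fact: if $P$ is SPD with spectrum in $[\nu_{\min},\nu_{\max}]$ and $w^{\Ttran}P^{-1}z=0$, then $\abs{w^{\Ttran}z}\le \sqrt{\kappa_\nu-1}\,\norm w\,\norm z$ where $\kappa_\nu=\nu_{\max}/\nu_{\min}$. To prove this I would normalize $\norm w=\norm z=1$, diagonalize $P$, and reduce to a two-dimensional extremal problem by a standard compactness/Lagrange argument — the extremizer lives in the span of at most two eigenvectors of $P$, corresponding to the extreme eigenvalues $\nu_{\min},\nu_{\max}$. In that $2\times 2$ situation, parametrize $w=(\cos\alpha,\sin\alpha)$, $z=(\cos\beta,\sin\beta)$ with $P=\diagm{\nu_{\min},\nu_{\max}}$; the orthogonality $w^{\Ttran}P^{-1}z=0$ becomes $\nu_{\min}^{-1}\cos\alpha\cos\beta+\nu_{\max}^{-1}\sin\alpha\sin\beta=0$, i.e. $\tan\alpha\tan\beta=-\nu_{\max}/\nu_{\min}=-\kappa_\nu$, and $w^{\Ttran}z=\cos\alpha\cos\beta+\sin\alpha\sin\beta=\cos\alpha\cos\beta(1+\tan\alpha\tan\beta)=\cos\alpha\cos\beta(1-\kappa_\nu)$. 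A short optimization over $\alpha$ (with $\beta$ determined by the constraint) then yields $\abs{w^{\Ttran}z}\le\sqrt{\kappa_\nu-1}$; the maximizing configuration is the symmetric one where $\cos^2\alpha=\kappa_\nu/(1+\kappa_\nu)$ and $\cos^2\beta=1/(1+\kappa_\nu)$, giving $\abs{\cos\alpha\cos\beta}=\sqrt{\kappa_\nu}/(1+\kappa_\nu)$ and hence $\abs{w^{\Ttran}z}=(\kappa_\nu-1)\sqrt{\kappa_\nu}/(1+\kappa_\nu)\le\sqrt{\kappa_\nu-1}$, the last step being elementary since $(\kappa_\nu-1)\kappa_\nu\le(1+\kappa_\nu)^2$ for $\kappa_\nu\ge1$.

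Putting the pieces together: for every admissible $(v,x)$ we get $\abs{v^{\Ttran}Ax}/(\norm v_A\norm x_A)\le\sqrt{\kappa_\nu-1}$, and taking the infimum of the $\arccos$ — equivalently the supremum of the cosine — over all such pairs preserves the bound, so $\cos\vartheta\le\sqrt{\kappa_\nu-1}$. Note the constraint $f(x)\le\rho_*$ is only used to guarantee the feasible set is nonempty; it makes the infimum no smaller than over all $x$, so the bound is not even tight, which is fine. I expect the main obstacle to be the reduction of the $n$-dimensional extremal problem to the $2$-dimensional one: one must argue carefully that at a maximizer of $\abs{w^{\Ttran}z}$ subject to $\norm w=\norm z=1$ and $w^{\Ttran}P^{-1}z=0$, the vectors $w$ and $z$ are supported on a common two-dimensional coordinate subspace indexed by two eigenvalues of $P$, and then that pushing those two eigenvalues to $\nu_{\min}$ and $\nu_{\max}$ only increases the objective. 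This can be done either via a direct Lagrange-multiplier computation (the stationarity conditions force $w$ and $z$ to be linear combinations of $z$, $P^{-1}z$ and $w$, $P^{-1}w$ respectively, collapsing the support) or by a slick two-eigenvalue averaging/rotation argument; everything else is routine trigonometry.
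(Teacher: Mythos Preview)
Your approach is correct in outline but takes a genuinely different route from the paper. The paper's proof is a two-line Cauchy--Schwarz trick: since $v^{\Ttran}Bx=0$, one has $\abs{x^{\Ttran}Av}=\min_{\alpha}\abs{x^{\Ttran}(A-\alpha B)v}\le\norm{v}_{B}\min_{\alpha}\norm{B^{-1}(A-\alpha B)x}_{B}$, and the minimum on the right is computed explicitly as $\bigl(x^{\Ttran}AB^{-1}Ax-(x^{\Ttran}Ax)^{2}/x^{\Ttran}Bx\bigr)^{1/2}\le\sqrt{\nu_{\max}-\nu_{\min}}\,\norm{x}_{A}$ using only that the pencil $(AB^{-1}A,A)$ shares the spectrum of $(A,B)$. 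No extremal analysis, no reduction to two dimensions. Your route --- the substitution $P=A^{1/2}B^{-1}A^{1/2}$ and a full constrained optimization of $\abs{w^{\Ttran}z}$ subject to $w^{\Ttran}P^{-1}z=0$ --- is essentially rediscovering Wielandt's inequality, and if carried out correctly actually yields the \emph{sharper} bound $\cos\vartheta\le(\kappa_{\nu}-1)/(\kappa_{\nu}+1)$, from which $\sqrt{\kappa_{\nu}-1}$ follows trivially. So your method buys a better constant at the cost of the dimension-reduction lemma you flag as the main obstacle; the paper's method is shorter and needs no such lemma. One small slip: your stated maximizer $\cos^{2}\alpha=\kappa_{\nu}/(1+\kappa_{\nu})$, $\cos^{2}\beta=1/(1+\kappa_{\nu})$ does not satisfy the constraint $\tan\alpha\tan\beta=-\kappa_{\nu}$; the actual optimum has $\cos^{2}\alpha=\cos^{2}\beta=1/(1+\kappa_{\nu})$, giving $\abs{w^{\Ttran}z}=(\kappa_{\nu}-1)/(1+\kappa_{\nu})$ rather than $(\kappa_{\nu}-1)\sqrt{\kappa_{\nu}}/(1+\kappa_{\nu})$. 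This does not affect the final inequality.
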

\begin{proof}
	For any $v^{\Ttran}Bx=0$, by the Cauchy--Schwarz inequality, we have
	\begin{equation*}
		\begin{aligned}
			\frac{\abs{x^{\Ttran}Av}}{\norm{x}_{A}\norm{v}_{A}} &=\min_{\alpha\in\R}\frac{\bigabs{\alpha x^{\Ttran}Bv+x^{\Ttran}(A-\alpha B)v}}{\norm{x}_{A}\norm{v}_{A}}=\min_{\alpha\in\R}\frac{\bigabs{x^{\Ttran}(A-\alpha B)v}}{\norm{x}_{A}\norm{v}_{A}}   \\
			&\leq \min_{\alpha\in\R} \frac{\norm{A^{-1}(A-\alpha B)x}_{A}\norm{v}_{A}}{\norm{x}_{A}\norm{v}_{A}}
			=\min_{\alpha\in\R} \frac{\norm{(I-\alpha A^{-1}B)x}_{A}}{\norm{x}_{A}}\\ 
			&= \frac{\Bigl(x^{\Ttran}Ax-\frac{(x^{\Ttran}Bx)^{2}}{x^{\Ttran}BA^{-1}Bx}\Bigr)^{1/2}}{\norm{x}_{A}}\leq \sqrt{1-\kappa_{\nu}^{-1}},
		\end{aligned}
	\end{equation*}
	where we use \cref{defnu} in the last inequality.
\end{proof}

In order to further investigate the local property of $\cos\vartheta$, we give a decomposition.

\begin{theorem}
	\label{thmvartheta}
	Suppose that $\lambda_{1}\leq\rho< (\lambda_{1}+\lambda_{2})/2$, and let 
	\begin{equation}
		\label{defveps}
		\varepsilon=\frac{\lambda_{1}^{-1}-\rho^{-1}}{\lambda_{1}^{-1}-\lambda_{2}^{-1}}.
	\end{equation}
	The leading angle $\vartheta$ defined in \cref{defvartheta} satisfies
	\begin{equation}
		\label{estLA}
		\begin{aligned}
			\cos\vartheta &\leq \frac{\norm{B^{-1}Au_{1}-\nu_{1} u_{1}}_{A}}{\sqrt{\lambda_{1}}\nu_{\min}}+\Bigl(\kappa_{\nu}-\frac{2\nu_{1}}{\nu_{\min}}+\frac{\nu_{1}^{2}}{\nu_{\min}^{2}}\Bigr)^{1/2}\varepsilon^{1/2}\\ 
			&\leq \frac{\norm{B^{-1}Au_{1}-\nu_{1} u_{1}}_{A}}{\sqrt{\lambda_{1}}\nu_{\min}}+\sqrt{\kappa_{\nu}\varepsilon},
		\end{aligned}
	\end{equation}
	where $(\lambda_{1},u_{1})$ is the smallest eigenvalue of $(A,M)$ and its associated eigenvector with $\norm{u_{1}}_{M}=1$, $\nu_{\min}$ and $\nu_{\max}$ are the smallest and largest eigenvalue of $B^{-1}A$, respectively, and
	\begin{equation}
		\label{defnu1}
		\nu_{1} = \argmin_{\alpha\in\R}\norm{B^{-1}Au_{1}-\alpha u_{1}}_{A}=  \frac{u_{1}^{\Ttran}AB^{-1}Au_{1}}{u_{1}^{\Ttran}Au_{1}}.
	\end{equation}
\end{theorem}

\cref{thmvartheta} provides a clear illustration of the leading angle. For the first term in \cref{estLA}, note that $\nu_{1}$ depends only on the behavior of $B^{-1}$ in the one-dimensional space spanned by $Au_{1}$. Since $u_{1}$ is an eigenvector of $(A,M)$, if $B$ approximate $A$ well in $\spa{Au_{1}}$, then $\nu_{1}$ can be bounded independent of $\kappa_{\nu}$. The term $\sqrt{\kappa_{\nu}\varepsilon}$  reflects the global properties of the preconditioner $B$. Compared with \cref{varthetabdd}, the factor $\varepsilon$ mitigates the impact of the ratio $\kappa_{\nu}$ when $\rho$ is close to $\lambda_{1}$.

\begin{proof}[Proof of \cref{thmvartheta}]
	For any $M$-unit $x$ satisfying $\Rq(x)\leq \rho$, and $v^{\Ttran}Bx=0$, by the Cauchy--Schwarz inequality,
	\begin{equation*}
		\frac{\abs{x^{\Ttran}Av}}{\norm{x}_{A}\norm{v}_{A}} 
		= \min_{\alpha\in\R}\frac{\bigabs{x^{\Ttran}(A-\alpha B)v}}{\norm{x}_{A}\norm{v}_{A}}
		\leq \frac{\norm{B^{-1}Ax-\nu_{1}x}_{B}}{\norm{x}_{A}}\frac{\norm{v}_{B}}{\norm{v}_{A}}\leq  \frac{\norm{B^{-1}Ax-\nu_{1}x}_{B}}{\sqrt{\nu_{\min}}\norm{x}_{A}}.
	\end{equation*}
	Let $\alpha=x^{\Ttran}Mu_{1}$. Using the triangular inequality and $\abs{\alpha}\leq 1$, we have 
	\begin{equation*}
		\begin{aligned}
			\norm{B^{-1}Ax-\nu_{1}x}_{B} &= \norm{(B^{-1}A-\nu_{1}I)(x-\alpha u_{1})}_{B}+\abs{\alpha}\norm{(B^{-1}A-\nu_{1}I)u_{1}}_{B}\\ 
			&\leq \norm{(B^{-1}A-\nu_{1}I)(x-\alpha u_{1})}_{B}+\frac{\norm{(B^{-1}A-\nu_{1}I)u_{1}}_{A}}{\sqrt{\nu_{\min}}}.
		\end{aligned}
	\end{equation*}
	For the first term, by the variational property of the largest eigenvalue, 
	\begin{equation*}
		\begin{aligned}
			\frac{\norm{(B^{-1}A-\nu_{1}I)(x-\alpha u_{1})}_{B}^{2}}{\norm{x-\alpha u_{1}}_{A}^{2}} &= \frac{(x-\alpha u_{1})^{\Ttran}(AB^{-1}A-2\nu_{1}A+\nu_{1}^{2}B)(x-\alpha u_{1})}{(x-\alpha u_{1})^{\Ttran}A(x-\alpha u_{1})}\\ 
			&\leq \lambda_{\max}\bigl(A^{-1}(AB^{-1}A-2\nu_{1}A+\nu_{1}^{2}B)\bigr)\\ 
			&\leq \lambda_{\max}(B^{-1}A)-2\nu_{1}+\nu_{1}^{2}\lambda_{\max}(A^{-1}B)\\ 
			&\leq \nu_{\max}-2\nu_{1}+\frac{\nu_{1}^{2}}{\nu_{\min}},		
		\end{aligned}
	\end{equation*}
	where we use \cref{defnu} in the last inequality. Note that $\Rq(x)\geq \alpha^{2}\lambda_{1}+(1-\alpha^{2})\lambda_{2}$. We have a lower bound of $\alpha^{2}$ as 
	\begin{equation}
		\label{estxMu}
		\frac{\abs{x^{\Ttran}Mu_{1}}^{2}}{\norm{x}_{M}^{2}\norm{u_{1}}_{M}^{2}}=\alpha^{2}\geq \frac{\lambda_{2}-\Rq(x)}{\lambda_{2}-\lambda_{1}}.
	\end{equation} 
	Then using $\norm{x}_{A}^{2}=\Rq(x)\leq \rho$, we know
	\begin{equation*}
		\label{estLA2}
		\frac{\norm{x-\alpha u_{1}}_{A}^{2}}{\norm{x}_{A}^{2}} =1-\frac{\alpha^{2}\lambda_{1}}{\norm{x}_{A}^{2}} 
		\leq \frac{\lambda_{2}(\Rq(x)-\lambda_{1})}{\Rq(x)(\lambda_{2}-\lambda_{1})}
		\leq \frac{\lambda_{1}^{-1}-\rho^{-1}}{\lambda_{1}^{-1}-\lambda_{2}^{-1}}=\varepsilon.
	\end{equation*}
	The theorem is proved by combining all inequalities above.
\end{proof}
\begin{remark}
	In order to eliminate the matrix multiplication of $B$, which is hard to analyze for practical preconditioners, we scarify the order of $\kappa_{\nu}$ in the proof of \cref{thmvartheta}. 
\end{remark}

One motivation for developing the leading angle is to establish a lower bound for $\Rq(v)$, where $v$ is in the $B$-orthogonal complement.
The following theorem introduces a new parameter,  $\varrho$, which generalizes \cref{estRqMv} in the context of preconditioning, to address this issue.

\begin{theorem}
	\label{defvarrho}
	Using the notations in \cref{defvartheta}, let
	\begin{equation*}
		\varrho(B,\rho;\Rq) = \inf_{\Rq(x)\leq\rho}
		\inf_{v^{\Ttran}Bx=0} \Rq(v).
	\end{equation*}
	Denote $\varrho=\varrho(B,\rho;\Rq)$, then
	\begin{equation*}
		\frac{\varrho^{-1}-\lambda_{2}^{-1}}{\lambda_{1}^{-1}-\lambda_{2}^{-1}}\leq (\cos\vartheta+\sqrt{\varepsilon})^{2},
	\end{equation*}
	where $\varepsilon$ is defined in \cref{defveps}.
\end{theorem}
\begin{proof}
	Without loss of generality, assume $\norm{x}_{A}=\norm{v}_{A}=1$. Let $\{\widehat{u}_{i}\}$ be $A$-unit eigenvectors of $(A,M)$, \ie $\widehat{u}_{i}=u_{i}/\sqrt{\lambda_{i}}$. Suppose $x$ and $v$ have eigendecompositions
	\begin{equation*}
		x=\sum_{i=1}^{n}c_{i}\widehat{u}_{i}\quad \text{and}\quad v=\sum_{i=1}^{n}\widetilde{c}_{i}\widehat{u}_{i},\quad \text{where }\sum_{i=1}^{n}c_{i}^{2}=\sum_{i=1}^{n}\widetilde{c}_{i}^{2}=1.
	\end{equation*}
	According to \cref{defvartheta}, we have $\abs{\sum_{i=1}^{n}c_{i}\widetilde{c}_{i}} = \abs{x^{\Ttran}Av} \leq \cos\vartheta$.
	Let $\abs{c_{1}}=\cos\theta$ and $\abs{\widetilde{c}_{1}}=\cos\widetilde{\theta}$, where $\theta$ and $\widetilde{\theta}$ are both in $[0,\pi/2]$, we know that
	\begin{equation*}
		\cos\theta\cos\widetilde{\theta}-\cos\vartheta
		\leq \Bigabs{\sum_{i=2}^{n}c_{i}\widetilde{c}_{i}}
		\leq \Bigl(\sum_{i=2}^{n}c_{i}^{2}\Bigr)^{1/2}\Bigl(\sum_{i=2}^{n}\widetilde{c}_{i}^{2}\Bigr)^{1/2}
		=\sin\theta\sin\widetilde{\theta}.
	\end{equation*}
	Rearranging this inequality we have
	\begin{equation*}
		\cos\vartheta \geq \cos\theta\cos\widetilde{\theta}-\sin\theta\sin\widetilde{\theta}=\cos(\theta+\widetilde{\theta}),
	\end{equation*}
	which implies $\vartheta\leq \widetilde{\theta}+\theta$.
	Then  $\abs{\widetilde{c}_{1}}$ is bounded by
	\begin{equation*}
		\abs{\widetilde{c}_{1}} = \cos\widetilde{\theta} \leq  \cos( \vartheta-\theta)  \leq \abs{\cos\vartheta}+\abs{\sin\theta}= \cos\vartheta+\sqrt{1-c_{1}^{2}}.
	\end{equation*}
	According to the bound for $M$-angle between $u_{1}$ and $x$ in \cref{estxMu},
	\begin{equation*}
		c_{1}^{2} = (x^{\Ttran}A\widehat{u}_{1})^{2} =  \frac{(x^{\Ttran}A\widehat{u}_{1})^{2}}{\norm{x}_{A}^{2}\norm{\widehat{u}_{1}}_{A}^{2}}
		=\frac{(x^{\Ttran}Mu_{1})^{2}}{\norm{x}_{M}^{2}\norm{u_{1}}_{M}^{2}}\frac{\lambda_{1}}{\Rq(x)}
		\geq \frac{\lambda_{1}(\lambda_{2}-\rho)}{\rho(\lambda_{2}-\lambda_{1})}=1-\varepsilon.
	\end{equation*}
	Combining these two inequalities above, we know $\abs{\widetilde{c}_{1}}\leq \cos\vartheta+\sqrt{\varepsilon}$. Substituting 
	\begin{equation*}
		\frac{1}{\Rq(v)}
		=\sum_{i=1}^{n}\frac{\widetilde{c}_{i}^{2}}{\lambda_{i}}
		\leq \frac{\widetilde{c}_{1}^{2}}{\lambda_{1}}+\frac{1-\widetilde{c}_{1}^{2}}{\lambda_{2}}
        \leq \lambda_{2}^{-1}+(\lambda_{1}^{-1}-\lambda_{2}^{-1})(\cos\vartheta+\sqrt{\varepsilon})^{2}
	\end{equation*}
        as an upper bound of $\varrho^{-1}$ into $(\varrho^{-1}-\lambda_{2}^{-1})/(\lambda_{1}^{-1}-\lambda_{2}^{-1})$, we finish the proof.
\end{proof}

\begin{corollary}
	\label{asprho}
	The relationship $\varrho>\rho$ holds when
	\begin{equation*}
		\cos\vartheta < \frac{(\rho^{-1}-\lambda_{2}^{-1})^{1/2}-(\lambda_{1}^{-1}-\rho^{-1})^{1/2}}{(\lambda_{1}^{-1}-\lambda_{2}^{-1})^{1/2}}.
	\end{equation*}
\end{corollary}

The parameter $\varrho$ defined in \cref{defvarrho} provides a lower bound for $\Rq_{B}(v)$ in \cref{Hess}. Another term to address is $\norm{x}_{A_{B}}$, which is mathematically equivalent to $x^{\Ttran}Ax/x^{\Ttran}Bx$. We need to demonstrate that, in a neighborhood of $u_{1}$, the ratio $x^{\Ttran}Ax/x^{\Ttran}Bx$ is close to $u_{1}^{\Ttran}Au_{1}/u_{1}^{\Ttran}Bu_{1}$. This can be intuitively understood through continuity. The next theorem quantifies this relationship.

\begin{theorem}
	\label{estu1} 
	Suppose $\lambda_{1}\leq\rho< (\lambda_{1}+\lambda_{2})/2$, let
	\begin{equation*} 
		\varsigma_{\max}(B,\rho;\Rq) \defi \max_{\Rq(x)\leq \rho} \frac{x^{\Ttran}Ax}{x^{\Ttran}Bx},
		\quad\text{and}\quad
		\varsigma_{\min}(B,\rho;\Rq) \defi\min_{\Rq(x)\leq \rho} \dfrac{x^{\Ttran}Ax}{x^{\Ttran}Bx}.
	\end{equation*}
	Denote $\varsigma_{\max}= \varsigma_{\max}(B,\rho;\Rq)$ and $\varsigma_{\min}=\varsigma_{\min}(B,\rho;\Rq)$. Let
	\begin{equation*}
		\varepsilon_{*} = \Bigl(\frac{8\lambda_{2}}{\lambda_{2}-\lambda_{1}}\cos\vartheta + \sqrt{2\varepsilon}\Bigr)^{2}
		\quad\text{and}\quad
		\sigma = \frac{u_{1}^{\Ttran}Au_{1}}{u_{1}^{\Ttran}Bu_{1}}, 
	\end{equation*}
	where $\vartheta=\vartheta(B,\rho;\Rq)$ and $\varepsilon$ are defined in \cref{defvartheta,thmvartheta} respectively.
	Assume that $\varepsilon_{*}\leq 1/2$, then 
	\begin{equation}
		\frac{\sigma(1-2\sqrt{\varepsilon_{*}}\cos\vartheta)}{1+(\sigma\nu_{\min}^{-1}-1)\varepsilon_{*}}\leq \varsigma_{\min}\leq \varsigma_{\max}\leq \frac{\sigma(1+2\sqrt{\varepsilon_{*}}\cos\vartheta)}{1-(1-\sigma\nu_{\max}^{-1})\varepsilon_{*}}.
	\end{equation}
\end{theorem}
\begin{proof}
	See \cref{appestu1}.
\end{proof}

\begin{remark}
	Suppose $\cos\vartheta=\order(\sqrt{\varepsilon})$, then
	\begin{equation*}
		\varsigma_{\max}/\varsigma_{\min} \leq 1+\order(\sigma\nu_{\min}^{-1}\varepsilon).	
	\end{equation*}
	Compared with the direct bound $\varsigma_{\max}/\varsigma_{\min}\leq \kappa_{\nu}$,
	our new bound is significantly better.
\end{remark}

At the end of this part, we show the invariance of $\vartheta$, $\varrho$, $\varsigma_{\min}$ and $\varsigma_{\max}$ under similarity transformation.
\begin{proposition}
	Recall the $\Rq_{B}(x)$ defined in \cref{preconOpt} as 
	\begin{equation*}
		\Rq_{B}(x) = \frac{x^{\Ttran}A_{B}x}{x^{\Ttran}M_{B}x}, \quad\text{where}\quad A_{B}=B^{-1/2}AB^{-1/2}
        \quad\text{and}\quad 
        M_{B}=B^{-1/2}MB^{-1/2}.
	\end{equation*}
	It holds that 
	\begin{equation*}
		\begin{aligned}
			\vartheta(B,\rho;\Rq) &= \vartheta(I,\rho;\Rq_{B}),\quad
		\varrho(B,\rho;\Rq) = \varrho(I,\rho;\Rq_{B}),\\ 
		\varsigma_{\min}(B,\rho;\Rq) &= \varsigma_{\min}(I,\rho;\Rq_{B}),\quad
		\varsigma_{\max}(B,\rho;\Rq) = \varsigma_{\max}(I,\rho;\Rq_{B}).
		\end{aligned}
	\end{equation*}
\end{proposition}
\begin{proof}
	We just show $\vartheta(B,\rho;\Rq) = \vartheta(I,\rho;\Rq_{B})$ since the proof are essentially same. By definition, 
	\begin{equation*}
		\begin{aligned}
			\vartheta(B,\rho;\Rq) &= \inf_{\Rq(x)\leq\rho} \inf_{v^{\Ttran}Bx=0} 
				\arccos\Bigl(\frac{\abs{v^{\Ttran}Ax}}{\norm{v}_{A}\norm{x}_{A}}\Bigr)\\ 
			&=\inf_{\Rq_{B}(B^{1/2}x)\leq\rho} \inf_{(B^{1/2}v)^{\Ttran}(B^{1/2}x)=0} 
			\arccos\Bigl(\frac{\abs{(B^{1/2}v)^{\Ttran}A_{B}(B^{1/2}x)}}{\norm{B^{1/2}v}_{A_{B}}\norm{B^{1/2}x}_{A_{B}}}\Bigr)\\ 
			&=\vartheta(I,\rho;\Rq_{B}).
		\end{aligned}
	\end{equation*}
\end{proof}

\subsection{Analysis on $B$-sphere: with preconditioning}

With the new instruments developed in the last subsection, we can now analyze the geodesic convexity and Lipschitz smoothness of $\Rq_{B}$ on $\mathcal{X}_{B}$.

\begin{theorem}
	\label{lemcvxfunP}
	Suppose $\lambda_{1}\leq\rho<(\lambda_{1}+\lambda_{2})/2$, let $\Rq_{B}(x)$ be the Rayleigh quotient defined in \cref{preconOpt}, denote
	\begin{equation*}
		\begin{aligned}
			\vartheta &= \vartheta(I,\rho;\Rq_{B}),\quad
		\varrho = \varrho(I,\rho;\Rq_{B}),\\ 
		\varsigma_{\min} &= \varsigma_{\min}(I,\rho;\Rq_{B}),\quad
		\varsigma_{\max} = \varsigma_{\max}(I,\rho;\Rq_{B}).
		\end{aligned}
	\end{equation*}
	Under the condition $\varrho>\rho$, for any $x\in\mathcal{X}_{B}$, the following inequality holds:
	\begin{equation}
		\label{defmuLP}
		\mu_{B}\norm{v}^{2} \leq
		\dual{v,\nabla^{2}\Rq_{B}(x)[v]}\leq
		L_{B}\norm{v}^{2},
	\end{equation}
	where
	\begin{equation}
		\label{defLmu}
		\begin{aligned}
			\mu_{B}         & = \frac{2\nu_{\min}\lambda_{1}}{\varsigma_{\max}}\Bigl(1-\frac{\rho}{\varrho}\Bigr)-C_{\mathcal{X}},\quad
			L_{B} = \frac{2\nu_{\max}\rho}{\varsigma_{\min}}\Bigl(1-\frac{\lambda_{1}}{\lambda_{n}}\Bigr)+C_{\mathcal{X}},                  \\
			C_{\mathcal{X}} & = \frac{8\nu_{\max}\rho}{\varsigma_{\min}}
			\biggl(
			\frac{\rho-\lambda_{1}}{\lambda_{1}}
			+\Bigl(\frac{\rho-\lambda_{1}}{\lambda_{1}}\Bigr)^{1/2}\cos\vartheta
			\biggr),\quad
			\sigma = \frac{u_{1}^{\Ttran}Au_{1}}{u_{1}^{\Ttran}Bu_{1}},
		\end{aligned}
	\end{equation}
	and $\nu_{\min}$ and $\nu_{\max}$ are the smallest and largest eigenvalues of $A_{B}$, respectively.
\end{theorem}
\begin{proof}
	Without loss of generality, assume $\norm{v}=1$, then\footnote{Details are provided in \cref{appHess} in \cref{appRO}.}
	\begin{equation*}
		\dual{v,\nabla^{2}\Rq_{B}(x)[v]} = J_{1}-J_{2},
	\end{equation*}
	where, denoting $r = A_{B}x-\Rq_{B}(x)M_{B}x$, 
	\begin{equation*}
		J_{1}  = \frac{2\norm{v}_{A_{B}}^{2}\Rq_{B}(x)}{\norm{x}_{A_{B}}^{2}}\Bigl(1-\frac{\Rq_{B}(x)}{\Rq_{B}(v)}\Bigr)
        \quad\text{and}\quad 
		J_{2}  = \frac{8}{\norm{x}_{M_{B}}^{2}}\cdot v^{\Ttran}r\cdot \frac{v^{\Ttran}M_{B}x}{\norm{x}_{M_{B}}^{2}}.
	\end{equation*}

	Under the condition $\varrho>\rho$, the first term $J_{1}$ can be bounded with
	\begin{equation}
		\label{estI1}
		\frac{2\nu_{\min}\lambda_{1}}{\varsigma_{\max}}\Bigl(1-\frac{\rho}{\varrho}\Bigr)
		\leq J_{1} \leq
		\frac{2\nu_{\max}\rho}{\varsigma_{\min}}\Bigl(1-\frac{\lambda_{1}}{\lambda_{n}}\Bigr),
	\end{equation}
	by $\lambda_{1}\leq \Rq_{B}(x)\leq \rho$, $\varrho\leq \Rq_{B}(v)\leq \lambda_{n}$, $\nu_{\min}\leq \norm{v}_{A_{B}}^{2}\leq \nu_{\max}$ and the estimation for $\norm{x}_{A_{B}}^{2}$ in \cref{estu1}.

	For the second term $J_{2}$, by the Cauchy--Schwarz inequality, we know 
	\begin{equation*}
		\abs{v^{\Ttran}r}\leq \norm{v}_{A_{B}}\norm{r}_{A_{B}^{-1}}.
	\end{equation*}
	According to the definition of $\vartheta$, we obtain
	\begin{equation*}
		\begin{aligned}
			\frac{\abs{v^{\Ttran}M_{B}x}}{\norm{x}_{M_{B}}^{2}} & =\frac{\abs{v^{\Ttran}\Rq_{B}(x)M_{B}x}}{\norm{x}_{A_{B}}^{2}}
			=\frac{\abs{v^{\Ttran}(r-A_{B}x)}}{\norm{x}_{A_{B}}^{2}}
			\leq \frac{\abs{v^{\Ttran}r}+\abs{v^{\Ttran}A_{B}x}}{\norm{x}_{A_{B}}^{2}}\\
			& \leq \frac{\norm{v}_{A_{B}}\norm{r}_{A_{B}^{-1}}+\norm{v}_{A_{B}}\norm{x}_{A_{B}}\cos\vartheta}{\norm{x}_{A_{B}}^{2}}.
		\end{aligned}
	\end{equation*} 
	Combining two inequalities above, we have
	\begin{equation}
 \label{estJ20}
         \begin{aligned}
             \abs{J_{2}}& \leq \frac{8}{\norm{x}_{M_{B}}^{2}}\cdot \norm{v}_{A_{B}}\norm{r}_{A_{B}^{-1}}\cdot \frac{\norm{v}_{A_{B}}\norm{r}_{A_{B}}^{-1}+\norm{v}_{A_{B}}\norm{x}_{A_{B}}\cos\vartheta}{\norm{x}_{A_{B}}^{2}} \\ 
             &\leq \frac{8\nu_{\max}}{\varsigma_{\min}}
        			\biggl(\frac{\norm{r}_{A_{B}^{-1}}^{2}}{\norm{x}_{M_{B}}^{2}}+\frac{\norm{r}_{A_{B}^{-1}}\norm{x}_{A_{B}}\cos\vartheta}{\norm{x}_{M_{B}}^{2}}
        			\biggr),    
         \end{aligned}    
	\end{equation}
 where we use $\norm{v}_{A_{B}}^{2}\leq \nu_{\max}$ and $\norm{x}_{A_{B}}^{2}\geq \varsigma_{\min}$ in the last inequality. 
	 Note that
	\begin{equation*}
		\begin{aligned}
			\frac{x^{\Ttran}M_{B}A_{B}^{-1}M_{B}x}{x^{\Ttran}M_{B}x} 
			=\frac{(M_{B}x)^{\Ttran}A_{B}^{-1}(M_{B}x)}{(M_{B}x)^{\Ttran}M_{B}^{-1}(M_{B}x)}\leq \frac{1}{\lambda_{1}}.
		\end{aligned}
	\end{equation*}
	We can provide a bound for the residual $r$ as 
	\begin{equation}
 \label{estr}
		\frac{\norm{r}_{A_{B}^{-1}}^{2}}{\norm{x}_{M_{B}}^{2}} = \Rq_{B}(x)\Bigl(\Rq_{B}(x)\frac{x^{\Ttran}M_{B}A_{B}^{-1}M_{B}x}{x^{\Ttran}M_{B}x}-1\Bigr)
		\leq \frac{\rho(\rho-\lambda_{1})}{\lambda_{1}},
	\end{equation}
	where $\Rq_{B}(x)\leq \rho$ is used.
	Combining \cref{estJ20,estr}, we have
	\begin{equation}
		\label{estI2}
		\abs{J_{2}}\leq \frac{8\nu_{\max}\rho}{\varsigma_{\min}}
		\biggl(
		\frac{\rho-\lambda_{1}}{\lambda_{1}}
		+\Bigl(\frac{\rho-\lambda_{1}}{\lambda_{1}}\Bigr)^{1/2}\cos\vartheta
		\biggr).
	\end{equation}
	Then the theorem is proved by \cref{estI1,estI2}.
\end{proof}

\begin{corollary}
	\label{rmkPEVP}
	When $\varepsilon=(\lambda_{1}^{-1}-\rho^{-1})/(\lambda_{1}^{-1}-\lambda_{2}^{-1})$ is sufficiently small, the condition $\varrho>\rho$ holds due to \cref{asprho}. Moreover, suppose $\cos\vartheta$ is also small, which guarantees the condition $\varepsilon_{*}\leq 1/2$ in \cref{estu1}, we have
	\begin{equation*}
		\begin{aligned}
			\mu_{B} &\geq  \frac{2\nu_{\min}\lambda_{1}}{\sigma}\Bigl(1-\frac{\lambda_{1}}{\lambda_{2}}\Bigr)\Bigl(1-\order\bigl(\kappa_{\nu}(\gap^{-2}\cos^{2}\vartheta+\varepsilon)\bigr)\Bigr),\\ 
			L_{B} &\leq  \frac{2\nu_{\max}\lambda_{1}}{\sigma}\Bigl(1-\frac{\lambda_{1}}{\lambda_{n}}\Bigr)\Bigl(1+\order\bigl(\sigma\nu_{\min}^{-1}(\gap^{-2}\cos^{2}\vartheta+\varepsilon)\bigr)\Bigr),
		\end{aligned}
	\end{equation*}
	where $\gap=(\lambda_{2}-\lambda_{1})/\lambda_{2}$. This means $\Rq_{B}$ is $\mu_{B}$-geodesically convex and $L_{B}$-Lipschitz smooth. Furthermore, the condition number of the Riemannian optimization problem after preconditioning \cref{preconOpt} is
	\begin{equation*}
		\kappa_{B} = \frac{L_{B}}{\mu_{B}} \leq \kappa_{\nu}\frac{1-\lambda_{1}/\lambda_{n}}{1-\lambda_{1}/\lambda_{2}}\Bigl(1+\order\bigl(\kappa_{\nu}(\gap^{-2}\cos^{2}\vartheta+\varepsilon)\bigr)\Bigr).
	\end{equation*}
	Compared with the condition number of \cref{stdopt} in \cref{rmkEVP}, \ie
	\begin{equation*}
		\kappa = \frac{\lambda_{n}}{\lambda_{2}}\frac{1-\lambda_{1}/\lambda_{n}}{1-\lambda_{1}/\lambda_{2}}\bigl(1+\order(\varepsilon)\bigr),
	\end{equation*}
	when $B$ is a good preconditioner for $A$, \ie $\kappa_{\nu}=\nu_{\max}/\nu_{\min}$ and $\cos\vartheta$ are small, the condition number $\kappa_{B}$ is significantly improved.
\end{corollary}
\begin{proof}
	See \cref{apprmkPEVP}.
\end{proof}
\begin{remark}
	The condition number $\kappa_{B}$ plays a crucial role in the convergence of gradient type methods. For gradient methods and accelerated gradient methods, the rate of convergence are $1-c\kappa_{B}^{-1}$ and $1-c\kappa_{B}^{-1/2}$, respectively. 
\end{remark}

\section{Convex Riemannian optimization}
In this section, we consider a general convex Riemannian optimization problem
\begin{equation}
	\label{optmanifolds}
	x_{*}=\argmin_{x\subset\mathcal{M}} f(x),
\end{equation}
where $\mathcal{M}$ is a Riemannian manifold and $f\colon \mathcal{M}\mapsto\R$ is a smooth function. In order to save notations, we reuse the notations in the previous section. Here we list our basic assumption about \cref{optmanifolds}.
\begin{Assumption}
	\label{aspX}
	Suppose $\mathcal{X}\subset\mathcal{M}$ is a geodesically convex region, where the sectional curvature of $\mathcal{M}$ is bounded in $[-K,K]$. Assume $x_{*}\in\mathcal{X}$ and $f$ is $\mu$-geodesically convex and $L$-Lipschitz smooth on $\mathcal{X}$. Moreover, let $\mathcal{B}_{R}$ be the closed ball on $\mathcal{M}$ defined as
	\begin{equation*}
		\mathcal{B}_{R}\defi \{x\in\mathcal{M}\mid \norm{\log_{x_{*}}(x)}\leq  R\}.
	\end{equation*}
	Assume that for any $R>0$, if $\mathcal{B}_{R}\subset \mathcal{X}$, then $\mathcal{B}_{R}$ is geodesically convex.
\end{Assumption}
\begin{remark}
	Unlike most literature on Riemannian acceleration, only local geodesic convexity on $\mathcal{X}$ instead of $\mathcal{M}$ is assumed in \cref{aspX}, which aligns with our previous results on the preconditioned eigensystem in \cref{lemcvxfunP}. 
\end{remark}
\subsection{Riemannian accelerated gradient descent method}
The estimate sequence, which was first proposed by Nesterov in \cite{Nesterov1983}, is a traditional and popular technique for the analysis of momentum terms. Zhang and Sra generalized it into Riemannian manifolds, called weak estimate sequence, as follows.

\begin{definition}[{Weak estimate sequence, \cite[Definition~1]{Zhang2018}}]
	Let $f\colon \mathcal{M}\mapsto \R$, if there exists $0<\alpha<1$ and a sequence $\phi_{m}\colon \mathcal{M}\mapsto \R$ such that
	\begin{equation*}
		\phi_{m}(x_{*})-f(x_{*})\leq (1-\alpha)^{m} \bigl(\phi_{0}(x_{*})-f(x_{*})\bigr),
	\end{equation*}
	where $x_{*}$ is the solution of \cref{optmanifolds}, then $(\alpha,\{\phi_{m}\})$ is called a weak estimate sequence\footnote{Here we slightly simplify the definition in \cite{Zhang2018}. In their paper, $\alpha$ is allowed to depend on $m$.}.
\end{definition}

With the weak estimate sequence, Zhang and Sra proposed a Riemannian Accelerated Gradient Descent (RAGD) method, \ie \cite[Algorithm~2]{Zhang2018}, and analyzed its convergence. Here we slightly generalize it as follows. For initial point $v_{0}=x_{0}\in\mathcal{X}$, step size $h\leq 1/L$ and shrinkage parameter $\beta>0$, let
\begin{equation*}
	\alpha=\frac{\sqrt{\beta^{2}+4(1+\beta)\mu h}-\beta}{2},\quad \gamma=\frac{\alpha\mu}{\alpha+\beta}\quad \text{and}\quad \overline{\gamma}=(1+\beta)\gamma.
\end{equation*}
For $m=0,1,\dotsc$, consider the following recurrence:
\begin{equation}
	\label{GRAGD}
	\left\{
	\begin{aligned}
		 & y_{m}=\exp_{x_{m}}\Bigl(\dfrac{\alpha}{1+\alpha+\beta}\log_{x_{m}}(v_{m})\Bigr),                                            \\
		 & v_{m+1}=\exp_{y_{m}}\Bigl(\frac{1-\alpha}{1+\beta}\log_{y_{m}}(v_{m})-\frac{\alpha}{\overline{\gamma}}\grad f(y_{m})\Bigr), \\
		 & \text{update $x_{m+1}$ satisfying $f(x_{m+1})\leq f(y_{m})-\frac{h}{2}\norm{\grad f(y_{m})}^{2}$.}
	\end{aligned}
	\right.
\end{equation}

\begin{remark}
	In RAGD of \cite{Zhang2018}, the iterates $x_{m+1}$ are constructed as
\begin{equation*}
	x_{m+1}=\exp_{y_{m}}(-h\grad f(y_{m})).
\end{equation*}
Note that in their estimate sequence approach, the only requirement for $x_{m+1}$, in the proof of \cite[Lemma~6]{Zhang2018}, is that: 
\begin{equation*}
	f(x_{m+1})\leq f(y_{m})-\frac{h}{2}\norm{\grad f(y_{m})}^{2}\leq \phi_{m+1}^{*},
\end{equation*}
where $\phi_{m+1}^{*}$ is a given scalar independent of the choice of $x_{m}$. Thus theoretical convergence results in \cite{Zhang2018} can be generalized into \cref{GRAGD}, which is given in \cref{propestSeq}. For self-containess, we also provide a more detailed argument for its proof in \cref{apppropestSeq}.
\end{remark}

\begin{proposition}
	\label{propestSeq}
	For the iterates from \cref{GRAGD} with any shrinkage parameter $\beta>0$ and initial value $\phi_{0}^{*} = f(x_{0})$ and $\phi_{0}(x)=\phi_{0}^{*}+\frac{\gamma}{2}\norm{\log_{x_{0}}(x)}^{2}$, let
	\begin{equation*}
		\phi_{i+1}(x)=  \phi_{i+1}^{*}+\frac{\overline{\gamma}}{2}\norm{\log_{y_{i}}(x)-\log_{y_{i}}(v_{i+1})}^{2},
	\end{equation*}
	for $i\geq 0$, where
	\begin{equation*}
		\begin{aligned}
			\phi_{i+1}^{*} = & (1-\alpha)\phi_{i}^{*}+\alpha f(y_{i})-\frac{\alpha^{2}}{2\overline{\gamma}}\norm{\grad f(y_{i})}^{2}                               \\
			                 & +\frac{\alpha(1-\alpha)}{1+\beta}\Bigl(\frac{\mu}{2}\norm{\log_{y_{i}}(v_{i})}^{2}+\dual{\grad f(y_{i}),\log_{y_{i}}(v_{i})}\Bigr).
		\end{aligned}
	\end{equation*}
	Suppose that $\{x_{i}\}_{i=0}^{m+1}$, $\{y_{i}\}_{i=0}^{m}$ and $\{v_{i}\}_{i=0}^{m+1}$ are all in $\mathcal{X}$ and
	\begin{equation}
		\label{distortion}
		\norm{\log_{y_{i+1}}(x_{*})-\log_{y_{i+1}}(v_{i+1})}^{2}
		\leq (1+\beta) \norm{\log_{y_{i}}(x_{*})-\log_{y_{i}}(v_{i+1})}^{2}
	\end{equation}
	holds for $0\leq i\leq m-1$,
	then the following relationships hold:
	\begin{enumerate}
		\item $(\alpha,\{\phi_{i}\}_{i=0}^{m+1})$ is a weak estimate sequence.
		\item $f(x_{i})\leq \phi_{i}^{*}$ holds for all $0\leq i\leq m+1$.
		\item $\phi_{i+1}(x_{*})-f(x_{*}) \leq (1-\alpha) \bigl(\phi_{i}(x_{*})-f(x_{*})\bigr)$ holds for all $0\leq i\leq m$.
		\item The iterates $\{x_{i}\}_{i=0}^{m+1}$ from \cref{GRAGD} satisfy
		\begin{equation}
			\label{mainprop}
			\begin{aligned}
				f(x_{i})-f(x_{*}) & \leq \phi_{i}(x_{*})-f(x_{*})\leq (1-\alpha)^{i+1} \bigl(\phi_{0}(x_{*})-f(x_{*})\bigr)       \\
									& =(1-\alpha)^{i} \Bigl(f(x_{0})-f(x_{*})+\frac{\gamma}{2}\norm{\log_{x_{0}}(x_{*})}^{2}\Bigr).
			\end{aligned}
		\end{equation}
	\end{enumerate}
\end{proposition}

\subsection{Locally optimal Riemannian accelerated gradient method}
In this part, we propose a new Riemannian acceleration scheme, called Locally Optimal Riemannian Accelerated Gradient (LORAG) as \cref{algo}, by combining the RAGD method with the locally optimal technique from LOBPCG \cite{Knyazev2001}.
\begin{algorithm2e}[H]
	\caption{Locally Optimal Riemannian Accelerated Gradient method}
	\label{algo}
	\KwIn{Initial point $x_{0}$, parameters $\beta$, $\mu$ and $L$.}
	Set $0<\alpha\leq \frac{\sqrt{\beta^{2}+4(1+\beta)\mu/L }-\beta}{2}$, $\gamma=\frac{\alpha\mu}{\alpha+\beta}$, $\overline{\gamma}=(1+\beta)\gamma$, and $v_{0}=x_{0}$\;
	\For{$m=0,1,2,\dotsc,$}{
		Update $y_{m}$ and $v_{m+1}$ by \cref{GRAGD}\;
		Find $x_{m+1}$ such that $x_{m+1}=\argmin\limits_{g\in\spa{\grad f(y_{m}),\log_{y_{m}}(x_{m})}}f\bigl(\exp_{y_{m}}(g)\bigr)$\;
	}
\end{algorithm2e}

The following lemma shows an advantage of the LORAG method over RAGD methods, specifically in terms of the monotonicity of the function values $f(x_{m})$.

\begin{lemma}
	\label{lemxk}
	For any $\{x_{m}\}$ from \cref{algo}, $f(x_{m+1})\leq f(x_{m})\leq \dotsb\leq f(x_{0})$.
\end{lemma}
\begin{proof}
	By the locally optimal step for $x_{m+1}$ in \cref{algo}, we have
	\begin{equation*}
		f(x_{m+1})\leq f\Bigl(\exp_{y_{m}}\bigl(\log_{y_{m}}(x_{m})\bigr)\Bigr)=f(x_{m})\leq \dotsb \leq f(x_{0}).
	\end{equation*}
	which finishes the proof.
\end{proof}

Besides the monotonicity, we are also interested in the rate of convergence for the LORAG method.
In contrast to the majority of existing Riemannian Nesterov methods, such as that described in \cite[Assumption~4]{Zhang2018}, which rely on the assumption that all iterates lie within the region $\mathcal{X}$ where the function $f$ is both geodesically convex and Lipschitz smooth, we explore methods to achieve convergence without the prior assumption that iterates are confined to this region.
As demonstrated in \cref{thmLORAG}, the sole assumption required is that the initial point, $x_{0}$, is sufficiently close to the solution, $x_{*}$. This assumption is particularly user-friendly for a diverse range of real-world problems, including eigenvalue computations as discussed in last section. This is because it is more straightforward to verify the condition of the initial point than that of all iterates, which are typically unavailable for verification prior to their computation.

The basic idea is to demonstrate that when the initial point $x_{0}$ is sufficiently close to $x_{*}$, the minimum in $\mathcal{M}$, all iterates $(y_{m}, x_{m+1}, v_{m+1})$ will remain within $\mathcal{X}$. A similar approach for Euclidean space has been discussed in \cite{Shao2023a}. From the perspective of Riemannian manifolds, the framework is comparable, although the technical details are more challenging.

\begin{theorem}
	\label{thmLORAG}
	Under \cref{aspX}, let 
	\begin{equation*}
		\begin{aligned}
			E_{0} & = \phi_{0}(x_{*})-f(x_{*}) = f(x_{0})-f(x_{*})+\frac{\gamma}{2}\norm{\log_{x_{0}}(x_{*})}^{2},\\ 
			R_{1} & = \Bigl(\frac{2E_{0}}{\gamma}\Bigr)^{1/2}, \quad \text{and}\quad R_{2}=\max \Bigl\{2R_{1}, \Bigl(1+\frac{\alpha L}{(1+\beta)\gamma}\Bigr)R_{1}\Bigr\}.
		\end{aligned}
	\end{equation*}
	Suppose that $(1+2KR_{1}^{2})^{2}\leq 1+\beta$ and 
	\begin{equation}
		\label{invarset}
		\{x\in\mathcal{M}\mid f(x)\leq f(x_{0}) \}\subset \mathcal{B}_{R_{1}}
		\subset \mathcal{B}_{R_{2}} \subset \mathcal{X},
	\end{equation}
	then the iterates $\{x_{m}\}$ from \cref{algo} lie in $\mathcal{B}_{R_{1}}\subset \mathcal{X}$.
\end{theorem}
\begin{proof}
	See \cref{appthmLORAG}
\end{proof}

Combining \cref{propestSeq,thmLORAG}, we establish the convergence of \cref{algo} as follows.
\begin{corollary}
	\label{corLORAG}
	With notations and conditions \cref{invarset} in \cref{thmLORAG}, the iterates from \cref{algo} satisfy
	\begin{align}
		\label{corLORAG1} & f(x_{m})\leq f(x_{m-1})\leq \dotsb \leq f(x_{0}), \\
		\label{corLORAG2} & f(x_{m})-f(x_{*})\leq (1-\alpha)^{m}E_{0}.
	\end{align}
\end{corollary}
\begin{proof}
	The monotonicity comes from \cref{lemxk}. For the rate of convergence, from \cref{thmLORAG} we know all iterates $(y_{m},x_{m+1},v_{m+1})$ are all in $\mathcal{B}_{R_{1}}\subset \mathcal{X}$, then the convergence is obtained from \cref{propestSeq}.
\end{proof}

At the end of this section, we provide some choices of hyper-parameters.

\begin{proposition}
	\label{rmkcoef}
	For function $f$ satisfying \cref{aspX}, assume $\kappa=L/\mu\geq 9$, let
	\begin{equation*}
		\begin{aligned}
			 & \beta = \frac{3}{2\sqrt{\kappa}-4},\quad
			\alpha = \frac{1}{2\sqrt{\kappa}},\quad
			\gamma = \frac{(\sqrt{\kappa}-2)\mu}{2(2\sqrt{\kappa}-1)},                                                \\
			 & R_{1} = \Bigl(\frac{2 E_{0}}{\gamma}\Bigr)^{1/2}\quad\text{and}\quad   R_{2}=(1+2\sqrt{\kappa})R_{1},
		\end{aligned}
	\end{equation*}
	where $E_{0}=f(x_{0})-f(x_{*})+\frac{\gamma}{2}\norm{\log_{x_{*}}(x_{0})}^{2}$. When $x_{0}$ is sufficiently close to $x_{*}$, \ie
	\begin{equation*}
		(1+KR_{1}^{2})^{2}\leq 1+\beta\quad\text{and}\quad \mathcal{B}(R_{2})\subset \mathcal{X},
	\end{equation*}
	then all conditions in \cref{thmLORAG} hold, and the rate of convergence is
	\begin{equation*}
		f(x_{m})-f(x_{*}) \leq \Bigl(1-\frac{1}{2\sqrt{\kappa}}\Bigr)^{m}E_{0}.
	\end{equation*}
	Moreover, in this situation, it holds that 
	\begin{equation*}
		R_{1}^{2}\leq \frac{22}{\mu}\bigl(f(x_{0})-f(x_{*})\bigr)
		\quad \text{and}\quad
		R_{2}^{2}\leq \frac{22}{\mu}(1+2\sqrt{\kappa})^{2}\bigl(f(x_{0})-f(x_{*})\bigr).
	\end{equation*}
\end{proposition}
\begin{proof}
	See \cref{apprmkcoef}.
\end{proof}

\begin{remark}
	The coefficients in \cref{rmkcoef} are not optimal. Actually, when $\beta$ is sufficiently small, $\alpha$ can be any positive scalar smaller than $\kappa^{-1/2}$.
\end{remark}
\section{Riemannian acceleration with preconditioning}

Now let us turn back to symmetric eigenvalue problems. As discussed in \cref{lemcvxfunP}, under some mild conditions, the symmetric eigenvalue problem with preconditioning is geodesically convex and Lipschitz smooth on manifolds, which means we can solve it by the LORAG method, \ie \cref{algo}.
One practical difficulty is the multiplication of the preconditioner $B$. For example, when computing $y_{m}$, we need to compute
\begin{equation*}
	\begin{aligned}
		\theta_{m} & = \dfrac{\alpha}{\alpha+\beta+1}\arccos(x_{m}^{\Ttran}Bv_{m}),           \\
		w_{m}      & = v_{m}-(x_{m}^{\Ttran}Bv_{m})x_{m},                                     \\
		y_{m}      & = \cos(\theta_{m})x_{m}+\sin(\theta_{m})\dfrac{w_{m}}{\norm{w_{m}}_{B}}.
	\end{aligned}
\end{equation*}
In most situation, the preconditioner $B$ is only available for solving linear system rather than multiplication. In order to overcome it, we use some ``co-iterates'':
\begin{equation}
	\label{coiterate}
	\widetilde{x}_{m}=Bx_{m},\quad \widetilde{v}_{m}=Bv_{m},\quad  \widetilde{y}_{m}=By_{m}.
\end{equation}
With \cref{coiterate}, we can compute the $B$ inner-product and norm by a standard inner-product of one iterate and one co-iterate. 
Specifically, the computation of $y_{m}$ and $\widetilde{y}_{m}$ become 
\begin{equation}
	\label{RAPy}
	\begin{aligned}
		\theta_{m} & = \dfrac{\alpha}{\alpha+\beta+1}\arccos(x_{m}^{\Ttran}\widetilde{v}_{m}),           \\
		w_{m}      & = v_{m}-(x_{m}^{\Ttran}\widetilde{v}_{m})x_{m},   \quad   \widetilde{w}_{m}  = \widetilde{v}_{m}-(x_{m}^{\Ttran}\widetilde{v}_{m})\widetilde{x}_{m},                                \\
		y_{m}      & = \cos(\theta_{m})x_{m}+\sin(\theta_{m})\dfrac{w_{m}}{\sqrt{w_{m}^{\Ttran}\widetilde{w}_{m}}},\\
		\widetilde{y}_{m}   &    = \cos(\theta_{m})\widetilde{x}_{m}+\sin(\theta_{m})\dfrac{\widetilde{w}_{m}}{\sqrt{w_{m}^{\Ttran}\widetilde{w}_{m}}}.
	\end{aligned}
\end{equation}
Similarly, we can compute $v_{m}$ and $\widetilde{v}_{m}$ by 
\begin{equation}
	\label{RAPv}
	\begin{aligned}
		\widetilde{g}_{m}   & = \frac{2}{y_{m}^{\Ttran}My_{m}}\Bigl(Ay_{m}-\frac{y_{m}^{\Ttran}Ay_{m}}{y_{m}^{\Ttran}My_{m}}My_{m}\Bigr),
		\quad g_{m}=B^{-1}\widetilde{g}_{m},\\
		p_{m}             & = v_{m}-(y_{m}^{\Ttran}\widetilde{v}_{m})y_{m},
		\quad \widetilde{p}_{m}=\widetilde{v}_{m}-(y_{m}^{\Ttran}\widetilde{v}_{m})\widetilde{y}_{m},         \\
		q_{m}             & = \frac{(1-\alpha)\theta_{m}}{\alpha\sqrt{p_{m}^{\Ttran}\widetilde{p}_{m}}}p_{m}-\frac{\alpha}{(1+\beta)\gamma}g_{m},             \quad 
		\widetilde{q}_{m}    = \frac{(1-\alpha)\theta_{m}}{\alpha\sqrt{p_{m}^{\Ttran}\widetilde{p}_{m}}}\widetilde{p}_{m}-\frac{\alpha}{(1+\beta)\gamma}\widetilde{g}_{m},                            \\
		v_{m+1}           & = \cos\sqrt{q_{m}^{\Ttran}\widetilde{q}_{m}}y_{m}+\frac{\sin \sqrt{q_{m}^{\Ttran}\widetilde{q}_{m}}}{\sqrt{q_{m}^{\Ttran}\widetilde{q}_{m}}}q_{m},   \\
		\widetilde{v}_{m+1}           & = \cos\sqrt{q_{m}^{\Ttran}\widetilde{q}_{m}}\widetilde{y}_{m}+\frac{\sin \sqrt{q_{m}^{\Ttran}\widetilde{q}_{m}}}{\sqrt{q_{m}^{\Ttran}\widetilde{q}_{m}}}\widetilde{q}_{m}. 
	\end{aligned}
\end{equation}
Another practical difficulty is to solve the locally optimal step as 
\begin{equation*}
	x_{m+1}=\argmin\limits_{g\in \mathcal{V}_{m}}\Rq\bigl(\exp_{y_{m}}(g)\bigr),\quad\text{where}\quad  \mathcal{V}_{m}=\spa{g_{m},\log_{y_{m}}(x_{m})}.
\end{equation*}
Note that for any $g\in\mathcal{V}_{m}$, by the homogeneity of Rayleigh quotient, \ie $\Rq(\alpha x)=\Rq(x)$ for any $\alpha\neq 0$, we have 
\begin{equation*}
	\Rq\bigl(\exp_{y_{m}}(g)\bigr) = \Rq\bigl((y_{m}+g\tan\norm{g}_{B})\cos\norm{g}_{B}\bigr) = \Rq(y_{m}+g\tan\norm{g}_{B}).
\end{equation*}
Since $g\tan\norm{g}_{B}\in \mathcal{V}_{m}$, we can rewrite the local optimization problem into an eigenvalue problem as 
\begin{equation*}
	\min_{g\in\mathcal{V}_{m}}\Rq\bigl(\exp_{y_{m}}(g)\bigr) = \min_{g\in\mathcal{V}_{m}} \Rq(y_{m}+g) = \min_{x\in\spa{g_{m},x_{m},y_{m}}} \Rq(x), 
\end{equation*}
where we use the homogeneity of $\Rq$ again and $\spa{\log_{y_{m}}(x_{m}),y_{m}}=\spa{x_{m},y_{m}}$. Suppose the minimizer is $z_{m}=c_{1}x_{m}+c_{2}g_{m}+c_{3}y_{m}$, we can update $x_{m+1}$ and $\widetilde{x}_{m+1}$ by 
\begin{equation}
	\label{RAPx}
	\widetilde{z}_{m} = c_{1}\widetilde{x}_{m}+c_{2}\widetilde{g}_{m}+c_{3}\widetilde{y}_{m},\quad 
	x_{m+1} = \frac{z_{m}}{\sqrt{z_{m}^{\Ttran}\widetilde{z}_{m}}},\quad \widetilde{x}_{m+1} = \frac{\widetilde{z}_{m}}{\sqrt{z_{m}^{\Ttran}\widetilde{z}_{m}}}
\end{equation}

Now we propose a Riemannian Acceleration with Preconditioning (RAP) for symmetric eigenvalues problems as \cref{algoPLORAG}. 
Note that the convexity radius of the sphere is $\pi/2$ \cite[Page 84]{Klingenberg1995}, \ie any geodesic ball in $\mathcal{X}_{B}$ is convex. Combining \cref{thmLORAG,rmkcoef,lemcvxfunP}, we can obtain the convergence of \cref{algoPLORAG}.
\begin{theorem}
	\label{thmPLORAG}
	Let $\Rq(x)$ be the Rayleigh quotient defined in \cref{defRQ},
	suppose $\lambda_{1}\leq \rho<(\lambda_{1}+\lambda_{2})/2$ and the parameters $\mu_{B}$ and $L_{B}$ defined in \cref{defmuLP}
	satisfy $L_{B}\geq 9\mu_{B}>0$. Denote $\kappa_{B}=L_{B}/\mu_{B}$.
	When the initial point $x_{0}$ satisfies
	\begin{equation}
		\label{conX0}
		\Rq(x_{0})-\lambda_{1}\leq  \frac{(\rho-\lambda_{1})}{11(1+2\sqrt{\kappa_{B}})^{2}\kappa_{B}},
	\end{equation}
	\cref{algoPLORAG} is monotonic convergence. The rate of convergence is
	\begin{equation}
		\label{asprateEVP}
		\Rq(x_{m})-\lambda_{1}\leq 2\Bigl(1-\frac{1}{2\sqrt{\kappa_{B}}}\Bigr)^{m}\bigl(\Rq(x_{0})-\lambda_{1}\bigr).
	\end{equation}
\end{theorem}
\begin{proof}
	See \cref{appthmPLORAG}.
\end{proof}
\begin{algorithm2e}[htbp]
	\caption{Riemannian Acceleration with Preconditioning (RAP)}
	\label{algoPLORAG}
	\KwIn{Initial $\widetilde{x}_{0}$, matrices $(A,M)$, preconditioner $B$, parameters $\mu$ and $L$.}
	Set $\kappa=\frac{L}{\mu}$, $\beta=\frac{3}{2\sqrt{\kappa}-4}$,  $\alpha=\frac{\sqrt{\beta^{2}+4(1+\beta)\kappa^{-1}}-\beta}{2}$ and  $\gamma=\frac{\alpha\mu}{\alpha+\beta}$\;
	Compute $x_{0}=B^{-1}\widetilde{x}_{0}$ and normalize them by $x_{0}=\frac{x_{0}}{\sqrt{x_{0}^{\Ttran}\widetilde{x}_{0}}}$ and $\widetilde{x}_{0}=\frac{\widetilde{x}_{0}}{\sqrt{x_{0}^{\Ttran}\widetilde{x}_{0}}}$\;
	Set $\widetilde{v}_{0}=\widetilde{x}_{0}$ and $v_{0}=x_{0}$\;
	\For{$m=0,1,2,\dotsc,$}{
		Update $y_{m}$ and $\widetilde{y}_{m}$ by \cref{RAPy}\;
		Update $v_{m+1}$ and $\widetilde{v}_{m+1}$ by \cref{RAPv}\;
		Update $x_{m+1}$ and $\widetilde{x}_{m+1}$ by \cref{RAPx}\;
	}
\end{algorithm2e}

\section{Second order symmetric elliptic eigenvalue problems with Schwarz preconditioners}
In this part, let us turn to eigensolvers based on Schwarz preconditioners \cite[Section~2]{Toselli2005}. We provide an illustration of why $\vartheta$ serves as a superior measurement for preconditioned eigenvalue problems. Recently, Schwarz preconditioners, especially the Domain Decomposition Method (DDM), becomes a popular technique for elliptic eigenvalue problems \cite{Wang2018,Shao2023,Chen2022}, and in our previous work \cite{Chen2022}, an acceleration by involving momentum terms is observed in numerical examples. In this section, we aim to offer a theoretical explanation for this observed phenomenon.

Our discussion in this part will stand on finite elements methods viewpoints, where we are more interested in the relationship with mesh sizes $h$ and $H$.
To maintain conciseness and avoid an excessive proliferation of constants, we introduce the notation $\alpha \lesssim \beta$ to represent the statement that $\alpha\leq c\beta$, where the constant $c$ is positive and independent of mesh sizes $h$ and $H$, and the relative spectral gap as $\gap=\lambda_{2}/(\lambda_{2}-\lambda_{1})$.

\subsection{Theory of Schwarz and domain decomposition methods}
Suppose $\Omega \subset \R^{d}$ for $d=2$ or $3$ is a convex polygonal domain, the matrix $\{a_{ij}(x)\}_{i,\,j=1}^{d}$ is symmetric and uniformly positive definite, and $a_{ij}(x) \in C^{0,1}(\overline{\Omega})$ for $i, j = 1, \dotsc, d$. Let $V_{H} \subset V_{h} \subset H_{0}^{1}(\Omega)$ be the continuous and piecewise linear element spaces based on quasi--uniform triangular partitions $\mathcal{T}_{H}$ and $\mathcal{T}_{h}$, where $\mathcal{T}_{h}$ is a refinement of $\mathcal{T}_{H}$, and $0 < h < H < 1$ are the mesh sizes of $\mathcal{T}_{h}$ and $\mathcal{T}_{H}$, respectively.
We consider the following second-order symmetric elliptic eigenvalue problem:
\begin{equation}
    \label{lapevpfem}
    a(u_{1}, v) = \lambda_{1}\dual{u_{1}, v} \quad \forall\, v \in V_{h}, \quad \norm{u_{1}} = 1 \quad \text{and} \quad u_{1} \in V_{h},
\end{equation}
where $\dual{\cdot, \cdot}$ and $\norm{\cdot}$ are the $L^{2}$ inner-product in $\Omega$ and its associated norm, and
\begin{equation}
    a(u, v) \defi  \sum_{i, j = 1}^{d} \int_{\Omega} a_{ij}(x) \frac{\partial u}{\partial x_{i}} \frac{\partial v}{\partial x_{j}} \de x.
\end{equation}
For any $u \in V_{h}$, let $A^{-1}$ be the global solver by defining $A^{-1}u \in V_{h}$ as 
\begin{equation*}
    a(A^{-1}u, v) = \dual{u, v} \quad \forall\, v \in V_{h}.
\end{equation*}
We are interested in additive Schwarz preconditioners \cite[Section~2]{Toselli2005} for $A^{-1}$, denoted by $B^{-1}$.
Consider a family of subspaces of $V_{h}$ as $\{V_{i}\}_{i=0}^{N}$, assume $V_{h}$ admits the following decomposition:
\begin{equation}
    V_{h}=R_{0}^{\Ttran}V_{0}+\sum_{i=1}^{N}R_{i}^{\Ttran}V_{i},
\end{equation}
where $R_{i}^{\Ttran}\colon V_{i}\mapsto V$ are the interpolation operators for $i=0,\dotsc,d$.
The additive Schwarz preconditioner can be defined as
\begin{equation}
\label{defddmB}
	B^{-1} \defi \sum_{i=0}^{N}R_{i}^{\Ttran}B_{i}^{-1}R_{i},
\end{equation}
where $B_{i}^{-1}$ is the solver on $V_{i}$, and $R_{i}$ is the dual operator of $R_{i}^{\Ttran}$ in $\dual{\cdot,\cdot}$.
To aid in understanding, we present a specific example of additive Schwarz preconditioners, following the structure outlined in \cite[Section~7.4]{Brenner2008}.
This example, two-level overlapping domain decomposition, is also the preconditioner we use in numerical experiments.
\begin{example}[Two-level overlapping domain decomposition preconditioner]
\label{exp_dd}
    Consider a two-dimensional region $\Omega = [0, 1]^2$. Let $\mathcal{T}_{H}$ be a coarse triangulation as shown in \cref{figDD}. The region $\Omega$ is divided into non-overlapping subdomains $\tilde{\Omega}_{j}$ for $1 \leq j \leq 16$, which are aligned with $\mathcal{T}_{H}$. Subsequently, $\mathcal{T}_{H}$ is further subdivided to obtain a finer triangulation $\mathcal{T}_{h}$.

    Define $\Omega_{j} = \tilde{\Omega}_{j, \delta} \cap \overline{\Omega}$, where $\tilde{\Omega}_{j, \delta}$ is an open set obtained by enlarging $\tilde{\Omega}_{j}$ by a band of width $\delta$, ensuring $\Omega_{j}$ is aligned with $\mathcal{T}_{h}$ as shown in \cref{figDD}. Typically, we assume the overlapping ratio $\delta/H$ is bounded below by a constant, which is $0.5$ in this case.

	Let $V_{0}=V_{H}$ be the coarse space, and $V_{j}\subset V_{h}$ denote the space containing functions supported in $\Omega_{j}$ for $1\leq j\leq 16$. 
    Define $B_{0}^{-1}$ and $B_{j}^{-1}$ as a coarse solver and some local solvers as 
    \begin{equation*}
        \begin{aligned}
            a(B_{0}^{-1}u_{H},v_{H})&=\dual{u_{H},v_{H}}\quad \forall\, v_{H} \in V_{H},\\ 
            a(B_{j}^{-1}u_{j},v_{j})&=\dual{u_{j},v_{j}}\quad \forall\, v_{j} \in V_{j}.
        \end{aligned}
    \end{equation*}
    The two-level overlapping domain decomposition preconditioner is given by \cref{defddmB}, where $R_{i}^{\Ttran}$ is just a natural injection operator, \ie $R_{i}^{\Ttran}v_{j}=v_{j}$ for all $v_{j}\in V_{j}$.
    \begin{figure}[htbp]
        \centering
        \includegraphics[width=0.7\textwidth]{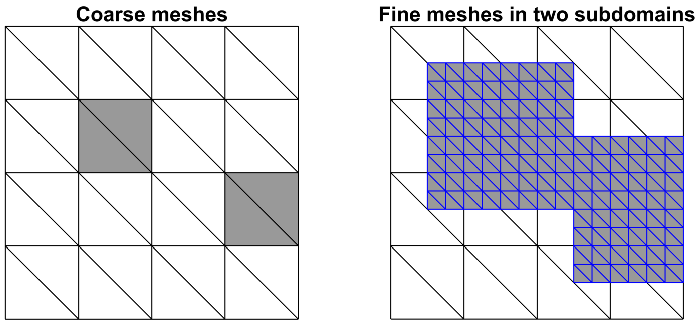}
        \caption{Construction of an overlapping domain decomposition.}
        \label{figDD}
    \end{figure}
\end{example}

For further analysis of additive Schwarz preconditioners, we make some classical assumptions on that.
\begin{Assumption}[{\cite[Assumptions~2.2--2.4]{Toselli2005}}]$\ $
	\label{aspAS}
	\begin{itemize}
		\item Stable decomposition: There exists a constant $C_{s}$ such that every $v\in V_{h}$ admits a decomposition $v = \sum_{i=0}^{N}R_{i}^{\Ttran}v_{i}$
		      that satisfies $\sum_{i=1}^{N}\norm{v_{i}}_{B_{i}}^{2}\leq C_{s}^{2}\norm{v}_{A}^{2}$.
		\item Strengthened Cauchy-Schwarz inequalities: There exist constants $0\leq \epsilon_{ij}\leq 1$, $1\leq i,j\leq N$, such that $(R_{i}^{\Ttran}v_{i},R_{j}^{\Ttran}v_{j})_{A}\leq \epsilon_{ij}\norm{R_{i}^{\Ttran}v_{i}}_{A}\norm{R_{j}^{\Ttran}v_{j}}_{A}$ for $v_{i}\in V_{i}$ and $v_{j}\in V_{j}$.
		\item Finite covering: Assume there are at most $N_{0}$ nonzero elements at each column of $\mathcal{E} = \{\epsilon_{ij}\}_{i,j=1}^{N}$, where $N_{0}$ is independent of $H$ and $h$. Then the spectral radius of $\mathcal{E}$ is bounded by $N_{0}$.
		\item Local stability: There exists $\omega>0$, such that $\norm{R_{i}^{\Ttran}v_{i}}_{A}^{2}\leq \omega \norm{v_{i}}_{B_{i}}^{2}$ holds for all $v_{i}\in V_{i}$ and $0\leq i\leq N$.
	\end{itemize}
\end{Assumption}

With \cref{aspAS}, we can obtain the following results about additive Schwarz preconditioners and finite elements method, whose proof can be found in \cite{Toselli2005,Babuska1989,Brenner2008}.
\begin{proposition}
	\label{kappaAS}
	Under \cref{aspAS}, the smallest and largest eigenvalues of $B^{-1}A$, \ie $\nu_{\min}$ and $\nu_{\max}$, and the condition number $\kappa_{\nu}$ satisfy
	\begin{equation}
		\nu_{\min}\geq 1,\quad \nu_{\max} \leq C_{s}^{2}\omega( N_{0}+1),\quad  \kappa_{\nu}\leq C_{s}^{2}\omega( N_{0}+1).
	\end{equation}
	Specifically, in DDM, $N_{0}\approx 1$, $\omega\approx 1$, and $C_{s}\lesssim 1$ for the two-level overlapping method \cite{Dryja1989} and $C_{s}\lesssim 1+\ln(H/h)$ for the Bramble-Pasciak-Schatz (BPS) method \cite{Bramble1986}.
\end{proposition}

\begin{proposition}
	\label{propAS}
	Let $(\lambda_{0},u_{0})$ be the smallest eigenvalue and its corresponding eigenvector of $B_{0}$, \ie
	\begin{equation*}
		(B_{0}u_{0},v_{0})=\lambda_{0}\dual{u_{0},v_{0}}\quad\forall\, v_{0}\in V_{0},\quad\text{and}\quad \norm{u_{0}}=1.
	\end{equation*}
	Define $C_{c}$, $C_{P}$ and $C_{a}$ as follows:
	\begin{equation*}
		\begin{aligned}
			C_{c}&\defi \min_{\alpha\in\R}\norm{u_{1}-\alpha R_{0}^{\Ttran}u_{0}}_{A},\\ 
			C_{P}&\defi \max_{1\leq i\leq N}\lambda_{\max}(B_{i}^{-1})
		      =\max_{1\leq i\leq N}\lambda_{\min}^{-1}(B_{i}),\\ 
			C_{a} &\defi \max_{v\in V}\frac{\sum_{i=1}^{N}\norm{R_{i}v}^{2}}{\norm{v}^{2}},
		\end{aligned}
	\end{equation*}
	Under \cref{aspAS}, when $H$ is sufficiently small, it holds that 
	\begin{equation*}
		C_{c}\lesssim \lambda_{1}^{1/2}H,\quad 0\leq \lambda_{0}-\lambda_{1}\lesssim \lambda_{1}H^{2},\quad C_{P}\lesssim \lambda_{1}^{-1}H^{2}\quad\text{and}\quad C_{a}\lesssim 1.
	\end{equation*}
\end{proposition}

\subsection{Analysis for leading angle}
Now let us consider the leading angle defined in \cref{defvartheta} in the Schwarz framework.
In order to use \cref{estLA}, we first consider the global term $R_{0}^{\Ttran}B_{0}^{-1}R_{0}u_{1}-\alpha u_{1}$ and the local term $\sum_{i=1}^{N}R_{i}^{\Ttran}B_{i}^{-1}R_{i}u_{1}$, respectively.

\begin{lemma}
	\label{estCoarse}
	Let $u_{1}$ be an eigenvector of \cref{lapevpfem}, under \cref{aspAS},
	\begin{equation}
		\norm{R_{0}^{\Ttran}B_{0}^{-1}R_{0}u_{1}-\lambda_{0}^{-1}u_{1}}_{A}\leq (\omega\lambda_{0}^{-1/2}\lambda_{1}^{-1/2}+\lambda_{0}^{-1})C_{c}\lesssim \lambda_{1}^{-1/2}H.
	\end{equation}
\end{lemma}
\begin{proof}
	For any $\alpha\in\R$, by the triangular inequality and local stability in \cref{aspAS},
	\begin{equation*}
		\begin{aligned}
			\norm{R_{0}^{\Ttran}B_{0}^{-1}R_{0}u_{1}-\lambda_{0}^{-1}u_{1}}_{A}
			 & \leq \norm{R_{0}^{\Ttran}B_{0}^{-1}(R_{0}u_{1}-\alpha u_{0})}_{A}+\lambda_{0}^{-1}\norm{\alpha R_{0}^{\Ttran}u_{0}-u_{1}}_{A} \\
			 & \leq \omega \norm{R_{0}u_{1}-\alpha u_{0}}_{B_{0}^{-1}}+\lambda_{0}^{-1}\norm{\alpha R_{0}^{\Ttran}u_{0}-u_{1}}_{A}           \\
			 & \leq \omega\lambda_{0}^{-1/2}\norm{\alpha u_{0}-R_{0}u_{1}} +\lambda_{0}^{-1}\norm{\alpha R_{0}^{\Ttran}u_{0}-u_{1}}_{A}.
		\end{aligned}
	\end{equation*}
	Note that for any $v_{0}$ and $w_{0}$ in $V_{0}$,
	\begin{equation*}
		\dual{v_{0},w_{0}}=\dual{R_{0}^{\Ttran}v_{0},R_{0}^{\Ttran}w_{0}} =\dual{v_{0},R_{0}R_{0}^{\Ttran}w_{0}}.
	\end{equation*}
	We know $R_{0}R_{0}^{\Ttran}$ is the identity operator on $V_{0}$. For any $v$ in $V_{h}$, since
	\begin{equation*}
		\norm{R_{0}v} = \sup_{v_{0}\in V_{0}}\frac{\dual{R_{0}v,v_{0}}}{\norm{v_{0}}} = \sup_{v_{0}\in V_{0}}\frac{\dual{v,R_{0}^{\Ttran}v_{0}}}{\norm{v_{0}}} \leq \norm{v}\sup_{v_{0}\in V_{0}}\frac{\norm{R_{0}^{\Ttran}v_{0}}}{\norm{v_{0}}} = \norm{v},
	\end{equation*}
	we have
	\begin{equation*}
		\norm{\alpha u_{0}-R_{0}u_{1}}=\norm{\alpha R_{0}R_{0}^{\Ttran}u_{0}-R_{0}u_{1}}\leq \norm{\alpha R_{0}^{\Ttran}u_{0}-u_{1}}\leq \lambda_{1}^{-1/2}\norm{\alpha R_{0}^{\Ttran}u_{0}-u_{1}}_{A}.
	\end{equation*}
	Then the lemma is proved by $C_{c}$ in \cref{propAS}.
\end{proof}

\begin{lemma}
	\label{estLoc}
	Under \cref{aspAS}, let
	\begin{equation}
		B_{L}^{-1}=\sum_{i=1}^{N}R_{i}^{\Ttran}B_{i}^{-1}R_{i},
	\end{equation}
	then for any $u\in V_{h}$ with $\norm{u}=1$,
	\begin{equation}
		\norm{B_{L}^{-1}u}_{A}^{2}\leq \omega N_{0} C_{a}C_{P}\lesssim \lambda_{1}^{-1}H^{2}.
	\end{equation}
\end{lemma}
\begin{proof}
	By the local stability in \cref{aspAS} and $C_{P}$ in \cref{propAS},
	\begin{equation}
		\label{estLoc1}
			\norm{R_{i}^{\Ttran}B_{i}^{-1}R_{i}u}_{A}^{2}
			\leq \omega \norm{R_{i}u}_{B_{i}^{-1}}^{2}
			\leq \omega C_{P} \norm{R_{i}u}^{2}.	
	\end{equation}
	Then the lemma is proved by 
	\begin{equation*}
		\norm{B_{L}^{-1}u}_{A}^{2}
		\leq  N_{0} \sum_{i=1}^{N}\norm{R_{i}^{\Ttran}B_{i}^{-1}R_{i}u}_{A}^{2}
		\leq \omega N_{0} C_{P}  \sum_{i=1}^{N}\norm{R_{i}u}^{2}
		\leq \omega N_{0} C_{a}C_{P},
	\end{equation*}
	where we use the finite covering in \cref{aspAS} and $C_{a}$ in \cref{propAS}.
\end{proof}

Combining \cref{estCoarse,estLoc}, we obtain the bound for $\vartheta$ defined \cref{defvartheta}.

\begin{theorem}
	\label{thmAS}
	With \cref{aspAS}, the leading angle $\vartheta$ in \cref{estLA} can be bounded by
	\begin{equation*}
		\cos\vartheta \lesssim \sqrt{\kappa_{\nu}}\Bigl(\frac{\lambda_{1}^{-1}-\rho^{-1}}{\lambda_{1}^{-1}-\lambda_{2}^{-1}}\Bigr)^{1/2}+H,
	\end{equation*}
	where the condition number $\kappa_{\nu}$ can be bounded by
	\begin{equation}
		\label{kappaDDM}
		\kappa_{\nu} \lesssim \left\{
		\begin{aligned}
			 & 1\quad                          & \text{for two-level overlapping method,} \\
			 & \bigl(1+\ln(H/h)\bigr)^{2}\quad & \text{for BPS method.}
		\end{aligned}
		\right.
	\end{equation}
\end{theorem}
\begin{proof}
	By \cref{thmvartheta}, it is sufficient to show that 
	\begin{equation*}
		\min_{\alpha\in\R }\frac{\norm{B^{-1}Au_{1}-\alpha u_{1}}_{A}}{\sqrt{\lambda_{1}}}\lesssim H.
	\end{equation*}
	Taking $\alpha = \lambda_{1}\lambda_{0}^{-1}$ and using the triangular inequality, we know 
	\begin{equation*}
		\frac{\norm{B^{-1}Au_{1}-\lambda_{1}\lambda_{0}^{-1} u_{1}}_{A}}{\lambda_{1}} = \norm{B^{-1}u_{1}-\lambda_{0}^{-1}u_{1}}_{A}\leq \norm{B_{0}^{-1}u_{1}-\lambda_{0}^{-1}u_{1}}_{A}+\norm{B_{L}^{-1}u_{1}}_{A}.
	\end{equation*}
	Then the theorem is proved by \cref{estCoarse,estLoc}.
\end{proof}

\subsection{Analysis for convergence rate}
In \cref{algoPLORAG}, the initial point $x_{0}$ is computed as $x_{0}=B^{-1}\widetilde{x}_{0}$, where $\widetilde{x}_{0}$ is the input. The following lemma gives an estimation about $\Rq(x_{0})$.
\begin{lemma}
	\label{lemx0}
	Assume the $\widetilde{x}_{0}$ is the eigenvector calculated on the coarse space $V_{0}$, \ie $\widetilde{x}_{0}=R_{0}^{\Ttran}u_{0}$, let $x_{0}=B^{-1}\widetilde{x}_{0}$, then,
	\begin{equation*}
		\Rq(x_{0})-\lambda_{1}\lesssim \lambda_{1}H^{2}.
	\end{equation*}
\end{lemma}
\begin{proof}
	See \cref{applemx0}.
\end{proof}

\begin{theorem}
	Assume the initial point $\widetilde{x}_{0}$ is the eigenvector calculated on the coarse space $V_{0}$, \ie $\widetilde{x}_{0}=R_{0}^{\Ttran}u_{0}$. When $H$ is sufficiently small and satisfying
	\begin{equation}
		\label{Hsuffsmall}
		H\leq \frac{\gap^{3}-\gap^{4}}{\kappa_{\nu}^{4}}, \quad\text{where}\quad \gap = \frac{\lambda_{2}-\lambda_{1}}{\lambda_{2}},
	\end{equation}
	then \cref{algoPLORAG} is monotonic convergence, and the convergence rate is
	\begin{equation*}
		\Rq(x_{m})-\lambda_{1}\leq 2\bigl(1-C(\gap/\kappa_{\nu})^{1/2}\bigr)^{m}\bigl(\Rq(x_{0})-\lambda_{1}\bigr),
	\end{equation*}
	where $C$ is a constant independent of $h$, $H$ and $(\lambda_{2}-\lambda_{1})/\lambda_{2}$.
\end{theorem}
\begin{proof}
	First, let us pick a $\rho$ as
	\begin{equation*}
		\rho=\lambda_{1}+H\kappa_{\nu}^{2}\lambda_{2}\leq \lambda_{1}+\frac{\gap^{2}-\gap^{3}}{\kappa_{\nu}^{2}}(\lambda_{2}-\lambda_{1})< \frac{\lambda_{1}+\lambda_{2}}{2}.
	\end{equation*}
	Then we know that 
	\begin{equation*}
		\varepsilon = \frac{\lambda_{1}^{-1}-\rho^{-1}}{\lambda_{1}^{-1}-\lambda_{2}^{-1}} = \frac{\lambda_{2}(\rho-\lambda_{1})}{\rho(\lambda_{2}-\lambda_{1})} \leq \frac{H\kappa_{\nu}^{2}}{\gap-\gap^{2}}.
	\end{equation*}
	Note that in \cref{thmAS}, we have derived that 
	\begin{equation*}
		\cos\vartheta \lesssim \sqrt{\kappa_{\nu}\varepsilon}+H.
	\end{equation*}
	By the result for condition number in \cref{rmkPEVP}, we know that
	\begin{equation*}
		\kappa_{B} = \frac{L_{B}}{\mu_{B}} 
			\leq \frac{\kappa_{\nu}}{\gap}\Bigl(1+\order\bigl(\kappa_{\nu}(\gap^{-2}\cos^{2}\vartheta+\varepsilon)\bigr)\Bigr)
			\leq \frac{\kappa_{\nu}}{\gap}\Bigl(1+\order\bigl(\frac{H\kappa_{\nu}^{4}}{\gap^{3}-\gap^{4}}\bigr)\Bigr)\lesssim \frac{\kappa_{\nu}}{\gap},
	\end{equation*}
	where \cref{Hsuffsmall} is used in the last inequality.	In this case, by \cref{lemx0},
	\begin{equation*}
		\Rq(x_{0})-\lambda_{1}\lesssim \lambda_{1}H^{2}= \frac{(\rho-\lambda_{1})\lambda_{1}H}{\kappa_{\nu}^{2}\lambda_{2}}= C_{0}H \frac{(\rho-\lambda_{1})}{11(1+2\sqrt{\kappa_{B}})^{2}\kappa_{B}},
	\end{equation*}
	where 
	\begin{equation*}
		C_{0} = \frac{11\lambda_{1}(1+2\sqrt{\kappa_{B}})^{2}\kappa_{B}}{\lambda_{2}\kappa_{\nu}^{2}}\lesssim \frac{1-\gap}{\gap^{2}}.
	\end{equation*}
	Thus, when $H$ is small enough, all assumptions in \cref{thmPLORAG} hold. Then the theorem is obtained by \cref{thmPLORAG}.
\end{proof}
\begin{remark}
	When applying the two-level overlapping or BPS domain decomposition preconditioner, the requirement \cref{Hsuffsmall} is a mild condition due to \cref{kappaDDM}.
\end{remark}
\begin{remark}
	Compared to the Preconditioned Steepest Descent (PSD) method, the rate of convergence is improved
	\begin{equation*}
		\text{from}\quad 1-C_{\text{PSD}}\frac{\lambda_{2}-\lambda_{1}}{\kappa_{\nu}\lambda_{2}}\quad \text{to}\quad 1-C_{\text{RAP}}\Bigl(\frac{\lambda_{2}-\lambda_{1}}{\kappa_{\nu}\lambda_{2}}\Bigr)^{1/2}.
	\end{equation*}
	Such an improvement significantly accelerates the convergence when $\kappa_{\nu}$ is large or the relative spectral gap $(\lambda_{2}-\lambda_{1})/\lambda_{2}$ is small.
\end{remark}

\section{Numerical experiments}
\label{sec:numexp}
In this section, we compare the following eigensolvers\footnote{Scripts to reproduce numerical results are publicly available at \url{https://github.com/nShao678/RAP-code}}:
\begin{itemize}
	\item RAP: Riemannian acceleration with preconditioning,
	\item LOPCG: locally optimal preconditioned conjugate gradient \cite{Knyazev2001},
	\item PSD: preconditioned steepest descent \cite{Neymeyr2012},
	\item Lanczos: generalized Lanczos method without restarting \cite[Algorithm~9.1]{Saad2011}.
\end{itemize}

Our objective is to observe and analyze the acceleration effect achieved through preconditioning and the inclusion of a momentum term when solving the 2D discrete Laplacian eigenvalue problem:
\begin{equation*}
	-\Delta^{h} u_{1} = \lambda_{1} u_{1}
\end{equation*}
within the unit square $\Omega=(0,1)^{2}$ , subject to the Dirichlet boundary condition $u_{1}=0$ on $\partial\Omega$.
The standard finite element method is used for discretization, and a two-level overlapping domain decomposition method, as shown in \cref{exp_dd}, is employed as the preconditioner. The coarse mesh size is set to $H=2^{-2}$, with an overlapping ratio of $2$. A range of fine mesh sizes, from $2^{-3}$ to $2^{-10}$, is considered. 
For the RAP method, the parameters $\mu_{B}=8$ and $L_{B}=50$ are used.
The stopping criterion is $\rho-\lambda_{1} \leq 10^{-6}\lambda_{1}$,
where $\rho$ is the computed Rayleigh quotient, and $\lambda_{1}$ is the smallest eigenvalue of the discrete Laplacian operator.

\begin{table}[htbp]
	\centering
	\caption{Iteration numbers Laplacian eigenvalue problems.}
	\begin{tabular}{|c|c|c|c|c|c|c|c|c|}
		\hline
		$h$   & $2^{-3}$ & $2^{-4}$ & $2^{-5}$ & $2^{-6}$ & $2^{-7}$ & $2^{-8}$ & $2^{-9}$ & $2^{-10}$ \\ \hline
		RAP & $8$     & $8$     & $8$     & $8$     & $8$     & $8$     & $8$     & $8$      \\ \hline
		LOPCG & $8$     & $8$     & $8$     & $8$     & $8$     & $8$     & $8$     & $8$      \\ \hline
		PSD   & $14$     & $14$     & $14$     & $14$     & $14$     & $14$     & $14$     & $14$         \\ \hline
		Lanczos  & $12$     & $24$     & $48$    & $97$    & $193$    & $386$    & $766$   & $1438$    \\ \hline
	\end{tabular}
	\label{tableDDM}
\end{table}

As can be observed in the iteration numbers highlighted in \cref{tableDDM}, several noteworthy observations can be made. In the absence of preconditioning, even the Lanczos method, which is frequently regarded as the most efficient first-order method in terms of iterations (matrix-vector multiplications), exhibits a discernible increase in the number of iterations, growing at a rate of $\order(h^{-1})$ as $h\to 0$. It is evident that preconditioning becomes a crucial aspect in the context of handling large-scale elliptic problems. Methods that incorporate a two-level overlapping domain decomposition preconditioner, such as LOPCG, RAP, and PSD, demonstrate the ability to converge with a relatively limited number of iterations. However, LOPCG and RAP require approximately half the iterations compared to the PSD method. Given that the computational cost per iteration is comparable, momentum-based methods like LOPCG and RAP present a more efficient alternative.

\section{Concluding remarks}
In this paper, we introduced new tools for analyzing eigenvalue problems with preconditioning on Riemannian manifolds and proposed a Riemannian acceleration method with preconditioning. The acceleration of this method is both theoretically proven and numerically validated. Future research will focus on extending this theory to block versions.
\section*{Statements and Declarations}
\begin{itemize}
	\item Acknowledgments: 
	We extend our heartfelt thanks to Zhaojun Bai and Daniel Kressner for their invaluable help. Additionally, we express our gratitude to Foivos Alimisis, Long Chen, Yuhang Liu, Meiyue Shao, Bart Vandereycken, and Xuejun Xu for their insightful discussions, which significantly enriched this work. We would also like to thank anonymous referees for their constructive comments and suggestion of the early version of the manuscript.
	\item Funding: Chen is supported by the National Natural Science Foundation of China (NSFC) 12241101 and 12071090.
	\item Competing interests: Parts of this work were carried out when Shao was at School of Mathematical Scineces, Fudan University. The authors declare to have no competing interests related to this work.
\end{itemize}

\begin{appendices}

	\crefalias{section}{appendix}
	\crefalias{subsection}{appendix}
	\section{Instruments on Riemannian manifolds}
	\label{appRO}
	First, we present some results about sphere, the Riemannian manifold we focus on. These results mainly come from \cite{Boumal2023}.
	\begin{proposition}
		\label{propRM}
		Let $\Sm$ be the unit sphere in $\R^{n}$, with the Riemannian metric induced by the standard Euclidean inner product, the following properties hold.
		\begin{itemize}
			\item For any $x\in\Sm$, the tangent space $T_{x}\Sm$ is
				  \begin{equation*}
					  T_{x}\Sm\defi\{v\in\R^{n}\mid x^{\Ttran}v=0,\,x\in\Sm\}.
				  \end{equation*}
				  And the projection of $\xi\in\R^{n}$ to $T_{x}\Sm$ is
				  \begin{equation*}
					  P_{x}\xi=(I-xx^{\Ttran})\xi.
				  \end{equation*}
			\item The geodesic is a curve $\gamma\colon [0,1]\mapsto \mathcal{M}$ of constant speed and locally minimum length, which plays the role of straight line in Riemannian manifolds. For any $x\in\Sm$ and $v\in T_{x}\Sm$, the geodesic $\gamma$ with initial value $x$ and speed $v$ is
				  \begin{equation*}
					  \gamma(t) = \cos(\norm{v}t)x+\sin(\norm{v}t)\frac{v}{\norm{v}}.
				  \end{equation*}
			\item Let $\exp_{x}\colon T_{x}\mathcal{M}\mapsto \mathcal{M}$ be the exponential map, which is defined as $\exp_{x}(v)=\gamma(1)$, where $\gamma$ is the unique geodesic satisfying $\gamma(0)=x$ and $\gamma^{\prime}(0)=v$. The exponential map on sphere is
				  \begin{equation*}
					  \exp_{x}(v) = \gamma(1) = \cos(\norm{v})x+\sin(\norm{v})\frac{v}{\norm{v}}.
				  \end{equation*}
			\item For a local neighborhood $\mathcal{U}$ of $x$, the exponential map $\exp_{x}$ is a diffeomorphism in $\mathcal{U}$, and its inverse, which is called logarithmic map, can be defined as $\log_{x}=\exp_{x}^{-1}\colon \mathcal{U}\mapsto T_{x}\mathcal{M}$. For $x\neq y\in\Sm$, the logarithmic map is
				  \begin{equation*}
					  \log_{x}(y) = \arccos(x^{\Ttran}y)\frac{P_{x}y}{\norm{P_{x}y}}.
				  \end{equation*}
				  Moreover, the norm $\norm{\log_{x}(y)}$ is the distance between $x$ and $y$ on $\mathcal{M}$.
			\item For any $x$ and $y$ in $\mathcal{M}$, let $\Gamma_{x}^{y}\colon T_{x}\mathcal{M}\mapsto T_{y}\mathcal{M}$ be the parallel transport, which transports tangent vector from $T_{x}\mathcal{M}$ to $T_{y}\mathcal{M}$ along the geodesic $\gamma$, where $\gamma(0)=x$ and $\gamma(1) = y$. On sphere, let $v=\log_{x}(y)$, then
				  \begin{equation*}
					  \Gamma_{x}^{y}(u) = \Bigl(I+\bigl(\cos(t\norm{v})-1\bigr)\frac{vv^{\Ttran}}{\norm{v}^{2}}-\sin(t\norm{v})\frac{xv^{\Ttran}}{\norm{v}}\Bigr)u.
				  \end{equation*}
		\end{itemize}
	\end{proposition}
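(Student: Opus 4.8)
The plan is to treat $\Sm$ throughout as an embedded Riemannian submanifold of $\R^{n}$, so that the metric is the restriction of the standard Euclidean inner product and, by the Gauss formula, the Levi-Civita covariant derivative along a curve is the ordinary Euclidean derivative followed by the orthogonal projection onto the tangent space. With this in hand, each claim is a direct verification. First, differentiating the defining relation $x(s)^{\Ttran}x(s)=1$ along a smooth curve on $\Sm$ gives $x^{\Ttran}\dot x=0$, so $T_{x}\Sm\subseteq\{v\mid x^{\Ttran}v=0\}$, and equality follows from $\dim\Sm=n-1$; for $\xi\in\R^{n}$, the vector $(I-xx^{\Ttran})\xi$ lies in this subspace while $\xi-(I-xx^{\Ttran})\xi=(x^{\Ttran}\xi)x$ is parallel to the Euclidean normal $x$, which identifies $P_{x}\xi=(I-xx^{\Ttran})\xi$.

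Next, by the Gauss formula a curve is a geodesic iff its Euclidean acceleration is everywhere normal to $\Sm$, i.e.\ parallel to the position vector. I would take the candidate $\gamma(t)=\cos(\norm{v}t)x+\sin(\norm{v}t)v/\norm{v}$ and check, using $x^{\Ttran}v=0$, that $\gamma(0)=x$, $\dot\gamma(0)=v$, $\norm{\gamma(t)}=1$, $\norm{\dot\gamma(t)}=\norm{v}$ is constant, and $\ddot\gamma(t)=-\norm{v}^{2}\gamma(t)$ is normal; uniqueness of the geodesic with prescribed initial data then yields the formula, and $\exp_{x}(v)$ is its value at $t=1$. For the logarithmic map, given non-antipodal $x\neq y$ the minimizing geodesic joining them lies in $\spa{x,y}$; writing $y=\exp_{x}(v)=\cos(\norm{v})x+\sin(\norm{v})v/\norm{v}$ and pairing with $x$ gives $\norm{v}=\arccos(x^{\Ttran}y)$, which is also the geodesic distance, while projecting $y$ onto $T_{x}\Sm$ and normalizing with $\norm{P_{x}y}=\sqrt{1-(x^{\Ttran}y)^{2}}$ gives $v=\arccos(x^{\Ttran}y)\,P_{x}y/\norm{P_{x}y}$.

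Finally, again by the Gauss formula a vector field $U$ along $\gamma$ is parallel iff $\dot U(t)$ is normal, i.e.\ $\dot U(t)=c(t)\gamma(t)$ for some scalar function $c$, a linear ODE with a unique solution for each initial value $U(0)=u\in T_{x}\Sm$. I would set $U(t)$ to be the stated operator applied to $u$ and verify $U(0)=u$, $\gamma(t)^{\Ttran}U(t)=0$, and $\dot U(t)\parallel\gamma(t)$ by a short computation with the trigonometric derivatives and $x^{\Ttran}v=0$, so that $\Gamma_{x}^{y}(u)=U(1)$. The only step here that is more than bookkeeping is the appeal to the Gauss/Weingarten formula for embedded submanifolds, which converts the intrinsic Riemannian notions into the explicit Euclidean computations above; once that is granted, every formula in the proposition is a one- or two-line check.
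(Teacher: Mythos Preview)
Your proposal is correct and follows the standard embedded-submanifold route (Gauss formula, then direct verification of each formula). The paper itself does not supply a proof for this proposition at all: it is presented as a review of background material on $\Sm$, with a pointer to the Riemannian geometry and optimization literature for details. So there is nothing to compare against beyond noting that what you sketch is exactly the kind of elementary verification those references contain; your argument would serve perfectly well as a self-contained proof where the paper simply cites.
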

	
	Apart from these basic properties in \cref{propRM}, we also need to handle the distortion of Riemannian manifolds. Suppose $U$ is a $2$-dimensional plane of $T_{x}\mathcal{M}$, the sectional curvature of $\mathcal{M}$ at $x$ associated with the plane $U$, which is denoted as $\kappa(x,U)$, is the Gaussian curvature of $U$ at $x$. 
	
	Since we are studying convex optimization problems, some background in geodesic convexity and gradients is also necessary, primarily drawn from \cite{Zhang2018}.
	
	\begin{definition}[Geodesically convex set]
		\label{defGCS}
		Let $\mathcal{X}\subset\mathcal{M}$. If for any $x,\,y\in\mathcal{X}$, there is a geodesic $\gamma$ with $\gamma(0)=x$ and $\gamma(1)=y$ such that $\gamma(t)\in \mathcal{X}$ for any $0\leq t\leq 1$, then $\mathcal{X}$ is called geodesically convex.
	\end{definition}
	\begin{definition}[Geodesically convex function]
		\label{defGCF}
		Let $f\colon \mathcal{X}\subset\mathcal{M}$, where $\mathcal{X}$ is a geodesically convex set. If for any $x,\,y\in\mathcal{X}$, there is a geodesic $\gamma$ with $\gamma(0)=x$ and $\gamma(1)=y$ such that
		\begin{equation*}
			f\bigl(\gamma(t)\bigr)\leq (1-t)f(x)+tf(y)\quad\forall\, 0\leq t\leq1,
		\end{equation*}
		then $f$ is called geodesically convex.
	\end{definition}
	\begin{definition}[Gradient]
		Let $f\colon \mathcal{M}\mapsto \R$ be a differentiable function, then the gradient of $f$ at $x\in\mathcal{M}$, denoted by $\grad f(x)$ is defined as the unique element of $T_{x}\mathcal{M}$ that satisfies
		\begin{equation*}
			\dual{\grad f(x),\xi} = \mathrm{D}f(x)[\xi]\quad\forall\, \xi\in T_{x}\mathcal{M},
		\end{equation*}
		where $\mathrm{D}f(x)[\xi]$ is the directional derivative of $f$ along $\xi$.
	\end{definition}
	\begin{proposition}
		If $\log_{x}$ is well-defined for any $x\in\mathcal{X}$ and $f$ is a differentiable function, an equivalent definition for a geodesically convex function is that: there exists a constant $\mu>0$ such that
		\begin{equation*}
			f(y)\geq f(x)+\dual{\grad f(x),\log_{x}(y)}+\frac{\mu}{2}\norm{\log_{x}(y)}^{2}\quad\forall\, x,\,y\in\mathcal{X}.
		\end{equation*}
		In this case, we call $f$ as $\mu$-geodesically convex.
	\end{proposition}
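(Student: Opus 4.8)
The plan is to prove the equivalence by transporting the classical Euclidean argument to the manifold, with the logarithm $\log_{x}$ replacing the displacement $y-x$ and a minimizing geodesic between two points serving as the bridge between the chord form and the first-order form. Throughout I use that on $\mathcal{X}$ the map $\log$ is everywhere defined, so that for $x,y\in\mathcal{X}$ there is a minimizing geodesic $\gamma\colon[0,1]\to\mathcal{X}$ with $\gamma(0)=x$, $\gamma(1)=y$ and $\gamma'(0)=\log_{x}(y)$, and that by the chain rule together with the definition of the Riemannian gradient, $\lim_{t\downarrow0}\tfrac1t\bigl(f(\gamma(t))-f(x)\bigr)=\mathrm{D}f(x)[\gamma'(0)]=\dual{\grad f(x),\log_{x}(y)}$. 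For the implication from the (strong) chord inequality to the first-order inequality, I fix $x,y$ and $\gamma$ as above, start from
\begin{equation*}
	f\bigl(\gamma(t)\bigr)\leq (1-t)f(x)+tf(y)-\frac{\mu}{2}t(1-t)\norm{\log_{x}(y)}^{2},
\end{equation*}
subtract $f(x)$, divide by $t>0$, and let $t\downarrow0$; the left side tends to $\dual{\grad f(x),\log_{x}(y)}$ and the right side to $f(y)-f(x)-\tfrac{\mu}{2}\norm{\log_{x}(y)}^{2}$, which gives the claimed bound (with $\mu=0$ if one only assumes plain geodesic convexity).

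For the converse, assume the first-order inequality with constant $\mu$. Fix $x,y\in\mathcal{X}$, a minimizing geodesic $\gamma$ between them, and an interior point $z=\gamma(t)$ with $t\in(0,1)$. The geometric input is that $\log_{z}(x)$ and $\log_{z}(y)$ lie along the same geodesic line through $z$ in opposite directions, with $\norm{\log_{z}(x)}=t\norm{\log_{x}(y)}$ and $\norm{\log_{z}(y)}=(1-t)\norm{\log_{x}(y)}$; hence $(1-t)\log_{z}(x)+t\log_{z}(y)=0$ and $(1-t)\norm{\log_{z}(x)}^{2}+t\norm{\log_{z}(y)}^{2}=t(1-t)\norm{\log_{x}(y)}^{2}$. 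Writing the first-order inequality at $z$ toward $x$ and toward $y$, scaling by $1-t$ and $t$ respectively, and adding, the gradient terms cancel by the first identity and the quadratic terms combine by the second, which leaves exactly the strong chord inequality displayed above; in particular $f$ is geodesically convex.

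The hard part is the pair of geometric identities on the logarithmic maps, namely that on a geodesically convex $\mathcal{X}$ with $\log$ everywhere defined, the logarithms from an interior point of a minimizing geodesic to its two endpoints are anti-parallel with lengths in the ratio $t:(1-t)$. This is where the manifold structure genuinely enters: it rests on uniqueness of the minimizing geodesic joining $x$ and $y$ and on the reparametrization behaviour of $\exp_{z}$ and $\log_{z}$ along that geodesic, whereas the limit computation $\lim_{t\downarrow0}\tfrac1t\bigl(f(\gamma(t))-f(x)\bigr)=\dual{\grad f(x),\gamma'(0)}$ is routine given differentiability of $f$ and smoothness of $\gamma$. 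One bookkeeping remark: plain geodesic convexity corresponds to $\mu=0$ in the first-order inequality, and the strengthening to $\mu>0$ is exactly what the proposition names $\mu$-geodesic convexity, so the argument above establishes the equivalence uniformly in $\mu\geq 0$.
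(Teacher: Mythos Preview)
The paper does not supply a proof of this proposition: it is listed among the background ``Instruments on Riemannian manifold'' in Section~2.3 and stated without argument, with the reader referred to standard texts such as \cite{Lee2018,Klingenberg1995,Boumal2023,Absil2009}. So there is no paper proof to compare against.

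Your argument is the standard one and is correct in outline. The forward direction (chord inequality $\Rightarrow$ first-order inequality) via dividing by $t$ and letting $t\downarrow 0$ is routine. For the converse, your use of the two anti-parallel identities $(1-t)\log_{z}(x)+t\log_{z}(y)=0$ and $(1-t)\norm{\log_{z}(x)}^{2}+t\norm{\log_{z}(y)}^{2}=t(1-t)\norm{\log_{x}(y)}^{2}$ at $z=\gamma(t)$ is exactly right, and you correctly flag that these rest on uniqueness of the minimizing geodesic and the fact that $t\mapsto\gamma(t)$ reparametrizes to geodesics from $z$ to each endpoint. Your closing bookkeeping remark is also apt: as written, the proposition conflates plain geodesic convexity with $\mu$-strong geodesic convexity; the equivalence you prove holds uniformly for $\mu\geq 0$, with the $\mu>0$ case being what the paper calls $\mu$-geodesically convex.
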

	\begin{definition}[Lipschitz smooth function]
		\label{defLS}
		Let $f\colon \mathcal{X}\mapsto \R$ be a differentiable function, assume $\log_{x}$ is well-defined for any $x\in\mathcal{X}$. If there exists a constant $L>0$ such that
		\begin{equation*}
			f(y)\leq f(x)+\dual{\grad f(x),\log_{x}(y)}+\frac{L}{2}\norm{\log_{x}(y)}^{2}\quad\forall\, x,\,y\in\mathcal{X},
		\end{equation*}
		then $f$ is called $L$-Lipschitz smooth.
	\end{definition}
	\begin{proposition}
		If $f$ is $L$-Lipschitz smooth on $\mathcal{X}$, then
		\begin{equation*}
			\norm{\grad f(x)-\Gamma_{y}^{x}\grad f(y)}\leq L\norm{\log_{x}(y)}\quad\forall\, x,\,y\in\mathcal{X}.
		\end{equation*}
	\end{proposition}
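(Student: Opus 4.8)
The plan is to reduce the estimate to a one-dimensional statement along the geodesic joining $x$ and $y$ and then transcribe the classical Euclidean derivation of the gradient–Lipschitz bound from the descent inequality, keeping track of the metric distortion quantified in \cref{propdistort}.

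First I would fix $x,y\in\mathcal{X}$, let $\gamma$ be the minimizing geodesic with $\gamma(0)=x$ and $\gamma(1)=y$, and set $g(t)=f(\gamma(t))$. Since $\gamma$ is a geodesic and parallel transport is a linear isometry, $\log_{x}(\gamma(t))=t\log_{x}(y)$, $\log_{y}(\gamma(t))=(1-t)\log_{y}(x)$, $g'(0)=\dual{\grad f(x),\log_{x}(y)}$, and $g'(1)=\dual{\Gamma_{y}^{x}\grad f(y),\log_{x}(y)}$. Applying the $L$-Lipschitz smoothness inequality once with base point $x$ and endpoint $\gamma(t)$ and once with base point $y$ and endpoint $\gamma(t)$, then evaluating the first at $t=1$, the second at $t=0$, and adding, yields
\begin{equation*}
	\dual{\Gamma_{y}^{x}\grad f(y)-\grad f(x),\log_{x}(y)}\leq L\norm{\log_{x}(y)}^{2};
\end{equation*}
together with the matching lower bound coming from the geodesic convexity available in our setting (equivalently, from the two-sided form of the smoothness estimate), this controls the component of $\grad f(x)-\Gamma_{y}^{x}\grad f(y)$ along $\log_{x}(y)$.

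To promote this directional estimate to the full norm I would run the auxiliary-function argument: for fixed $x$ set $\phi(z)=f(z)-\dual{\grad f(x),\log_{x}(z)}$, so that $\grad\phi(x)=0$ and, up to the distortion of $\mathrm{D}_{z}\log_{x}$, $\grad\phi(z)=\grad f(z)-(\mathrm{D}_{z}\log_{x})^{*}\grad f(x)$. The function $\phi$ is again $L$-Lipschitz smooth and geodesically convex near $x$ with $x$ as a global minimizer, so one gradient step from $y$ gives $0\leq\phi(x)\leq\phi(y)-\tfrac{1}{2L}\norm{\grad\phi(y)}^{2}$, hence $\norm{\grad\phi(y)}^{2}\leq 2L\bigl(\phi(y)-\phi(x)\bigr)\leq L^{2}\norm{\log_{x}(y)}^{2}$, the last step being the smoothness bound $\phi(y)-\phi(x)\leq\tfrac{L}{2}\norm{\log_{x}(y)}^{2}$. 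Symmetrizing in $x$ and $y$ and applying the Cauchy--Schwarz inequality would then produce $\norm{\grad f(x)-\Gamma_{y}^{x}\grad f(y)}\leq L\norm{\log_{x}(y)}$.

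The main obstacle is that $\grad\phi(y)$ is not literally $\grad f(y)-\Gamma_{x}^{y}\grad f(x)$: the two differ by the adjoint of the differential of $z\mapsto\log_{x}(z)$ applied to $\grad f(x)$, which is exactly the manifold's metric distortion. I would control this with \cref{propdistort}: on a set of sectional curvature in $[-K,K]$ the relevant distortion factor is $\eta=\sqrt{1+2K\norm{\log_{x}(y)}^{2}}=1+\order(\norm{\log_{x}(y)}^{2})$, so it does not affect the leading-order Lipschitz constant, and on the sphere the explicit parallel-transport formula in \cref{propRM} makes everything exact. Carefully bookkeeping these $\eta$-factors is the only delicate point; the remainder is the verbatim Euclidean computation.
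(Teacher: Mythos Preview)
The paper does not prove this proposition: it is listed among the background facts in the ``Instruments on Riemannian manifold'' subsection, with no argument supplied.  So there is nothing to compare your attempt against; what remains is whether your argument stands on its own.

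It does not, and the difficulty is exactly the one you flag and then try to dismiss.  Your auxiliary function $\phi(z)=f(z)-\dual{\grad f(x),\log_{x}(z)}$ carries the Euclidean proof because in $\R^{n}$ the subtracted term is affine: its Hessian vanishes, so $\phi$ inherits $L$-smoothness and convexity from $f$, and $\nabla\phi(z)=\nabla f(z)-\nabla f(x)$ exactly.  On a curved manifold neither fact survives.  The map $z\mapsto\dual{\grad f(x),\log_{x}(z)}$ has a nonzero Riemannian Hessian governed by the curvature (through the Jacobi equation for $\mathrm{D}_{z}\log_{x}$), so you have no reason to claim that $\phi$ is $L$-Lipschitz smooth or geodesically convex with the same constants; both claims are used in your key inequality $\norm{\grad\phi(y)}^{2}\leq 2L(\phi(y)-\phi(x))\leq L^{2}\norm{\log_{x}(y)}^{2}$.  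And even if those held, $\grad\phi(y)=\grad f(y)-(\mathrm{D}_{y}\log_{x})^{*}\grad f(x)$ differs from $\grad f(y)-\Gamma_{x}^{y}\grad f(x)$ by a curvature term of size $O(K\norm{\log_{x}(y)}^{2})\norm{\grad f(x)}$.  Absorbing this into the distortion factor $\eta=1+O(K\norm{\log_{x}(y)}^{2})$ from \cref{propdistort} gives at best $L\eta\norm{\log_{x}(y)}$, not $L\norm{\log_{x}(y)}$; for the exact-constant statement that is a genuine loss, not a negligible one.

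More broadly, the implication ``descent inequality with constant $L$ $\Rightarrow$ gradient Lipschitz with the same constant $L$'' is not known to hold on general manifolds of nonzero curvature; the usual route in Riemannian optimization is to take the gradient-Lipschitz condition as the definition, or to derive both from a Hessian bound as in \cref{AppSOC}.  In this paper every concrete smoothness estimate (\cref{lemcvxfun,lemcvxfunP}) goes through the Hessian, so the single place the gradient-Lipschitz form is invoked (in the proof of \cref{thmLORAG}) is justified.  If you want a self-contained proof, argue via the Hessian bound: parallel-transport $\grad f$ along the geodesic and integrate $\nabla_{\gamma'}\grad f=\nabla^{2}f[\gamma']$ to get $\norm{\Gamma_{y}^{x}\grad f(y)-\grad f(x)}\leq\int_{0}^{1}\norm{\nabla^{2}f(\gamma(t))[\gamma'(t)]}\,\mathrm{d}t\leq L\norm{\log_{x}(y)}$.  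That is short and exact; your route cannot be repaired to give the sharp constant.
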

	\begin{proposition}[Second-order condition]
		\label{AppSOC}
		Let $f\colon \mathcal{X}\mapsto \R$ be a smooth function, assume $\log_{x}$ is well-defined for any $x\in\mathcal{X}$. The function $f$ is $\mu$-geodesically convex and $L$-Lipschitz smooth. if there exist constants $0<\mu<L$ such that for any $v\in T_{x}\mathcal{X}$,
		\begin{equation*}
			\mu\norm{v}^{2}\leq \dual{v,\nabla^{2}f(x)[v]} \leq L\norm{v}^{2}
		\end{equation*}
		holds, where $\nabla^{2}f(x)$ is the Hessian of $f$ at $x$ defined as
		\begin{equation*}
			\nabla^{2}f(x)[v] = \nabla_{v}\grad f(x),
		\end{equation*}
		and $\nabla_{v}$ is the Riemannian connection.
	\end{proposition}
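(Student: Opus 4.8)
The plan is to reduce this multivariate statement to a one–dimensional Taylor expansion along geodesics. Fix $x,\,y\in\mathcal{X}$ and let $\gamma\colon[0,1]\mapsto\mathcal{X}$ be the geodesic with $\gamma(0)=x$ and $\gamma(1)=y$, which exists, stays in $\mathcal{X}$, and satisfies $\gamma^{\prime}(0)=\log_{x}(y)$ under the standing assumption that $\log_{x}$ is well-defined on $\mathcal{X}$. Define the scalar function $g(t)=f(\gamma(t))$, which is smooth since $f$ and $\gamma$ are. First I would compute $g^{\prime}(t)=\dual{\grad f(\gamma(t)),\gamma^{\prime}(t)}$ directly from the definition of the Riemannian gradient, and then differentiate once more using metric compatibility of the Levi-Civita connection: $g^{\prime\prime}(t)=\dual{\nabla_{\gamma^{\prime}(t)}\grad f(\gamma(t)),\gamma^{\prime}(t)}+\dual{\grad f(\gamma(t)),\nabla_{\gamma^{\prime}(t)}\gamma^{\prime}(t)}$. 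Because $\gamma$ is a geodesic, $\nabla_{\gamma^{\prime}}\gamma^{\prime}=0$, so the second term vanishes and $g^{\prime\prime}(t)=\dual{\gamma^{\prime}(t),\nabla^{2}f(\gamma(t))[\gamma^{\prime}(t)]}$ by the definition of the Hessian in the statement.

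Next I would invoke the hypothesis at the point $\gamma(t)$ with the tangent vector $\gamma^{\prime}(t)\in T_{\gamma(t)}\mathcal{X}$ to obtain $\mu\norm{\gamma^{\prime}(t)}^{2}\leq g^{\prime\prime}(t)\leq L\norm{\gamma^{\prime}(t)}^{2}$ for all $t\in[0,1]$. Since a geodesic has constant speed, $\norm{\gamma^{\prime}(t)}=\norm{\gamma^{\prime}(0)}=\norm{\log_{x}(y)}$, so the lower and upper bounds on $g^{\prime\prime}$ are in fact constant in $t$, equal to $\mu\norm{\log_{x}(y)}^{2}$ and $L\norm{\log_{x}(y)}^{2}$ respectively.

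Finally I would apply the integral form of Taylor's theorem to $g$, namely $g(1)=g(0)+g^{\prime}(0)+\int_{0}^{1}(1-t)g^{\prime\prime}(t)\de t$. Substituting $g(0)=f(x)$, $g(1)=f(y)$, $g^{\prime}(0)=\dual{\grad f(x),\log_{x}(y)}$, the two-sided bound on $g^{\prime\prime}$, and $\int_{0}^{1}(1-t)\de t=1/2$ yields
\[
f(x)+\dual{\grad f(x),\log_{x}(y)}+\frac{\mu}{2}\norm{\log_{x}(y)}^{2}\leq f(y)\leq f(x)+\dual{\grad f(x),\log_{x}(y)}+\frac{L}{2}\norm{\log_{x}(y)}^{2}
\]
for every $x,\,y\in\mathcal{X}$, which is exactly $\mu$-geodesic convexity (left inequality) and $L$-Lipschitz smoothness (right inequality).

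The only delicate point is the second-derivative computation in the first step: one must take $g^{\prime\prime}(t)$ to be a genuine covariant derivative rather than a naive coordinate derivative, and it is precisely the geodesic equation $\nabla_{\gamma^{\prime}}\gamma^{\prime}=0$ that cancels the extra term, so that the quadratic form appearing is the intrinsic Hessian from the hypothesis. Everything after that is a routine scalar estimate.
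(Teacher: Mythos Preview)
Your argument is correct and is exactly the standard proof of this fact: restrict to a geodesic, use metric compatibility of the Levi--Civita connection together with the geodesic equation $\nabla_{\gamma'}\gamma'=0$ to identify $g''(t)$ with the Hessian quadratic form, exploit constant speed, and finish with Taylor's formula with integral remainder. The one implicit assumption you rely on---that the geodesic from $x$ to $y$ remains in $\mathcal{X}$---is justified in the paper's setting, where $\mathcal{X}$ is taken to be geodesically convex (and indeed the paper's definitions of $\mu$-geodesic convexity and $L$-Lipschitz smoothness only make sense under that hypothesis).

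As for comparison: the paper does not supply its own proof of this proposition. It is stated in the subsection on Riemannian background as a standard result, with the reader referred to the Riemannian optimization literature (Boumal, Absil--Mahony--Sepulchre) for details. Your write-up is precisely the argument one finds there, so there is nothing to contrast.
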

	
	At the end of this part, we provide some expressions for Riemannian Hessian of Rayleigh quotient. 
	\begin{proposition}
		\label{appHess}
		Let $\Rq(x)$ be the Rayleigh quotient defined in \cref{defRQ}, then for any $v\in T_{x}\Sm$ with $\norm{v}=1$,
		\begin{equation*}
			\dual{v,\nabla^{2}\Rq(x)[v]} = \frac{2\norm{v}_{A}^{2}\Rq(x)}{\norm{x}_{A}^{2}}\Bigl(1-\frac{\Rq(x)}{\Rq(v)}\Bigr)+\frac{8(v^{\Ttran}Mx)\cdot v^{\Ttran}\bigl(Ax-\Rq(x)Mx\bigr)}{\norm{x}_{M}^{4}}.
		\end{equation*}
		Specifically, if $M=I$, then the Riemannian Hessian can be simplified as 
		\begin{equation*}
			\dual{v,\nabla^{2}\Rq(x)[v]} = 2\bigl(\Rq(v)-\Rq(x)\bigr).
		\end{equation*}
	\end{proposition}
	\begin{proof}
		By definition and direct computation, for any $v\in T_{x}\Sm$,
		\begin{equation}
			\label{gradRq}
			\begin{aligned}
				\dual{\grad \Rq(x),v} &= \mathrm{D}\Rq(x)[v] = \lim_{\epsilon\to0}\frac{1}{\epsilon}\Bigl(\Rq(x+\epsilon v)-\Rq(x)\Bigr) \\ 
				&= \lim_{\epsilon\to0}\frac{1}{\epsilon}\Bigl(\frac{x^{\Ttran}Ax+2\epsilon x^{\Ttran}Av}{x^{\Ttran}Mx+2\epsilon x^{\Ttran}Mv}-\frac{x^{\Ttran}Ax}{x^{\Ttran}Mx}\Bigr)\\
				&= \lim_{\epsilon\to0}\frac{\Rq(x)}{\epsilon}\Bigl(\bigl(1+2\epsilon\frac{x^{\Ttran}Av}{x^{\Ttran}Ax}\bigr)\bigl(1-2\epsilon\frac{x^{\Ttran}Mv}{x^{\Ttran}Mx}\bigr)-1\Bigr)\\ 
				&= \frac{2v^{\Ttran}\bigl(A-\Rq(x)M\bigr)x}{x^{\Ttran}Mx}. 
			\end{aligned}
		\end{equation}
		For the Hessian, note that $\Sm$ is a Riemannian submanifold of $\R^{n}$, an Euclidean space. By \cite[Corollary~5.16]{Boumal2023},
		\begin{equation*}
			\nabla^{2}\Rq(x)[v] = \nabla_{v}\grad \Rq(x) = P_{x}\mathrm{D} \overline{G}(x)[v],
		\end{equation*}
		where $P_{x}=I-xx^{\Ttran}$ is a projection and 
		\begin{equation*}
			\overline{G}(x) = \frac{2}{x^{\Ttran}Mx}\bigl(Ax-\Rq(x)Mx\bigr) 
		\end{equation*}
		is a smooth extension of $\grad \Rq(x)$ in $\R^{n}$. Note that $P_{x}v=v$ since $v\in T_{x}\Sm$. With some calculation similar to \cref{gradRq},
		\begin{equation*}
			\begin{aligned}
				\dual{v,\nabla^{2}\Rq(x)[v]}&=v^{\Ttran}
				\mathrm{D} \overline{G}(x)[v]
				= \lim_{\epsilon\to0}\frac{1}{\epsilon}v^{\Ttran}\Bigl(\overline{G}(x+\epsilon v)-\overline{G}(x)\Bigr)\\ 
			&=\frac{2}{x^{\Ttran}Mx}\Bigl(v^{\Ttran}\bigl(A-\Rq(x)M\bigr)v-v^{\Ttran}\bigl(Mx\overline{G}^{\Ttran}(x)+\overline{G}(x)(Mx)^{\Ttran}\bigr)v\Bigr)\\ 
			&=J_{1}-J_{2},
			\end{aligned}
		\end{equation*}
		where 
		\begin{equation*}
			J_{1} = 2\frac{v^{\Ttran}(A-\Rq(x)M)v}{x^{\Ttran}Mx} = \frac{2\norm{v}_{A}^{2}\Rq(x)}{\norm{x}_{A}^{2}}\Bigl(1-\frac{\Rq(x)}{\Rq(v)}\Bigr)
		\end{equation*}
		and 
		\begin{equation*}
			\begin{aligned}
				J_{2} &= \frac{2}{x^{\Ttran}Mx}\Bigl(v^{\Ttran}\bigl(Mx\overline{G}^{\Ttran}(x)+\overline{G}(x)(Mx)^{\Ttran}\bigr)v\Bigr)\\ 
				&= \frac{8(x^{\Ttran}Mv)\cdot v^{\Ttran}\bigl(Ax-\Rq(x)Mx\bigr)}{\norm{x}_{M}^{4}}.
			\end{aligned}
		\end{equation*}
	\end{proof}
	
	\section{Proof of some theoretical results}
	\label{appTS}
	\subsection{Proof of \cref{lemcvxset}}
		\label{applemcvxset}
		Since the hemisphere $\Sm_{+}=\{x\in\Sm,\,x^{\Ttran}u_{M}>0\}$ is geodesically convex, it is sufficient to prove for any $x$ and $y$ in $\mathcal{X}_{M}$ and the geodesic $\gamma$ satisfying $\gamma(0)=x$ and $\gamma(1)=y$, the inequality
		\begin{equation}
			\label{cvxset0}
			\norm{\gamma(t)}_{A_{M}}^{2}\leq \rho
		\end{equation}
		holds for all $0\leq t\leq 1$.
		Note that for any $x\in\Sm$, if $x$ is orthogonal to $u_{M}$, then $x^{\Ttran}A_{M}x\geq \lambda_{2}>\rho$, which means $\mathcal{X}_{M}$ is a closed subset of the hemisphere $\Sm_{+}$. If \cref{cvxset0} does not hold for some $x$ and $y$ in $\mathcal{X}_{M}$, since $\mathcal{X}_{M}$ is closed, we may assume
		\begin{equation}
			\label{cvxset}
			x^{\Ttran}A_{M}x=\rho,\quad \gamma(0)=x,\quad\gamma(1)=y,\quad\norm{\gamma(t)}_{A_{M}}^{2}>\rho\quad\text{for all }0<t<1.
		\end{equation}
		Let $v=\log_{x}(y)$ be the logarithmic map and
		\begin{equation*}
			\phi(t)=\norm{\gamma(t)}_{A_{M}}^{2}-\rho=\Bigl(\frac{v^{\Ttran}A_{M}v}{v^{\Ttran}v}-\rho\Bigr)\sin^{2}(\norm{v}t)+\frac{x^{\Ttran}A_{M}v}{\norm{v}}\sin(2\norm{v}t).
		\end{equation*}
		According to \cref{cvxset}, we have
		\begin{equation}
			\label{cvxset2}
			\phi^{\prime}(0)=2x^{\Ttran}A_{M}v \geq 0.
		\end{equation}
		Using \cref{estRqMv}, when $\lambda_{1}\leq \rho< (\lambda_{1}+\lambda_{2})/2$, we obtain
		\begin{equation}
			\label{cvxset3}
			\frac{v^{\Ttran}A_{M}v}{v^{\Ttran}v}\geq \lambda_{1}+\lambda_{2}-\rho\geq\rho\quad\text{and}\quad
			\cos(\norm{v})=x^{\Ttran}y\geq 1-\frac{2(\rho-\lambda_{1})}{\lambda_{2}-\lambda_{1}}> 0,
		\end{equation}
		where we use \cite[Lemma~2.2]{Shao2023a} for the lower bound of $\abs{x^{\Ttran}y}$. 
		Since $\norm{v}$ is the length of geodesic, which is a minor arc, when $x$ and $y$ are different points, we know $0<\norm{v}<\pi/2$. Combining \cref{cvxset2,cvxset3}, we have
		\begin{equation*}
			\begin{aligned}
				0 & \geq \phi(1)=\Bigl(\frac{v^{\Ttran}A_{M}v}{v^{\Ttran}v}-\rho\Bigr)\sin^{2}(\norm{v})+\frac{x^{\Ttran}A_{M}v}{\norm{v}}\sin(2\norm{v})   \geq 0,
			\end{aligned}
		\end{equation*}
		where the equalities hold when
		\begin{equation*}
			\frac{v^{\Ttran}A_{M}v}{v^{\Ttran}v}=\rho
			\quad\text{and}\quad
			x^{\Ttran}A_{M}v=0.
		\end{equation*}
		In this situation, $\phi(t)=0$ for all $t\in[0,1]$, a contradiction to \cref{cvxset}.
	\subsection{Proof of \cref{lemcvxsetP}}
	\label{applemcvxsetP}
		According to \cref{lemcvxset}, we know
		\begin{equation}
			\begin{aligned}
				\mathcal{X} & = \{x^{\Ttran}M_{B}x=1,\, x^{\Ttran}u_{B}>0\mid \frac{x^{\Ttran}A_{B}x}{x^{\Ttran}M_{B}x}\leq\rho\} \\
							& = \{x^{\Ttran}M_{B}x=1,\, x^{\Ttran}u_{B}>0\mid x^{\Ttran}A_{B}x\leq\rho\},
			\end{aligned}
		\end{equation}
		is geodesically convex. Let $\psi\colon \mathcal{X}_{B}\mapsto \mathcal{X}$ be the scaling transformation defined as
		\begin{equation*}
			\psi(x) = \frac{x}{\norm{x}_{M_{B}}}.
		\end{equation*}
		For any $x_{1}$ and $x_{2}$ in $\mathcal{X}_{B}$, and geodesic $\gamma_{B}(t)\in\mathcal{X}_{B}$ satisfying $\gamma_{B}(0)=x_{1}$ and $\gamma_{B}(1)=x_{2}$, there exists a geodesic $\gamma(t)\in\mathcal{X}$ such that $\gamma(0)=\psi(x_{1})$ and $\gamma(1)=\psi(x_{2})$. Note that for any $t\in[0,1]$, there exists a unique $t_{B}\in[0,1]$ such that $\psi\bigl(\gamma(t)\bigr)=\gamma_{B}(t)$.
		Thus, $\mathcal{X}$ is geodesically convex leads to $\mathcal{X}_{B}$ is geodesically convex.
		\subsection{Proof of \cref{eqvAB}}
	\label{appeqvAB}
	
		The direction from right to left is trivial. For the other one, we first show that for any $x\neq\zero$ satisfying $\Rq(x)\leq \rho$, the matrix $[Ax,Bx]$ is rank-one. 
		
		Otherwise, let $V\in\R^{n\times (n-1)}$ be a basis of the $B$-orthogonal complement of $x$. It is clear that $V^{\Ttran}Bx=\zero$ and $V^{\Ttran}Ax=\zero$, where we use $\vartheta=\pi/2$. Then we know the matrix $[V,Ax,Bx]$ is column full rank, which is a contradiction.
	
		Now we know $Au_{1}=\mu Bu_{1}$ since $\Rq(u_{1})=\lambda_{1}<\rho$, and need to show that $A=\mu B$.
		For any $2\leq k\leq n$ and $0< t_{k}^{2}<(\rho-\lambda_{1})/(\lambda_{k}-\rho)$, let $x_{k}=u_{1}+t_{k}u_{k}$. Note that 
		\begin{equation*}
			\Rq(x_{k}) = \frac{\lambda_{1}+t_{k}^{2}\lambda_{k}}{1+t_{k}^{2}}<\rho.
		\end{equation*}
		We know $Ax_{k}=\mu_{k}(t_{k})Bx_{k}$ for some $\mu_{k}(t_{k})>0$.  Since $Au_{1}=\mu Bu_{1}$, substituting $x_{k}=u_{1}+t_{k}u_{k}$ into $Ax_{k}=\mu_{k}(t_{k})Bx_{k}$, we have 
		\begin{equation*}
			\Bigl(1-\frac{\mu_{k}(t_{k})}{\mu}\Bigr)Au_{1}+t_{k}Au_{k}=t_{k}\mu_{k}(t_{k})Bu_{k}.
		\end{equation*} 
		Multiplying $u_{k}^{\Ttran}$ from the left side we find that  
		\begin{equation*}
			\mu_{k}(t_{k}) = \frac{u_{k}^{\Ttran}Au_{k}}{u_{k}^{\Ttran}Bu_{k}}\quad\forall\, 0<t_{k}^{2}<(\rho-\lambda_{1})/(\lambda_{k}-\rho),
		\end{equation*}
		which implies $\mu_{k}(t_{k})$ is a constant independent of $t_{k}$. Note that 
		\begin{equation*}
			\mu_{k}(t_{k}) = \frac{x_{k}^{\Ttran}Ax_{k}}{x_{k}^{\Ttran}Bx_{k}} = \frac{\lambda_{1}+t_{k}^{2}\lambda_{k}}{u_{1}^{\Ttran}Bu_{1}+2t_{k}u_{1}^{\Ttran}Bu_{k}+t_{k}^{2}u_{k}^{\Ttran}Bu_{k}}
		\end{equation*}
		is a rational function. We know that $\mu_{k}(t)$ is a constant, specifically, $\mu_{k}(t_{k})=\mu_{k}(0)=\mu$. Thus, we know that $Au_{k}=\mu Bu_{k}$ for all $k$. Then the proof is finished since $[u_{1},\dotsc,u_{n}]$ is a basis of $\R^{n}$.

	\subsection{Proof of \cref{estu1}}
	\label{appestu1}
	
		First, we claim that, for any $x$, there exist a scaled decomposition
		\begin{equation}
			\label{estu1x0}
			\alpha x= \widehat{u}_{1}\cos\theta+v\sin\theta,
		\end{equation}
		where $\widehat{u}_{1}$ is an $A$-unit eigenvector of $(A,M)$ associated with $\lambda_{1}$, $v$ is $A$-unit and $B$-orthogonal to  $\widehat{u}_{1}$, $\alpha$ is a scalar and $0\leq\theta\leq \pi/2$. Consider the $B$-orthogonal decomposition of $x$ as
		\begin{equation*}
			x = \widehat{u}_{1}(\widehat{u}_{1}^{\Ttran}Bx)+\Bigl(I-\frac{\widehat{u}_{1}\widehat{u}_{1}^{\Ttran}B}{\widehat{u}_{1}^{\Ttran}B\widehat{u}_{1}}\Bigr)x.
		\end{equation*}
		Let
		\begin{equation*}
			z =\Bigl(I-\frac{\widehat{u}_{1}\widehat{u}_{1}^{\Ttran}B}{\widehat{u}_{1}^{\Ttran}B\widehat{u}_{1}}\Bigr)x,
			\quad  v  = z/\norm{z}_{A},
			\quad  \theta = \arctan \Bigl(\frac{\norm{z}_{A}}{\abs{\widehat{u}_{1}^{\Ttran}Bx}}\Bigr)
			\quad\text{and}\quad  \alpha = \frac{\sin\theta}{\norm{z}_{A}}.
		\end{equation*}
		It is easy to verify that the decomposition \cref{estu1x0} holds. Since $\varsigma_{\min}$, $\varsigma_{\max}$ and the Rayleigh quotient $\Rq(x)$ are homogeneous of $x$, we can rescale $x$ in \cref{estu1x0} as
		\begin{equation}
			\label{estu1x}
			x= \widehat{u}_{1}\cos\theta+v\sin\theta.
		\end{equation}
		
		Now we can estimate $\varsigma_{\min}$ and $\varsigma_{\max}$. Due to the collinearity of $u_{1}$ and $\widehat{u}_{1}$, we know 
		\begin{equation}
			\label{decompsigma}
			\frac{x^{\Ttran}Ax}{x^{\Ttran}Bx} = \frac{x^{\Ttran}Ax}{\widehat{u}_{1}^{\Ttran}A\widehat{u}_{1}}\cdot \frac{\widehat{u}_{1}^{\Ttran}B\widehat{u}_{1}}{x^{\Ttran}Bx}\cdot\sigma.
		\end{equation}
		It is sufficient to bound the first two terms.
		By the decomposition \cref{estu1x}, we have
		\begin{equation*}
			\frac{x^{\Ttran}Ax}{\widehat{u}_{1}^{\Ttran}A\widehat{u}_{1}} = 1+2\cos\theta\sin\theta\cdot \widehat{u}_{1}^{\Ttran}Av.
		\end{equation*}
		Note that $\abs{\widehat{u}_{1}^{\Ttran}Av}\leq \cos\vartheta$. We obtain bounds for the first term in \cref{decompsigma} as 
		\begin{equation}
			\label{estuAx}
			1-2\cos\theta\sin\theta\cos\vartheta
			\leq \frac{x^{\Ttran}Ax}{\widehat{u}_{1}^{\Ttran}A\widehat{u}_{1}}
			\leq 1+2\cos\theta\sin\theta\cos\vartheta.
		\end{equation}
		For the second term in \cref{decompsigma}, similarly, according to the decomposition \cref{estu1x}, we have
		\begin{equation*}
			\frac{\widehat{u}_{1}^{\Ttran}B\widehat{u}_{1}}{x^{\Ttran}Bx}
			=\frac{\widehat{u}_{1}^{\Ttran}B\widehat{u}_{1}}{\widehat{u}_{1}^{\Ttran}B\widehat{u}_{1}\cos^{2}\theta+v^{\Ttran}Bv\sin^{2}\theta}
			=\frac{\widehat{u}_{1}^{\Ttran}B\widehat{u}_{1}}{\widehat{u}_{1}^{\Ttran}B\widehat{u}_{1}+(v^{\Ttran}Bv-\widehat{u}_{1}^{\Ttran}B\widehat{u}_{1})\sin^{2}\theta}.
		\end{equation*}
		Since $\norm{\widehat{u}_{1}}_{A}=\norm{v}_{A}=1$, we know 
		\begin{equation*}
			\frac{v^{\Ttran}Bv}{\widehat{u}_{1}^{\Ttran}B\widehat{u}_{1}} =
			\frac{v^{\Ttran}Bv}{v^{\Ttran}Av}\cdot 
			\frac{\widehat{u}_{1}^{\Ttran}A\widehat{u}_{1}}{\widehat{u}_{1}^{\Ttran}B\widehat{u}_{1}} = \sigma \frac{v^{\Ttran}Bv}{v^{\Ttran}Av}. 
		\end{equation*}
		Combining these two equations above with \cref{defnu}, we know that 
		\begin{equation}
			\label{estuBx}
			\frac{1}{1+(\sigma\nu_{\min}^{-1}-1)\sin^{2}\theta}\leq \frac{\widehat{u}_{1}^{\Ttran}B\widehat{u}_{1}}{x^{\Ttran}Bx}\leq \frac{1}{1-(1-\sigma\nu_{\max}^{-1})\sin^{2}\theta}.
		\end{equation}
		Combining \cref{decompsigma,estuAx,estuBx}, we provide bounds for $\varsigma_{\min}$ and $\varsigma_{\max}$ as 
		\begin{equation}
			\label{estsigma}
			\frac{\sigma(1-2\cos\theta\sin\theta\cos\vartheta)}{1+(\sigma\nu_{\min}^{-1}-1)\sin^{2}\theta}\leq \varsigma_{\min}\leq   
			\varsigma_{\max}\leq \frac{\sigma(1+2\cos\theta\sin\theta\cos\vartheta)}{1-(1-\sigma\nu_{\max}^{-1})\sin^{2}\theta}. 
		\end{equation}
	
		The last step is to provide an upper bound of $\sin\theta$ for all $x$ satisfying $\Rq(x)\leq \rho$. Expanding $x^{\Ttran}Mx$ with \cref{estu1x} we obtain
		\begin{equation*}
			\frac{\cos^{2}\theta}{\lambda_{1}}+\frac{2\cos\theta\sin\theta\cos\vartheta}{\lambda_{1}}+\frac{\sin^{2}\theta}{\varrho}
			\geq x^{\Ttran}Mx
			\geq \frac{\norm{x}_{A}^{2}}{\rho}
			\geq \frac{1}{\rho}(1-2\cos\theta\sin\theta\cos\vartheta),
		\end{equation*}
		where the first and last inequalities comes from the definition of $\vartheta=\vartheta(B,\rho;\Rq)$ and $\varrho=\varrho(B,\rho;\Rq)$.
		Rearranging inequalities above with trigonometric formulas, we have
		\begin{equation*}
			\rho^{-1}-\frac{\lambda_{1}^{-1}+\varrho^{-1}}{2}-\frac{(\lambda_{1}^{-1}-\varrho^{-1})\cos2\theta}{2}
			\leq (\lambda_{1}^{-1}+\rho^{-1})\cos\vartheta\sin2\theta.
		\end{equation*}
		Using trigonometric formulas and the inequality above, we know
		\begin{equation*}
			\begin{aligned}
				(\lambda_{1}^{-1}-\lambda_{2}^{-1})\sin^{2}\theta=        & \rho^{-1}-\frac{\lambda_{1}^{-1}+\varrho^{-1}}{2}
				-\frac{(\lambda_{1}^{-1}-\varrho^{-1})\cos2\theta}{2}\\ 
				&+(\lambda_{1}^{-1}-\rho^{-1})+(\varrho^{-1}-\lambda_{2}^{-1})\sin^{2}\theta                                                                      \\
				\leq     & (\lambda_{1}^{-1}+\rho^{-1})\cos\vartheta\sin2\theta + (\lambda_{1}^{-1}-\rho^{-1})+(\varrho^{-1}-\lambda_{2}^{-1})\sin^{2}\theta.
			\end{aligned}
		\end{equation*}
		Rearranging the inequality above, we obtain a quadratic inequality of $\sin\theta$ as 
		\begin{equation*}
			\begin{aligned}
				(\lambda_{1}^{-1}-\varrho^{-1})\sin^{2}\theta-2(\lambda_{1}^{-1}+\rho^{-1})\cos\vartheta\cos\theta\sin\theta-(\lambda_{1}^{-1}-\rho^{-1})\leq 0.
			\end{aligned}
		\end{equation*}
		Solving the quadratic inequality we obtain 
		\begin{equation*}
			\begin{aligned}
				\sin\theta &\leq \frac{2(\lambda_{1}^{-1}+\rho^{-1})\cos\vartheta\cos\theta+\bigl((\lambda_{1}^{-1}-\varrho^{-1})(\lambda_{1}^{-1}-\rho^{-1})\bigr)^{1/2}}{\lambda_{1}^{-1}-\varrho^{-1}}\\ 
				&\leq  \frac{2(\lambda_{1}^{-1}+\rho^{-1})}{\lambda_{1}^{-1}-\varrho^{-1}}\cos\vartheta+\Bigl(\frac{\lambda_{1}^{-1}-\rho^{-1}}{\lambda_{1}^{-1}-\varrho^{-1}}\Bigr)^{1/2}\\ 
				&\leq \frac{2(\lambda_{1}^{-1}+\rho^{-1})\cos\vartheta}{(\lambda_{1}^{-1}-\lambda_{2}^{-1})\bigl(1-(\cos\vartheta+\sqrt{\varepsilon})^{2}\bigr)}
				+\Bigl(\frac{\varepsilon}{1-(\cos\vartheta+\sqrt{\varepsilon})^{2}}\Bigr)^{1/2}\\ 
				&\leq \frac{8\lambda_{2}\cos\vartheta}{\lambda_{2}-\lambda_{1}}+\sqrt{2\varepsilon}=\sqrt{\varepsilon_{*}},
			\end{aligned}
		\end{equation*}
		where we use \cref{defvarrho} in the third inequality, and $(\cos\vartheta+\sqrt{\varepsilon})^{2}\leq \varepsilon_{*}\leq 1/2$ in the last inequality. Substituting $\sin\theta\leq \sqrt{\varepsilon_{*}}$ into \cref{estsigma}, we finish the proof.
	\subsection{Proof of \cref{rmkPEVP}}
	\label{apprmkPEVP}
	Before approximate $\mu_{B}$ and $L_{B}$. We compute the approximation of $\varsigma_{\min}$ and $\varsigma_{\max}$ first. By \cref{estu1}, we know that 
	\begin{equation*}
		\begin{aligned}
			\varsigma_{\min}/\sigma &\geq 1-\order\bigl(\sqrt{\varepsilon_{*}}\cos\vartheta+(\sigma\nu_{\min}^{-1}-1)\varepsilon_{*}\bigr) 
			\geq 1-\order\bigl(\sigma\nu_{\min}^{-1}(g^{-2}\cos^{2}\vartheta+\varepsilon)\bigr),\\ 
			\varsigma_{\max}/\sigma &\leq 1+\order\bigl(\sqrt{\varepsilon_{*}}\cos\vartheta+(1-\sigma\nu_{\max}^{-1})\varepsilon_{*}\bigr)
			\leq 1+\order(g^{-2}\cos^{2}\vartheta+\varepsilon).
		\end{aligned}
	\end{equation*}
	Now, let us turn to $\mu_{B}$:
	\begin{equation*}
			\mu_{B} = \frac{2\nu_{\min}\lambda_{1}}{\sigma}\Bigl(1-\frac{\lambda_{1}}{\lambda_{2}}\Bigr)
			\Bigl(\frac{\sigma(1-\frac{\rho}{\varrho})}{\varsigma_{\max}(1-\frac{\lambda_{1}}{\lambda_{2}})}-
			\frac{\sigma C_{\mathcal{X}}}{2\nu_{\min}\lambda_{1}g}\Bigr),	
	\end{equation*}
	where 
	\begin{equation*}
		C_{\mathcal{X}}  = \frac{8\nu_{\max}\rho}{\varsigma_{\min}}
				\biggl(
				\frac{\rho-\lambda_{1}}{\lambda_{1}}
				+\Bigl(\frac{\rho-\lambda_{1}}{\lambda_{1}}\Bigr)^{1/2}\cos\vartheta
				\biggr).
	\end{equation*}
	For the term without $C_{\mathcal{X}}$, by \cref{defvarrho}, we have 
	\begin{equation*}
		\begin{aligned}
			\frac{\sigma(1-\frac{\rho}{\varrho})}{\varsigma_{\max}(1-\frac{\lambda_{1}}{\lambda_{2}})} 
		&= \frac{\sigma}{\varsigma_{\max}}
		\Bigl(1-\frac{(\rho-\lambda_{1})\lambda_{2}+(\lambda_{2}-\varrho)\lambda_{1}}{\varrho(\lambda_{2}-\lambda_{1})}\Bigr)
		\geq 1-\order(g^{-2}\cos^{2}\vartheta+\varepsilon).
		\end{aligned}
	\end{equation*}
	For the term with $C_{\mathcal{X}}$, we have 
	\begin{equation*}
		\frac{\sigma C_{\mathcal{X}}}{2\nu_{\min}\lambda_{1}g} = \frac{8\sigma\kappa_{\nu}\rho }{2\varsigma_{\min}\lambda_{1}g}\biggl(
			\frac{\rho-\lambda_{1}}{\lambda_{1}}
			+\Bigl(\frac{\rho-\lambda_{1}}{\lambda_{1}}\Bigr)^{1/2}\cos\vartheta
			\biggr)\leq \order\bigl(\kappa_{\nu}(\varepsilon+\sqrt{\varepsilon/g}\cos\vartheta)\bigr).
	\end{equation*}
	Combining these two inequalities above, we have 
	\begin{equation*}
		\mu_{B} \geq  \frac{2\nu_{\min}\lambda_{1}}{\sigma}\Bigl(1-\frac{\lambda_{1}}{\lambda_{2}}\Bigr)\Bigl(1-\order\bigl(\kappa_{\nu}(g^{-2}\cos^{2}\vartheta+\varepsilon)\bigr)\Bigr).
	\end{equation*}
	
	For the term $L_{B}$:
	\begin{equation*}
		L_{B} = \frac{2\nu_{\max}\lambda_{1}}{\sigma}\Bigl(1-\frac{\lambda_{1}}{\lambda_{n}}\Bigr)\Bigl(\frac{\rho\sigma}{\lambda_{1}\varsigma_{\min}}+\frac{\sigma\lambda_{n} C_{\mathcal{X}}}{2\nu_{\max}\lambda_{1}(\lambda_{n}-\lambda_{1})}\Bigr),
	\end{equation*}
	with almost same operations, we know 
	\begin{equation*}
		L_{B} \leq  \frac{2\nu_{\max}\lambda_{1}}{\sigma}\Bigl(1-\frac{\lambda_{1}}{\lambda_{n}}\Bigr)\Bigl(1+\order\bigl(\sigma\nu_{\min}^{-1}(g^{-2}\cos^{2}\vartheta+\varepsilon)\bigr)\Bigr).
	\end{equation*}
	
	Thus, the condition number after preconditioning is 
	\begin{equation*}
		\kappa_{B} = \frac{L_{B}}{\mu_{B}} \leq \kappa_{\nu}\frac{1-\lambda_{1}/\lambda_{n}}{1-\lambda_{1}/\lambda_{2}}\Bigl(1-\order\bigl(\kappa_{\nu}(g^{-2}\cos^{2}\vartheta+\varepsilon)\bigr)\Bigr).
	\end{equation*}
	
	\subsection{Proof of \cref{propestSeq}}
	\label{apppropestSeq}
	The proof \cref{propestSeq} highly relies on \cite{Zhang2018}.
	First, we show $(\alpha,\phi_{i}(x))$ is a weak estimate sequence, which is based on \cite[Lemma~4]{Zhang2018}. Note that in \cite[Lemma~4]{Zhang2018}, the only requirement for $\{y_{i}\}$ is that $\{y_{i}\}\subset \mathcal{X}$, which is satisfied in \cref{propestSeq}. Thus, by taking all $\alpha_{i}$ in \cite[Lemma~4]{Zhang2018} as $\alpha$ and $\gamma_{0}=\gamma$, we know that $(\alpha,\phi_{i}(x))$ is a weak estimate sequence since we use the same update formula for $v_{i}$ as \cite[Algorithm~1]{Zhang2018}, which depends only on $y_{i}$. 
	
	Then we show that $f(x_{i+1})\leq \phi_{i+1}^{*}$, which relies on \cite[Lemma~6]{Zhang2018}. In the proof of \cite[Lemma~6]{Zhang2018}, they show that 
	\begin{equation*}
		f(y_{i})-\frac{h}{2}\norm{\grad f(y_{i})}^{2}\leq \phi_{i+1}^{*}.
	\end{equation*}
	Thus, our choice of $x_{i+1}$ in \cref{algo} satisfies
	\begin{equation*}
		f(x_{i+1})\leq f(y_{i})-\frac{h}{2}\norm{\grad f(y_{i})}^{2}\leq \phi_{i+1}^{*}.
	\end{equation*}
	
	Next, we show that $\phi_{i+1}(x_{*})-f(x_{*}) \leq (1-\alpha) \bigl(\phi_{i}(x_{*})-f(x_{*})\bigr)$. This uses the fact proved in \cite[Lemma~4]{Zhang2018} as
	\begin{equation*}
		\phi_{i+1}(x_{*})\leq \overline{\phi}_{i+1}(x_{*})=(1-\alpha)\phi_{i}(x_{*})+\alpha \bigl(f(y_{i})+\dual{\grad f(y_{i}),\log_{y_{i}}(x_{*})}+\frac{\mu}{2}\norm{\log_{y_{i}}(x_{*})}^{2}\bigr).
	\end{equation*}
	By the geodesic convexity of $f$, we know that 
	\begin{equation*}
		\phi_{i+1}(x_{*})\leq (1-\alpha)\phi_{i}(x_{*})+\alpha f(x_{*}) = (1-\alpha)\bigl(\phi_{i}(x_{*})-f(x_{*})\bigr)+f(x_{*}),
	\end{equation*}
	which is the desired result.
	
	At last, we show the convergence result \cref{mainprop}. By last two items we proved above, we know that 
	\begin{equation*}
		f(x_{i})-f(x_{*})\leq \phi_{i}^{*}-f(x_{*}) \leq \phi_{i}(x_{*})-f(x_{*})\leq (1-\alpha)^{i}\bigl(\phi_{0}(x_{*})-f(x_{*})\bigr).
	\end{equation*}
	The \cref{mainprop} is proved by the definition of $\phi_{0}$.
	\subsection{Proof of \cref{thmLORAG}}
	\label{appthmLORAG}
	
	Before proving \cref{thmLORAG}, we first state a distortion result for the Riemannian manifolds with bounded sectional curvature. 
	\begin{proposition}[{\cite[Theorem~10]{Zhang2018}}]
		\label{propdistort}
		For any three points $x$, $y$ and $z\in \mathcal{X}$, where $\mathcal{X}$ is a geodesically convex set with sectional curvature bounded in $[-K,K]$. Let
		\begin{equation*}
			\eta = \sqrt{1+2K\norm{\log_{x}(y)}^{2}},
		\end{equation*}
		then we have
		\begin{equation*}
			\begin{aligned}
				\norm{\log_{y}(x)-\log_{y}(z)}
				\leq \eta\norm{\log_{x}(z)}
				\leq \eta^{2} \norm{\log_{y}(x)-\log_{y}(z)}.
			\end{aligned}
		\end{equation*}
	\end{proposition}
	
	Combining \cref{propdistort} with the assumption $(1+KR_{1}^{2})^{2}\leq 1+\beta$ in \cref{thmLORAG}, we know that \cref{distortion} can be proved by $y_{i},\,y_{i+1}\in\mathcal{B}_{R_{1}}$. Moreover, we also obtain a triangular-type inequality as 
	\begin{equation}
		\label{conDistort}
		\norm{\log_{x_{*}}(z)}
		\leq \sqrt{1+\beta} \norm{\log_{y}(x_{*})-\log_{y}(z)},
	\end{equation}
	for any $y\in\mathcal{B}_{R_{1}}$ and $z\in\mathcal{M}$. 
	
	Now we can go back to the proof.
		By \cref{lemxk}, we know $f(x_{m})\leq f(x_{0})$ for all $m\geq 0$, then $x_{m}\in\mathcal{B}_{R_{1}}$ comes directly from \cref{invarset}. Once $v_{m}\in\mathcal{B}_{R_{1}}$ is proved, we obtain $y_{m}\in\mathcal{B}_{R_{1}}$ directly since $y_{m}$ is in the geodesic from $x_{m}$ to $v_{m}$ and $\mathcal{B}_{R_{1}}$ is geodesically convex.
	
		Now, we only need to prove $v_{m}\in\mathcal{B}_{R_{1}}$ for all $m\geq 0$. The proof is based on a mathematical induction. The case $m=0$ is straight from $v_{0}=x_{0}\in\mathcal{B}_{R_{1}}$. Assume $v_{i}\in\mathcal{B}_{R_{1}}$ for all $0\leq i\leq m$.
	
		First, we need to prove
		\begin{equation}
			\label{suffDecrease}
			f(x_{i+1})\leq f(y_{i})-\frac{1}{2L}\norm{\grad f(y_{i})}^{2}
		\end{equation}
		holds for all $0\leq i\leq m$. Since $y_{i}\in\mathcal{B}_{R_{1}}\subset \mathcal{X}$ and $\nabla f(x_{*})=0$, we know that
		\begin{equation}
			\label{normGrady}
			\norm{\grad f(y_{i})} = \norm{\grad f(y_{i})-\Gamma_{x_{*}}^{y_{i}}\grad f(x_{*})} \leq L\norm{\log_{x_{*}}(y_{i})}\leq LR_{1},
		\end{equation}
		where $\Gamma_{x_{*}}^{y_{i}}$ is the parallel transport. Let 
		\begin{equation*}
			\overline{y}_{i} = \exp_{y_{i}}\Bigl(-\frac{1}{L}\grad f(y_{i})\Bigr).
		\end{equation*}
		By the triangular inequality, we know
		\begin{equation*}
			\norm{\log_{x_{*}}(\overline{y}_{i})} = \norm{\log_{x_{*}}(y_{i})+\Gamma_{y_{i}}^{x_{*}}\log_{y_{i}}(\overline{y}_{i})}\leq \norm{\log_{x_{*}}(y_{i})}+\frac{1}{L}\norm{\grad f(y_{i})}\leq 2R_{1}\leq R_{2},
		\end{equation*}
		which implies $\overline{y}_{i}\in\mathcal{B}_{R_{2}}\subset \mathcal{X}$.
		Using the $L$-Lipschitz smoothness of $f$ in $\mathcal{X}$, we obtain 
		\begin{equation*}
			f(\overline{y}_{i})\leq f(y_{i})-\frac{1}{2L}\norm{\grad f(y_{i})}^{2}.
		\end{equation*}
		Then \cref{suffDecrease} is proved by the locally optimal step in \cref{algo}, \ie $f(x_{i+1})\leq f(\overline{y}_{i})$.
	
		Then we will show
		\begin{equation}
			\label{vinBR2}
			v_{m+1}\in\mathcal{B}_{R_{2}}\subset \mathcal{X}.
		\end{equation}
		From the recurrence of $v_{m+1}$ in \cref{algo} and the triangular inequality, we know
		\begin{equation*}
			\norm{\log_{x_{*}}(v_{m+1})} = \norm{\log_{x_{*}}(\overline{v}_{m})+\Gamma_{\overline{v}_{m}}^{x_{*}}\log_{\overline{v}_{m}}^{v_{m+1}}}\leq \norm{\log_{x_{*}}(\overline{v}_{m})}+\frac{\alpha}{\overline{\gamma}}\norm{\grad f(y_{m})},
		\end{equation*}
		where
		\begin{equation*}
			\overline{v}_{m} = \exp_{y_{m}}\Bigl(\frac{1-\alpha}{1+\beta}\log_{y_{m}}(v_{m})\Bigr)
		\end{equation*}
		is on the geodesic from $y_{m}$ to $v_{m}$. Note that $y_{m}$ and $v_{m}$ are both in $\mathcal{B}_{R_{1}}$, a geodesically convex set, we know that $\overline{v}_{m}$ is also in $\mathcal{B}_{R_{1}}$. Combining it with $\norm{\grad f(y_{m})}\leq LR_{1}$ in \cref{normGrady} and the triangular inequality, we prove \cref{vinBR2} by
		\begin{equation*}
			\norm{\log_{x_{*}}(v_{m+1})} \leq R_{1}+\frac{\alpha L}{\overline{\gamma}}R_{1} = \Bigl(1+\frac{\alpha L}{(1+\beta)\gamma}\Bigr)R_{1}\leq R_{2}.
		\end{equation*}
	
		The last step is to show that 
		\begin{equation}
			\label{vinBR1}
			v_{m+1}\in\mathcal{B}_{R_{1}}.
		\end{equation}
		We need to use the convergence established in \cref{propestSeq}. First, by \cref{suffDecrease}, we know \cref{algo} is a special case of \cref{GRAGD}. Then, we have shown that $x_{m+1}\in\mathcal{B}_{R_{1}}$, $v_{m+1}\in\mathcal{B}_{2}$, $y_{m}\in\mathcal{B}_{R_{1}}$ and $y_{m+1}\in\mathcal{B}_{R_{2}}$, which indicates that all iterates are in $\mathcal{X}$. At last, the distortion \cref{distortion} is handled by \cref{propdistort}. Using the convergence result in \cref{propestSeq}, we have 
		\begin{equation*}
			\phi_{m+1}(x_{*})-f(x_{*})\leq (1-\alpha)^{m+1}E_{0}\leq E_{0}.
		\end{equation*}
		Note that $\phi_{m+1}^{*}\geq f(x_{m+1})\geq f(x_{*})$ and
		\begin{equation*}
			\phi_{m+1}(x_{*})-f(x_{*}) = \phi_{m+1}^{*}-f(x_{*})+\frac{\overline{\gamma}}{2}\norm{\log_{y_{m}}(x_{*})-\log_{y_{m}}(v_{m+1})}^{2}.
		\end{equation*}
		We can prove \cref{vinBR1} by 
		\begin{equation*}
			\begin{aligned}
				\norm{\log_{x_{*}}(v_{m+1})}^{2}&\leq (1+\beta)\norm{\log_{y_{m}}(x_{*})-\log_{y_{m}}(v_{m+1})}^{2} \\ 
				&\leq \frac{2(1+\beta)}{\overline{\gamma}}\bigl(\phi_{m+1}(x_{*})-f(x_{*})\bigr) 
				\leq  \frac{2E_{0}}{\gamma}\leq R_{1}^{2},
			\end{aligned}
		\end{equation*}
		where \cref{conDistort} is used in the first inequality.
	\subsection{Proof of \cref{rmkcoef}}
	\label{apprmkcoef}
	It is clear that when $\kappa\geq 9$, all parameter, \ie $\alpha$, $\beta$ and $\gamma$, are positive. Note that 
	\begin{equation*}
		\frac{\alpha L}{(1+\beta)\gamma} = \frac{\dfrac{L}{2\sqrt{\kappa}}}{\dfrac{2\sqrt{\kappa}-1}{2\sqrt{\kappa}-4}\cdot\dfrac{(\sqrt{\kappa}-2)\mu}{2(2\sqrt{\kappa}-1)}} = 2\sqrt{\kappa}.
	\end{equation*}
	We know that 
	\begin{equation*}
		R_{2} = \Bigl(1+\frac{\alpha L}{(1+\beta)\gamma}\Bigr)R_{1} = (1+2\sqrt{\kappa})R_{1}\geq 2R_{1},
	\end{equation*}
	which is consistent with the condition in \cref{thmLORAG}.
	
	Now let us turn to upper bounds for $R_{1}$ and $R_{2}$. For $R_{1}^{2}$, by the geodesic convexity of $f$, we have
	\begin{equation*}
		E_{0} = f(x_{0})-f(x_{*})+\frac{\gamma}{2}\norm{x_{0}-x_{*}}^{2}\leq \frac{\mu+\gamma}{\mu}\bigl(f(x_{0})-f(x_{*})\bigr).
	\end{equation*}
	By $0<\alpha\leq 1/6$, we know $4< \mu/\gamma\leq 10$ and
	\begin{equation*}
		R_{1}^{2}\leq \frac{2E_{0}}{\gamma}= \frac{2(\mu+\gamma)}{\gamma\mu}\bigl(f(x_{0})-f(x_{*})\bigr) \leq \frac{22}{\mu}\bigl(f(x_{0})-f(x_{*})\bigr).
	\end{equation*}
	The estimation for $R_{2}$ comes from $R_{2}=(1+2\sqrt{\kappa})R_{1}$.
	
	\subsection{Proof of \cref{thmPLORAG}}
	\label{appthmPLORAG}
	It is sufficient to verify conditions in \cref{rmkcoef}. Using the $L_{B}$-Lipschitz smooth of $f$, we know that $\mathcal{B}_{R_{2}}\subset \mathcal{X}$ holds when
		\begin{equation*}
			\frac{L_{B}R_{2}^{2}}{2}\leq \rho-\lambda_{1}.
		\end{equation*}
		According to \cref{rmkcoef}, it is enough to show
		\begin{align}
			\label{thmR2}
			f(x_{0})-f(x_{*}) & \leq \frac{\mu_{B}}{22}\bigl(\sqrt{1+\beta}-1\bigr),                               \\
			\label{thmR3}
			f(x_{0})-f(x_{*}) & \leq \frac{(\rho-\lambda_{1})}{11(1+2\sqrt{\kappa_{B}})^{2}\kappa_{B}}.
		\end{align}
		Note that \cref{thmR3} is same as the condition \cref{conX0} since $f(x_{*})=\lambda_{1}$. We need to show that \cref{conX0} implies \cref{thmR2}, \ie 
		\begin{equation*}
			\frac{(\rho-\lambda_{1})}{11(1+2\sqrt{\kappa_{B}})^{2}\kappa_{B}}\leq \frac{\mu_{B}}{22}\bigl(\sqrt{1+\beta}-1\bigr).           
		\end{equation*}
		Since $0<\beta\leq 3/2$, we obtain that
		\begin{equation*}
			\sqrt{1+\beta}-1\geq \frac{\beta}{3} = \frac{1}{2\sqrt{\kappa_{B}}-4}\geq \frac{1}{2\sqrt{\kappa_{B}}}.
		\end{equation*}
		Combining it with $\kappa_{B}=L_{B}/\mu_{B}$, it is sufficient to show
		\begin{equation*}
			\frac{\mu_{B}(\rho-\lambda_{1})}{(1+2\sqrt{\kappa_{B}})^{2}L_{B}} \leq \frac{\mu_{B}}{4\sqrt{\kappa_{B}}}.
		\end{equation*}
		According to \cref{defLmu}, we have
		\begin{equation*}
			L_{B} = \frac{2\nu_{\max}\rho}{\varsigma_{\min}}\Bigl(1-\frac{\lambda_{1}}{\lambda_{n}}\Bigr)+C_{\mathcal{X}}
			\geq 2\rho\Bigl(1-\frac{\lambda_{1}}{\rho}\Bigr)
			\geq 2(\rho-\lambda_{1}).
		\end{equation*}
		Then the theorem is proved by $\kappa_{B}\geq 9$.
	
		\subsection{Proof of \cref{lemx0}}
		\label{applemx0}
		By the estimation of eigenvector, we know
		\begin{equation*}
			\Rq(\widetilde{x}_{0})-\lambda_{1}=\lambda_{0}-\lambda_{1}\lesssim \lambda_{1}H^{2}.
		\end{equation*}
		According to $x_{0}=B^{-1}\widetilde{x}_{0}$, we have
		\begin{equation*}
			x_{0}=\lambda_{0}^{-1}\widetilde{x}_{0}+B_{L}^{-1}\widetilde{x}_{0},
		\end{equation*}
		where $B_{L}$ is defined in \cref{estLoc}. Then, by \cref{estLoc},
		\begin{equation}
			\label{estx0hat1}
			\rho_{0}=\Rq(x_{0})
			=\frac{\norm{\lambda_{0}^{-1}\widetilde{x}_{0}+B_{L}^{-1}\widetilde{x}_{0}}_{A}^{2}}
			{\norm{\lambda_{0}^{-1}\widetilde{x}_{0}+B_{L}^{-1}\widetilde{x}_{0}}^{2}}
			= \frac{\lambda_{0}+\lambda_{0}\dual{\widetilde{x}_{0},B_{L}^{-1}\widetilde{x}_{0}}_{A}+\lambda_{0}^{2}\norm{B_{L}^{-1}\widetilde{x_{0}}}_{A}^{2}}
			{1+\lambda_{0}\dual{\widetilde{x}_{0},B_{L}^{-1}\widetilde{x}_{0}}+\lambda_{0}^{2}\norm{B_{L}^{-1}\widetilde{x}_{0}}^{2}}.
		\end{equation}
		Due to the local stability in \cref{aspAS}, we know
		\begin{equation*}
			\dual{\widetilde{x}_{0},B_{L}^{-1}\widetilde{x}_{0}}=\sum_{i=1}^{N}\dual{R_{i}\widetilde{x}_{0},B_{i}^{-1}R_{i}\widetilde{x}_{0}}\geq \frac{1}{\omega}\sum_{i=1}^{N}\norm{R_{i}^{\Ttran}B_{i}^{-1}R_{i}\widetilde{x}_{0}}_{A}^{2}\geq0.
		\end{equation*}
		Thus, the denominator in \cref{estx0hat1} is greater than or equal to $1$. For the numerator, we need to show that 
		\begin{equation}
			\label{estx0hat2}
			\dual{\widetilde{x}_{0},B_{L}^{-1}\widetilde{x}_{0}}_{A}\lesssim H^{2}\quad\text{and}\quad \norm{B_{L}^{-1}\widetilde{x_{0}}}_{A}\lesssim \lambda_{0}^{-1/2}H.
		\end{equation}
		The second inequality has already been proved in \cref{estLoc}. For the first one, let 
		\begin{equation*}
			r_{0} = \alpha_{0} \widetilde{x}_{0}-u_{1} = \alpha_{0} R_{0}^{\Ttran}u_{0}-u_{1},
		\end{equation*}
		where
		\begin{equation*}
			\alpha_{0} = \argmin_{\alpha\in\R}\norm{u_{1}-\alpha R_{0}^{\Ttran}u_{0}}_{A} = \frac{(R_{0}^{\Ttran}u_{0},u_{1})_{A}}{\lambda_{0}}\approx 1.
		\end{equation*}
		Using the Cauchy--Schwarz inequality, we have
		\begin{equation}
			\label{xbx}
			\begin{aligned}
				\alpha_{0}^{2} (\widetilde{x}_{0},B_{L}^{-1}\widetilde{x}_{0})_{A}
				 & =  (r_{0},B_{L}^{-1}r_{0})_{A}+2(u_{1},B_{L}^{-1}r_{0})_{A}+(u_{1},B_{L}^{-1}u_{1})_{A}                                   \\
				 & \leq \norm{r_{0}}_{A}\norm{B_{L}^{-1}r_{0}}_{A}+2\sqrt{\lambda_{1}}\norm{B_{L}^{-1}r_{0}}_{A}+(u_{1},B_{L}^{-1}u_{1})_{A}.
			\end{aligned}
		\end{equation}
		By the Poincar\'e inequality,  $C_{c}$ in \cref{propAS} and \cref{estLoc},  we know that 
		\begin{equation*}
			\norm{B_{L}^{-1}r_{0}}_{A}\lesssim \lambda_{1}^{-1/2}H \norm{r_{0}}\leq \lambda_{1}^{-1}H \norm{r_{0}}_{A} \lesssim \lambda_{1}^{-1/2}H^{2}.
		\end{equation*}
		Thus, the first two terms in \cref{xbx} are $\order(H^{2})$.
		For the other term, similar to \cref{estLoc1},
		\begin{equation*}
			(u_{1},B_{L}^{-1}u_{1})_{A} = \lambda_{1}(u_{1},B_{L}^{-1}u_{1}) = \lambda_{1}\sum_{i=1}^{N}(R_{i}u_{1},B_{i}^{-1}R_{i}u_{1})\lesssim H^{2}\sum_{i=1}^{N}\norm{R_{i}u_{1}}^{2}\lesssim H^{2},
		\end{equation*}
		where we use $C_{P}$ in \cref{propAS}. Thus, we obtain \cref{estx0hat2}. The lemma is proved by combining \cref{estx0hat1,estx0hat2}.

\end{appendices}

\end{document}